\newlist{longenum}{enumerate}{5}
\setlist[longenum,1]{label=\roman*)}
\setlist[longenum,2]{label=\alph*)}
\DeclareMathAlphabet{\pazocal}{OMS}{zplm}{m}{n}
\tikzset{
    >=stealth',
    pil/.style={
           ->,
           thick,
           shorten <=2pt,
           shorten >=2pt,}
}
\tikzset{->-/.style={decoration={
  markings,
  mark=at position .7 with {\arrow{>}}},postaction={decorate}}}
\tikzset{-->-/.style={decoration={
  markings,
  mark=at position .75 with {\arrow{>}}},postaction={decorate}}}
  \tikzset{->--/.style={decoration={
  markings,
  mark=at position .3 with {\arrow{>}}},postaction={decorate}}}
  \tikzset{a/.style={decoration={
  markings,
  mark=at position -1.5pt with {\arrow{>}}},shorten >=2pt, postaction={decorate}}}
\tikzset{-<-/.style={decoration={
  markings,
  mark=at position .4 with {\arrow{<}}},postaction={decorate}}}  
\tikzstyle{vertex} = [coordinate]
\newtheorem{lemma}{Lemma}[section]
\newtheorem{proposition}[lemma]{Proposition}
\newtheorem{remark-definition}[lemma]{Remark-Definition}
\newtheorem{theorem}[lemma]{Theorem}
\newtheorem{corollary}[lemma]{Corollary}
\newtheorem{proposition-conjecture}[lemma]{Proposition-conjecture}
\newtheorem{problem}[lemma]{Problem}
\theoremstyle{definition}
\newtheorem{example}[lemma]{Example}
\newtheorem{definition}[lemma]{Definition}
\newtheorem{remark}[lemma]{Remark}
\begin{document}
\newcommand{\eps}{{\varepsilon}}
\newcommand{\proofend}{\hfill$\Box$\bigskip}
\newcommand{\C}{{\mathbb C}}
\newcommand{\Q}{{\mathbb Q}}
\newcommand{\R}{{\mathbb R}}
\newcommand{\Z}{{\mathbb Z}}
\newcommand{\RP}{{\mathbb {RP}}}
\newcommand{\CP}{{\mathbb {CP}}}
\newcommand{\PP}{{\mathbb {P}}}
\newcommand{\ep}{\epsilon}
\newcommand{\G}{{\Gamma}}

\newcommand{\SDiff}{\mathrm{SDiff}}
\newcommand{\SVect}{\mathfrak{s}\mathfrak{vect}}
\newcommand{\vect}{\mathfrak{vect}}

\newcommand{\Ker}[1]{\mathrm{Ker} \, #1}
\newcommand{\grad}[1]{\mathrm{grad} \, #1}
\newcommand{\sgrad}[1]{\mathrm{sgrad} \, #1}
\newcommand{\St}[1]{\mathrm{St} \, #1}
\newcommand{\rank}[1]{\mathrm{rank} \, #1}
\newcommand{\codim}[1]{\mathrm{codim} \, #1}
\newcommand{\corank}[1]{\mathrm{corank} \, #1}
\newcommand{\sgn}[1]{\mathrm{sign} \, #1}
\newcommand{\ann}[1]{\mathrm{ann} \, #1}
\newcommand{\ind}[1]{\mathrm{ind} \, #1}
\newcommand{\Ann}[1]{\mathrm{Ann} \, #1}
\newcommand{\ls}[1]{\mathrm{span} \langle #1 \rangle}
\newcommand{\Tor}[1]{\mathrm{Tor} \, #1}
\newcommand{\diff}[1]{\mathrm{d}  #1}
\newcommand{\diffFX}[2]{ \dfrac{\partial #1}{\partial #2} }
\newcommand{\diffFXp}[2]{ \dfrac{\diff #1}{\diff #2} }
\newcommand{\diffX}[1]{ \frac{\partial }{\partial #1} }
\newcommand{\diffXp}[1]{ \frac{\diff }{\diff #1} }
\newcommand{\diffFXY}[3]{ \frac{\partial^2 #1}{\partial #2 \partial #3} }
\newcommand{\K}{\mathbb{K}}
\newcommand{\centrum}{\mathrm{Z}}
\newcommand{\Complex}{\mathbb{C}}
\newcommand{\Aut}{\mathrm{Aut}}
\newcommand{\Id}{\mathrm{E}}
\newcommand{\D}{\mathrm{D}}
\newcommand{\T}{\mathrm{T}}
\newcommand{\Cont}{\mathrm{C}}
\newcommand{\const}{\mathrm{const}}
\newcommand{\Hom}{\mathrm{H}}
\newcommand{\Ree}[1]{\mathrm{Re} \, #1}
\newcommand{\Imm}[1]{\mathrm{Im} \, #1}
\newcommand{\Tr}[1]{\mathrm{Tr} \, #1}
\newcommand{\tr}[1]{\mathrm{tr} \, #1}
\newcommand{\matrixtwobytwo}{\left(\begin{array}{|cc|}\hline 0 & 0 \\0 & 0 \\\hline \end{array}\right)}
\newcommand{\wave}{\tilde}
\newcommand{\LieBracket}{ [\, , ] }
\newcommand{\PoissonBracket}{ \{ \, , \} }
\newcommand{\g}{\mathfrak{g}}
\newcommand{\h}{\mathfrak{h}}
\newcommand{\lCal}{\mathfrak{l}}
\newcommand{\e}{\mathfrak{e}}
\newcommand{\so}{\mathfrak{so}}
\newcommand{\SO}{\mathrm{SO}}
\newcommand{\Orth}{\mathrm{O}}
\newcommand{\U}{\mathrm{U}}
\newcommand{\he}{\mathfrak{hso}}
\newcommand{\ELL}{\mathfrak{D}}
\newcommand{\hyp}{\mathfrak{D}^{h}}
\newcommand{\foc}{\mathfrak{D}^{\Complex}}
\newcommand{\sP}{\mathfrak{sp}}
\newcommand{\sL}{\mathfrak{sl}}
\newcommand{\ad}{\mathrm{ad}}
\newcommand{\Ad}{\mathrm{Ad}}
\newcommand{\zenter}{\mathrm{Z}}
\newcommand{\id}{\mathrm{id}}
\newcommand{\Ham}{\mathrm{Ham}}
\newcommand{\ham}{\mathfrak{ham}}
\newcommand{\Flux}{\mathrm{Flux}}
\newcommand{\Diffeo}{\mathrm{Diff}}
\newcommand{\halfTwist}{\mathrm{ht}}
\newcommand{\closure}{\wave}

\renewcommand{\proofname}{Proof}

\newcommand{\oneform}{\alpha}
\newcommand{\oneformtwo}{\beta}
\newcommand{\oneformthree}{\gamma}

\newcommand{\circulation}{ \mathfrak c}
\newcommand{\circulationone}{ \circulation}
\newcommand{\circulationtwo}{ \wave \circulation}
\newcommand{\orbit}{\pazocal O}

\newcommand{\Fibr}{\pazocal{F}}
\newcommand{\Fibrtwo}{\pazocal{G}}

\newcommand{\Diff}{\mathfrak{curl}}

\newcommand{\MCG}{\mathrm{Mod}}
\newcommand{\Stab}{\mathrm{Stab}}

\sloppy

\newcounter{bk}
\newcommand{\bk}[1]
{\stepcounter{bk}$^{\bf BK\thebk}$%
\footnotetext{\hspace{-3.7mm}$^{\blacksquare\!\blacksquare}$
{\bf BK\thebk:~}#1}}

\newcounter{ai}
\newcommand{\ai}[1]
{\stepcounter{ai}$^{\bf AI\theai}$%
\footnotetext{\hspace{-3.7mm}$^{\blacksquare\!\blacksquare}$
{\bf AI\theai:~}#1}}

\newcommand*\circled[1]{\tikz[baseline=(char.base)]{
            \node[shape=circle,draw,inner sep=0.8pt] (char) {#1};}}

            \mathtoolsset{showonlyrefs}


\title{Characterization of steady solutions  to the 2D Euler equation}

\author{Anton Izosimov\thanks{
Department of Mathematics,
University of Toronto, Toronto, ON M5S 2E4, Canada;
e-mail: {\tt izosimov@math.toronto.edu} and \tt{khesin@math.toronto.edu}
} \,
and Boris Khesin$^*$}

\date{\vspace{-5ex}}
\maketitle

\begin{abstract} 
Steady fluid flows have very special topology. In this paper we describe necessary and sufficient conditions on the vorticity 
function of a 2D ideal flow on a surface with or without boundary, for which there exists a steady flow among isovorticed fields.
 For this we introduce the notion of an antiderivative (or  circulation function) 
on a measured graph, the Reeb graph associated 
to the vorticity function on the surface, while the criterion is related to the total negativity of this antiderivative.
It turns out that given topology of the vorticity function, the set of coadjoint orbits of the symplectomorphism group 
admitting steady flows  with this topology forms a convex polytope. 
As a byproduct of the proposed construction,  we also describe a complete list of Casimirs for the 2D Euler hydrodynamics: 
we define  generalized enstrophies which, along with  circulations,  form a complete set of invariants for coadjoint orbits of area-preserving diffeomorphisms on a surface.
\end{abstract}

\tableofcontents

\bigskip


\section{Introduction} \label{sect:intro}

The study of topology of steady fluid flows is one of central topics of ideal hydrodynamics. 
Steady flows are key objects in a variety of questions related to hydrodynamical stability, existence of attractors, in
dynamo constructions and magnetohydrodynamics, etc. Such flows satisfy very special constraints 
and are difficult to construct explicitly (beyond certain very symmetric settings). 

In this paper we present  necessary conditions for the existence of steady flows 
with a given vorticity function on an arbitrary 2D surface, possibly with boundary. 
Furthermore, we show that these conditions are also sufficient in the sense that for any vorticity satisfying these conditions
there is a steady flow for an appropriate choice of metric. This solves one of Arnold's problems raised in
\cite{ArnPr} about existence of smooth minimizers
in Dirichlet problems for initial functions of various topology.

One is usually looking for Euler stationary solutions among fields with prescribed vorticity
since vorticity of the fluid is frozen into the flow. Our main tool in the study of isovorticed fields is 
a description of fine properties of the measured Reeb graphs of vorticity functions
and the notion of a graph antiderivative. In this hydrodynamical application we call this antiderivative a circulation function.
As we prove below it turns out that the criterion for the existence of a steady flow is that the corresponding circulation function
is balanced, i.e. circulations in the vicinity of every singular level have the same signs, as we explain below. Furthermore,
given the geometry of the vorticity function, it turns out that the set of coadjoint orbits admitting  a stationary 
Euler flow forms a convex polytope. This opens a variety of questions related to deeper connections between steady 
flows and convex geometry.

\begin{example}\label{ex:intro}
 The following example illustrates the results discussed below in a nutshell.
Consider a generic vorticity function on a surface $M$, see Figure \ref{pretzel} where vorticity is the height function
on a pretzel. (For a surface with boundary we consider a vorticity function constant on the boundary $\partial M$.)

Associate with this function its Reeb graph $\Gamma_F$,  which is the graph representing the
set of connected components of  levels of the height vorticity function $F$. 
Critical points of $F$ correspond to the vertices 
 of the graph $\Gamma_F$. This graph comes with a natural parametrization by the values of $F$. 
Furthermore, the surface $M$ is equipped with an area form, preserved by the fluid motion.
This area form on the surface gives rise to a log-smooth measure on the Reeb graph $\Gamma_F$.
It turns out that such a measured Reeb graph is the only invariant of a simple Morse function on a surface modulo
symplectomorphisms. 
\end{example}
\begin{figure}[t]
\centerline{
\begin{tikzpicture}[thick, scale = 1.5]
  \node at (2.5,2) ()
  {
  \begin{tikzpicture}[thick, scale = 1]
    \draw (2.4,1.9) ellipse (1.4cm and 2.6cm);
   \node at (2.7,1.9) () { 
     \begin{tikzpicture}[thick, yscale = 2]
    \draw   (2.85,1.13) arc (260:100:0.8cm);
    \draw   (2.6,1.23) arc (-80:80:0.7cm);
    \end{tikzpicture}
    };
       \node at (1.7,1.9) () { 
     \begin{tikzpicture}[thick, yscale = 1.5]
    \draw   (1.75,1.48) arc (260:100:0.4cm);
    \draw   (1.55,1.58) arc (-80:80:0.3cm);
    \end{tikzpicture}
    };
        \draw  [->] (0.7,0.6) -- (0.7,3.4);
    \node at (0.4,2) (nodeA) {$F$};
    \end{tikzpicture}
    };
    %
    \node [vertex] at (5.5,0.3) (nodeC) {};
    \node [vertex]  at (5.5,1) (nodeD) {};
    \node at (5.3,0.7) () {{$-1$}};
        \node at (5.1,1.3) () {$-1$};
                \node at (6.05,2) () {$0$};
                                \node at (5.35,2) () {$0$};
                                  \node at (4.85,2) () {$0$};
                                  \draw [->, dashed]  (4,2) -- (4.5,2);
                                   \draw [->, dashed]  (6.8,2) -- (7.3,2);
                                          \node at (5.2,2.7) () {$1$};
                                              \node at (5.4,3.4) () {$1$};
    \node [vertex] at (5.5,3) (nodeE) {};
    \node [vertex]  at (5.5,3.7) (nodeF) {};
    \node [vertex]  at (5.1,1.7) (nodeG) {};    
        \node []  at (5,1.6) () {$v$};    
        \node [vertex]  at (5.1,2.3) (nodeH) {};    
    \draw  [a] (nodeC) -- (nodeD);
    \fill (nodeC) circle [radius=1.5pt];
    \fill (nodeD) circle [radius=1.5pt];
    \fill (nodeE) circle [radius=1.5pt];
    \fill (nodeF) circle [radius=1.5pt];
        \fill (nodeG) circle [radius=1.5pt];
            \fill (nodeH) circle [radius=1.5pt];
    \draw  [a] (nodeE) -- (nodeF);
    \draw[a] (nodeG)  .. controls +(-0.2,+0.3) ..  (nodeH);
        \draw[a] (nodeG)  .. controls +(0.2,+0.3) ..  (nodeH);
                \draw[a] (nodeD) -- (nodeG);
        \draw[a] (nodeH) -- (nodeE);
     \draw[a] (nodeD)  .. controls +(0.6,+1) ..  (nodeE);     
             \node at (4.7, 3) () {$\Gamma_F$};  
                    \node at (2, 3.6) () {$M$};  
\node at (8.5,2.2) () {
\begin{tikzpicture}[thick, scale = 1.3]
\draw (9,3) -- (10,3) -- (11,2) -- (11,1) -- (9,3);
 \fill[opacity = 0.3] (9,3) -- (10,3) -- (11,2) -- (11,1) -- (9,3);
 \node at (8.9,3.15) () {$-2$};
  \node at (9.9,3.15) () {$-1$};
    \node at (11.3,2) () {$-1$};
        \node at (11.3,1) () {$-2$};
 \draw [->] (8.5,3) -- (12,3);
 \node at (11.8, 2.8) () {$c_1(v)$};
  \node at (11.4, 3.9) () {$c_2(v)$};
    \node at (10, 3.9) () {$\Hom_1(\Gamma_F, \R)$};
 \draw [->] (11,0.5) -- (11,4);
     \draw [decoration={text along path,
    text={steady flows},text align={center}, raise = -0.1cm},decorate]    (9.5,3) -- (11,1.5); 
 \end{tikzpicture}
 };
\end{tikzpicture}
}
\caption{A vorticity $F$ on a pretzel, its measured Reeb graph, and the corresponding polytope of totally negative circulation functions.}\label{pretzel}
\end{figure}
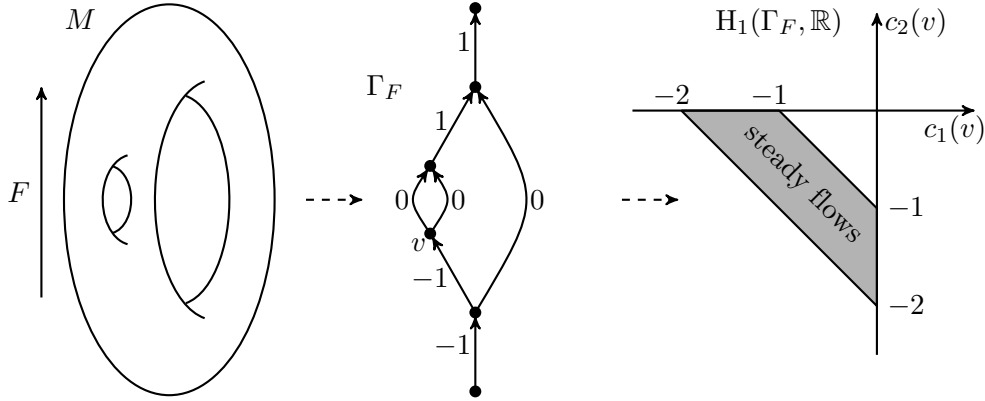

\medskip

{\bf Theorem A (= Theorem \ref{thm:sdiff-functions}).}
{\it The mapping  assigning the measured Reeb graph $\Gamma_F$ to a simple Morse function $F$ provides a one-to-one correspondence between simple Morse functions on $M$ up to  symplectomorphisms 
and measured Reeb graphs compatible with $M$.}
\medskip

Steady flows are known to be critical points of the energy functional restricted to the sets of isovorticed fields, 
i.e. to coadjoint orbits 
of the group of volume- or area-preserving diffeomorphisms of the manifold. 
To describe coadjoint orbits  of the group of symplectomorphisms of a surface $M$ 
we introduce the notion of an antiderivative 
on a graph. Unlike integration on a segment, for a  measured oriented graph the antiderivatives form a space
of dimension equal to the first Betti number of the graph itself, see Section \ref{sect:antider}. (In the case with boundary, the space of antiderivatives has 
dimension of the corresponding relative homology group.) For a pretzel on Figure \ref{pretzel} this space
is two-dimensional, parametrized by circulations over two independent cycles. The space of such antiderivatives, called 
circulation functions, on a graph is in one-to-one correspondence with the space of coadjoint orbits for a given vorticity. 
Roughly speaking, an antiderivative of a measure is a function which is a classical antiderivative at any smooth point 
of the graph,  vanishes at the points of maximum and minimum, while at vertices it can be discontinuous but  satisfies the Kirchhoff  rule. In particular, at any trivalent point in the Reeb graph
the total value of ``incoming" circulations must be equal to the total values of ``outgoing" ones.
One of the main results of the paper fully characterizes which circulation functions correspond to steady flows.
\medskip

{\bf Theorem B (= Theorem \ref{thm:steady}).}
{\it Let $(M, \omega)$ be a compact  symplectic surface, possibly with boundary.
Then Morse-type coadjoint orbit corresponding to a given vorticity function  admits a steady flow if and only if the corresponding circulation function has the same signs near every singular level of the vorticity.}
\medskip

For closed surfaces one can sharpen this description:
\medskip

{\bf Theorem C (= Theorems \ref{thm:steadyclosed} and \ref{thm:polytope}).}
{\it For a symplectic surface without boundary a Morse-type coadjoint orbit for a given vorticity function $F$ admits a steady flow if and only if the corresponding circulation function is totally negative.
Moreover, the set of such totally negative circulation functions is an open convex polytope in the affine space of all circulation functions. In particular, the set of totally negative circulation functions on the Reeb graph $\Gamma_F$ is bounded.
}
\medskip

 The latter implies that ``most orbits"  do not admit any smooth steady flow. 
For instance, consider again the vorticity represented by the height function $F$ of a pretzel, see Figure \ref{pretzel}.
For each edge $e$ of the Reeb graph $\Gamma_F$ consider the domain $M_e\subset M$, the preimage of 
this edge bounded by the corresponding critical levels of $F$. Consider the mean values $\int_{M_e} \!\! F\,\omega$
of the vorticity function in these regions and mark them on the edges of the graph. Assume these mean values of $F$
are as marked on the graph edges in Figure \ref{pretzel}. In order to describe the set of circulation functions
admitting a steady flow we note that the set of all circulation functions on $\Gamma_F$ for given $F$
is parametrized, e.g.,  by limits of circulations $c_1(v), c_2(v)$ at branches of the vertex $v$. (The numbers $c_i(v)$ 
can be thought of as circulations of the fluid velocity along two cycles across two left handles of the pretzel.) 
It turns out that for such a vorticity  the set of orbits admitting 
Euler steady solutions is given only by the pairs of circulations lying inside the shaded trapezoid, see Section \ref{sect:closed}.

\medskip

To summarize, it turns out that there are very restrictive conditions on the sets of isovorticed fields, so that among them there existed a steady flow. 
It is interesting to compare these results with topological (rather than symplectic) necessary conditions 
on vorticity functions to admit steady flows obtained in \cite{GK}. 
In the 2D  case they follow from results of this paper, see Example \ref{GKEx}. In a sense, in \cite{GK} the restrictions are obtained 
by using only the information on the Morse indices of the vorticity, i.e. on its Reeb graph, while in this paper 
we show that the use of a measured Reeb graph allows one to sharpen the necessary conditions and, in addition, to obtain
sufficient conditions on the existence of steady flows.
On the other hand, in the case of a simple laminar flow in an annulus, any vorticity without critical points and constant 
on the boundary produces a steady flow on the corresponding orbit (Corollary \ref{cor:no3val}), 
in a perfect matching with results of \cite{ChSv}.
It would be also interesting to find the relation of steady flows with infinite-dimensional convex geometry related to the group
of symplectomorphisms of an annulus \cite{BFR} and of arbitrary toric manifolds \cite{BHFR}.

\medskip

It is worth mentioning one more byproduct of the use of measured Reeb graphs in the description of coadjoint orbits 
of symplectomorphism groups: it allows one to give a complete list of Casimirs in two-dimensional hydrodynamics.
It is known that the 2D Euler equation admits an infinite number of conserved quantities called enstrophies, which are 
moments of the vorticity function. They are called Casimirs, or first integrals of the motion for any  Riemannian 
metric fixed on the surface. However,  the set of all enstrophies is known to be incomplete for flows 
with generic vorticities: there are non-diffeomorphic vorticities with the same values of enstrophies. In  Appendix B 
we show how measured Reeb graphs can be used to define  generalized enstrophies, and prove that 
they together with the set of circulations form a complete list of Casimir invariants for  flows of an ideal 2D fluid
with simple Morse vorticity functions (Corollary \ref{cor:Casimirs}).

To the best of our knowledge, a complete description of Casimirs in 2D fluid dynamics has not previously appeared 
in the literature, while various partial results could be found in \cite{AK, Serre, Shn, Yoshida}.
Note that steady flows are conditional extrema of the energy functional on the sets of isovorticed fields, 
so Casimirs allow one to single out such sets in order to introduce appropriate Lagrange multipliers. 
Furthermore, Casimirs in fluid dynamics are a cornerstone of the energy-Casimir method 
for the study of hydrodynamical stability, see e.g. \cite{Arn66}.


\section{Euler equations  and steady flows} \label{sect:euler}

\subsection{Geodesic and Hamiltonian frameworks of the Euler equation}

Consider an inviscid incompressible  fluid filling an $n$-dimensional Riemannian manifold  $M$ 
with the Riemannian volume form $\mu$ and, possibly, with boundary $\partial M$. 
The motion of an ideal fluid is governed by the hydrodynamical Euler equations
\begin{equation}\label{idealEuler}
\begin{aligned}
&\partial_t u+(u, \nabla) u=-\nabla p\,,\\
&{\rm div}\, u=0\quad\text{ and }\quad u\parallel\partial M\,,
\end{aligned}
\end{equation}
describing an evolution of the velocity field $u$ of a fluid flow in $M$, supplemented by the divergence-free condition
and tangency to the boundary. The pressure function $p$ entering the Euler equation 
is defined uniquely modulo an additive constant by the above constraints on the velocity $u$. The notation
$(u, \nabla) u$ stands for the Riemannian covariant derivative $\nabla_u u$ of the field $u$ along itself. 

Arnold  \cite{Arn66} showed that the Euler equations in any dimension can be regarded as an 
equation of the geodesic flow on the group $\SDiff(M):=\{\phi\in \Diffeo(M)~|~\phi^*\mu=\mu\}$ 
of volume-preserving diffeomorphisms of $M$ with respect to the right-invariant $L^2$-metric on the group 
given by the energy of the fluid's velocity field: $E(u)=\frac 12\int_M(u,u)\,\mu$. 
In this approach an evolution of the  velocity field $u(t)$ according to the Euler equations is understood as 
an evolution of the vector in the Lie algebra ${\SVect}(M)=\{u\in {\vect}(M) \mid L_u\mu=0\}$, 
tracing the geodesic on the group $\SDiff(M)$ defined by the given initial condition $u(0)=u_0$ (here $L_u$ stands for the Lie derivative along the field $u$).

\par

The geodesic description implies  the following Hamiltonian framework for the Euler equation.  
Consider the (smooth) dual space $\mathfrak g^*={\SVect}^*(M)$ 
to the space $\mathfrak g={\SVect}(M)$ of divergence-free vector fields on $M$.
This dual space  has a natural  description 
as the  space of cosets  $\mathfrak g^*=\Omega^1(M) / \diff \Omega^0(M)$:
for a 1-form $\alpha$ on $M$ its coset of 1-forms is 
$$
[\alpha]=\{\alpha+df\,|\,\text{ for all } f\in C^\infty(M)\}\in \Omega^1(M) / \diff \Omega^0(M)\,.
$$
The pairing between cosets and divergence-free vector fields is straightforward:
$\langle [\alpha],W\rangle:=\int_M\alpha(W)\,\omega$ for any  field $W\in {\SVect}(M)$.
The coadjoint action of the group $\SDiff(M)$ on the dual 
 $\mathfrak g^*$ is given by the change of coordinates in (cosets of) 1-forms on $M$ 
 by means of volume-preserving diffeomorphisms.
 
The Riemannian metric $(.,.)$ on the manifold $M$ allows one to
identify  (the smooth part of) the Lie algebra and its dual by means of the so-called inertia operator:
given a vector field $u$ on $M$  one defines the 1-form $\alpha=u^\flat$ 
as the pointwise inner product with  the velocity field $u$:
$u^\flat(W): = (u,W)$ for all $W\in T_xM$, see details in \cite{AK}. Note also  that divergence-free fields $u$ correspond to co-closed 1-forms $u^\flat$.
The Euler equation \eqref{idealEuler} rewritten on 1-forms $\alpha=u^\flat$ is
$$
\partial_t \alpha+L_u \alpha=-dP\,
$$
for  an appropriate function $P$ on $M$.
In terms of the cosets of 1-forms $[\alpha]$, the Euler equation on the dual space $\mathfrak g^*$ looks as follows: 
\begin{equation}\label{1-forms}
\partial_t [\alpha]+L_u [\alpha]=0\,.
\end{equation}

The  Euler equation \eqref{1-forms} on $\mathfrak g^*=\SVect^*(M)$ turns out to be a Hamiltonian equation with the
Hamiltonian functional  $\mathcal H$ given by the fluid's kinetic energy,
$$
{\mathcal H}([\alpha])=E(u)= \frac 12\int_M(u,u)\,\mu
$$ 
for $\alpha=u^\flat$.
The corresponding Poisson structure is given by  the 
natural linear Lie-Poisson bracket on the dual space $\mathfrak g^*$ 
of the Lie algebra $\mathfrak g$, see details in \cite{Arn66, AK}.
The corresponding Hamiltonian operator is given by the Lie algebra coadjoint action ${\rm ad}^*_u$, 
which  in the case of the diffeomorphism group corresponds to the Lie derivative: ${\rm ad}^*_u=L_u$.
Its symplectic leaves are coadjoint orbits of the corresponding group $\SDiff(M)$.
All invariants of the coadjoint action, also called Casimirs,  are first integrals of the Euler equation
for {\it any choice} of Riemannian metric.
Stationary (or steady) solutions of the Euler equation satisfying $\partial_t u =0$ (or, equivalently, $\partial_t [\alpha] = 0$) are critical points of the restriction of the 
Hamiltonian functional   $\mathcal H$  to coadjoint orbits.
The main result of this paper is a complete characterization of those coadjoint obits that admit steady solutions.

Recall that  according to the Euler equation  \eqref{1-forms} in any dimension  the 
coset of 1-forms $[\alpha]$ evolves by a volume-preserving change of coordinates, 
i.e. during the Euler evolution it remains in the same coadjoint orbit in $\mathfrak g^*$.
Introduce the {\it vorticity 2-form} $\xi:=\diff u^\flat$ as the differential of the 1-form 
$\alpha=u^\flat$ and note  that  the vorticity exact 2-form is well-defined for cosets $[\alpha]$: 
1-forms $\alpha$ in the same coset have equal vorticities $\xi=\diff \alpha$. 
The corresponding Euler equation assumes the vorticity (or Helmholtz) form
\begin{equation}\label{idealvorticity}
\partial_t \xi+L_u \xi=0\,,
\end{equation}
which means that the vorticity form is transported by (or ``frozen into") the fluid flow (Kelvin's theorem).
The definition of vorticity $\xi$ as an exact 2-form $\xi=\diff u^\flat$ makes sense for a manifold $M$ 
of any dimension. 

\begin{remark}
In the case of two-dimensional oriented surfaces $M$   the group $\SDiff(M)$ of 
volume-preserving diffeomorphisms of $M$ coincides with the group ${\rm Symp}(M)$ of symplectomorphisms of $M$ 
with the area form $\mu=\omega$ given by the symplectic structure. Furthermore, in 2D we identify the vorticity 2-form
$\xi$ with a function $F$ satisfying $\xi=F\omega$. Note that since locally any symplectic vector field $u$ is Hamiltonian, it 
satisfies $i_u\omega=-\diff H$ for a (locally defined) Hamiltonian function $H$. Then vorticity function $F$ and the 
Hamiltonian $H$ are related via $\Delta H=-F$, by means of the Laplace-Beltrami operator for the Riemannian metric 
on the surface $M$. 
\end{remark}

\medskip


\subsection{Steady flows and Dirichlet variational problem}

As we mentioned above,  steady flows on a manifold $M$  (of any dimension)  are defined by the Euler stationary equation $\partial_t [\alpha] = 0$, which means
$$
L_u [\alpha]=0\,,
$$ 
where $[\alpha]$ is the coset of the 1-form $\alpha=u^\flat$ obtained from a divergence-free
vector field $u$ on $M$ tangent to the boundary. 
(The Euler stationary equation  is often written in the form $(u,\nabla)u=-\nabla p$ 
for a field $u$ with $\text{div } u=0$, $u\parallel\partial M$, and an appropriate pressure function $p$.)

\begin{remark}
For a two-dimensional surface $M$ steady flows can be described by means of the following  data.
Recall that in the 2D case the vorticity 2-form can be thought of as a function $F=\diff\alpha/\omega$,
and the corresponding  stationary Euler equation assumes the vorticity form $L_uF=0$.
Since  an area-preserving field on a surface $M$ is locally Hamiltonian, 
one can rewrite the stationary condition in terms of the corresponding  
stream (or Hamiltonian) function $H$ of the vector field $u=X_H$.
Namely, the 2D stationary Euler equation $L_uF=0$ assumes the form $\{F, H \}=0$ of vanishing the Poisson bracket 
between  the vorticity $F$ and (local) stream function $H$. The latter in 2D 
means that $F$ and $H$ have  common level curves. 
In particular,  if the Hamiltonian $H=H(F)$ is a smooth function of vorticity $F$ (or vice versa), the corresponding 
Hamiltonian field defines a steady flow.\par
Also note that for steady flows the vorticity $F$ is necessarily constant on connected components of the boundary of $M$, since so is the stream function $H$ (the latter must be constant on boundary components in view of the condition $u\parallel \partial M$).
\end{remark}

In the above description of steady flows one can separate the role of the metric and the area form.
First note that on a surface with a symplectic structure $\omega$ and a Riemannian metric $g(\,,\,)$
there is a natural almost complex structure $J$ relating them: $\omega(V,JW) = g(V,W)$ for any pair of vector fields
$V$ and $W$ on $M$. (On surfaces an 
almost complex structure is always a complex structure for dimensional reasons.) Given only a 
symplectic structure $\omega$ we say that an almost complex structure $J$ is compatible with $\omega$
if the formula above produces a positive-definite metric $g$.

\begin{definition}\label{defn:steady-triple}
Let $(M, \omega)$ be a symplectic surface, and let $F \colon M \to \R$ be a smooth function. Then a triple $(\alpha, J, H)$, where $\alpha$ is a $1$-form, $J$ is an almost complex structure compatible with $\omega$, and $H = H(F)$ is a smooth function of $F$, such that
$J^*\alpha = -\diff  H$ and $\diff \alpha = F  \omega$, is called an \textit{$F$-steady triple}.
\end{definition}

\begin{proposition}
A  steady triple $(\alpha, J, H)$ on a symplectic surface  $(M, \omega)$ 
gives rise to a steady flow with vorticity $F$ and stream function $H$.
\end{proposition}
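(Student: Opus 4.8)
The plan is to produce the steady flow as the velocity field $u$ obtained from $\alpha$ by raising an index with the metric $g$ associated to $J$ and $\omega$, and then to verify the three defining properties of a 2D steady flow: that $u$ is divergence-free, that its vorticity function equals $F$, and that the coset $[\alpha]$ is stationary. First I would set $u := \alpha^\sharp$, i.e. the field with $u^\flat = \alpha$ for the metric $g(V,W) = \omega(V, JW)$. The vorticity statement is then immediate: since $\alpha = u^\flat$, the vorticity $2$-form is $\diff u^\flat = \diff \alpha = F\,\omega$, so the associated vorticity function is exactly $F$.

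The key step is to translate the condition $J^*\alpha = -\diff H$ into information about $i_u\omega$. Using $\alpha = u^\flat$, the compatibility $g(V,W) = \omega(V, JW)$, and $J^2 = -\id$, I would compute for every vector field $V$:
\[
(J^*\alpha)(V) = \alpha(JV) = g(u, JV) = \omega(u, J^2 V) = -\omega(u, V) = -(i_u\omega)(V),
\]
so that $i_u\omega = -J^*\alpha = \diff H$. Because $F$, and hence $H = H(F)$, is a globally defined function on $M$, this exhibits $u$ as a globally Hamiltonian (symplectic) field whose stream function is $H$. In particular $L_u\omega = \diff\, i_u\omega + i_u\diff\omega = \diff\diff H = 0$, so $u$ is divergence-free, and $u$ is tangent to each boundary component of $M$ since $H=H(F)$ is constant there (as $F$ is constant on the boundary for steady flows, cf. the preceding remark).

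It remains to verify stationarity. Since $H = H(F)$, the derivation property of the Poisson bracket gives $\{F, H\} = H'(F)\{F, F\} = 0$, and because $i_u\omega = \diff H$ this is precisely the vorticity form $L_u F = i_u\diff F = 0$ of the stationary Euler equation; equivalently $F$ and $H$ share level curves. To obtain the coset form $L_u[\alpha] = 0$ directly, I would apply Cartan's formula:
\[
L_u\alpha = \diff(i_u\alpha) + i_u\diff\alpha = \diff\big(g(u,u)\big) + F\, i_u\omega = \diff\big(g(u,u)\big) + F\,\diff H,
\]
and note that $F\,\diff H = F\,H'(F)\,\diff F = \diff G$ is exact, where $G = G(F)$ is any antiderivative of $s \mapsto s\,H'(s)$. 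Hence $L_u\alpha = \diff\big(g(u,u) + G\big)$ is exact, i.e. $L_u[\alpha] = 0$, which is the stationary Euler equation on $\mathfrak g^*$.

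I expect the only delicate point to be the sign bookkeeping in the middle paragraph: correctly relating $J^*\alpha$, $i_u\omega$, and $\diff H$ through the compatibility identity together with $J^2 = -\id$, after which both divergence-freeness and the identification of the stream function $H$ are automatic. Everything else is formal manipulation with Cartan's formula and the single structural input that $H$ depends on $F$ alone.
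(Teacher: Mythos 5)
Your proof is correct and takes essentially the same route as the paper: both identify the velocity field $u=\alpha^\sharp$ with the Hamiltonian field of the stream function via the compatibility identity and $J^*\alpha=-\diff H$, and both reduce stationarity, through Cartan's formula $L_u\alpha = \diff (i_u\alpha) + i_u\diff \alpha$, to the exactness of $i_u\diff\alpha = F\,i_u\omega = \pm F H'(F)\,\diff F$. The only divergence is your sign $i_u\omega=+\diff H$ versus the paper's stated convention $i_u\omega=-\diff H$; this is harmless (it amounts to replacing $H$ by $-H$) and in fact matches the sign the paper itself uses later in the proof of Theorem \ref{thm:steady}.
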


\begin{proof}
Indeed,  let us show that such a 1-form $\alpha$ defines a steady flow $u=\alpha^\flat$, i.e. that the condition 
$L_u [\alpha]=0$ is fulfilled.  Since $L_u=i_u\diff + \diff i_u$ we need to show that the 1-form $i_u\diff\alpha$ is exact. 
The latter holds, since  $i_u\diff\alpha=i_uF\omega=Fi_u\omega=-F\diff H(F)=\diff\Phi(F)$ for an appropriate function $\Phi$.
Furthermore, the relation 
$J^*\alpha = -\diff  H$ along with the condition of compatibility of $J$ and $\omega$ for the Hamiltonian field, which satisfies 
$i_u\omega=-\diff H$,  gives $\alpha(\,.\,)=g(u, \,.\,)$, i.e. precisely $u=\alpha^\flat$ in the metric $g$.
Thus the Hamiltonian field $u$ for the Hamiltonian function $H$ is a steady solution of the Euler equation.
\end{proof}

It is worth mentioning that, in addition to the direct definition of steady flows
by means of the stationary Euler equation, there are two variational problems leading to the same characterization, both of which 
have hydrodynamical origin. The first description is a direct consequence of the aforementioned Hamiltonian framework:

\begin{proposition}
Steady flows of an ideal fluid  correspond to critical points of the $L^2$-energy Hamiltonian $H([\alpha])$ on the sets of 
isovorticed fields.
\end{proposition}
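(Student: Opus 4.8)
The plan is to recognize this proposition as the hydrodynamical instance of the general Lie--Poisson principle that the equilibria of a Hamiltonian flow coincide with the critical points of the Hamiltonian restricted to the symplectic leaves. Since, as recalled above, the symplectic leaves of $\mathfrak g^*=\SVect^*(M)$ are precisely the coadjoint orbits of $\SDiff(M)$, and the coadjoint action transports vorticity, these leaves are exactly the sets of isovorticed fields. Thus it suffices to show that a coset $[\alpha]$ is a fixed point of the Euler flow \eqref{1-forms} if and only if $d\mathcal H$ annihilates the tangent space to the orbit $\orbit$ through $[\alpha]$.

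First I would compute the functional derivative of the energy. Writing $u$ for the velocity field with $\alpha=u^\flat$ and varying $[\alpha]$ by $\delta[\alpha]$, one gets $\delta\mathcal H=\int_M(u,\delta u)\,\mu=\int_M\delta\alpha(u)\,\mu=\langle\delta[\alpha],u\rangle$, so that the functional derivative is $\delta\mathcal H/\delta[\alpha]=u$. Next I would identify the tangent space to the coadjoint orbit at $[\alpha]$: differentiating the coadjoint action gives $T_{[\alpha]}\orbit=\{\ad^*_v[\alpha]\mid v\in\SVect(M)\}=\{L_v[\alpha]\mid v\in\SVect(M)\}$, using the identification $\ad^*_v=L_v$.

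With these in hand, the critical point condition reads $\langle\ad^*_v[\alpha],u\rangle=0$ for every $v\in\SVect(M)$. Invoking the defining duality $\langle\ad^*_v[\alpha],u\rangle=\langle[\alpha],\ad_v u\rangle$ together with the antisymmetry $\ad_v u=-\ad_u v$ of the bracket, I would rewrite this as $-\langle\ad^*_u[\alpha],v\rangle=0$ for all $v$; here it is the skew-symmetry of the Lie bracket, rather than any sign convention for $\ad$ versus the vector-field commutator, that lets one exchange the roles of the fixed field $u$ and the test field $v$. By the (weak) nondegeneracy of the pairing on $\bigl(\Omega^1(M)/\diff\Omega^0(M)\bigr)\times\SVect(M)$ — which is exactly why $\mathfrak g^*$ is defined as this quotient — this forces $\ad^*_u[\alpha]=L_u[\alpha]=0$, the steady condition. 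Conversely, $L_u[\alpha]=0$ makes the pairing vanish identically along the orbit, so $[\alpha]$ is a critical point.

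The main subtlety is analytic rather than algebraic: in this infinite-dimensional setting one must ensure that the functional derivative of $\mathcal H$ exists and genuinely lands in $\SVect(M)$, and that the pairing separates the relevant cosets, so that vanishing against all $v\in\SVect(M)$ yields $L_u[\alpha]=0$ as an element of $\mathfrak g^*$ and not merely in a weaker distributional sense. Since the quotient $\Omega^1/\diff\Omega^0$ is constructed precisely to render this pairing nondegenerate, and $L_u[\alpha]=i_u\,\diff\alpha+\diff\,i_u\alpha$ is automatically an exact-vorticity coset, this step goes through; but it is the place where care with the function spaces is needed.
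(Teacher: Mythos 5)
Your proposal is correct and follows essentially the same route as the paper: the paper's proof consists precisely of the observation that, the Euler equation being Hamiltonian on $\mathfrak g^*$ with the $L^2$-energy as Hamiltonian, its equilibria are the critical points of that Hamiltonian on the symplectic leaves (coadjoint orbits), which are then identified with the sets of isovorticed fields. The only difference is one of detail, not of method: where the paper simply invokes this Lie--Poisson principle, you verify it explicitly (the functional derivative $\delta\mathcal H/\delta[\alpha]=u$, the antisymmetry swap $\langle\mathrm{ad}^*_v[\alpha],u\rangle=-\langle\mathrm{ad}^*_u[\alpha],v\rangle$, and the weak nondegeneracy of the pairing between $\Omega^1(M)/\diff\Omega^0(M)$ and $\SVect(M)$), which is a sound filling-in of steps the paper leaves implicit.
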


Indeed, being Hamiltonian on the dual space $\mathfrak g^*$, steady flows are critical points 
of the Hamiltonian function, which is the $L^2$-energy,
on {\it coadjoint orbits} of volume-preserving diffeomorphisms. The latter consist of (cosets of) 1-forms $\alpha$ which
are diffeomorphic by means of volume-preserving diffeomorphisms, and the corresponding vector fields are called 
 isovorticed, since they correspond to diffeomorphic vorticity 2-forms $\diff \alpha$. 

It turns out that steady fields in {\it ideal hydrodynamics} have yet another description as solutions in a variational problem known in {\it magnetohydrodynamics}. 

\begin{proposition} \label{adj-orbit-problem} 
Steady flows of an ideal fluid  correspond to critical points of the $L^2$-energy  $E(u)$ on the sets of 
 fields related by the action of volume-preserving diffeomorphisms.
\end{proposition}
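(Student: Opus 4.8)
The plan is to identify the ``fields related by the action of volume-preserving diffeomorphisms'' with the adjoint orbit of $u$, and then to show that the first variation of the energy $E$ along this orbit vanishes exactly on solutions of the stationary Euler equation $L_u[\alpha]=0$. Concretely, the orbit is $\{\phi_* u \mid \phi\in\SDiff(M)\}$, so a curve through $u$ has the form $u_t=(\phi_t)_* u$, where $\phi_t$ is the flow of a field $v\in\SVect(M)$. Differentiating the pushforward gives the tangent vector $\delta u=\frac{d}{dt}\big|_{t=0}(\phi_t)_* u=-[v,u]=[u,v]$, so the tangent space to the orbit at $u$ consists of the fields $[u,v]$ with $v\in\SVect(M)$.

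First I would differentiate the energy. Since $E(u)=\tfrac12\int_M(u,u)\,\mu$, we get $\delta E=\int_M(u,\delta u)\,\mu=\int_M(u,[u,v])\,\mu$. Writing the $1$-form $\alpha=u^\flat$, defined by $\alpha(W)=(u,W)$, and using the pairing $\langle[\alpha],W\rangle=\int_M\alpha(W)\,\mu$ between $\mathfrak g^*$ and $\mathfrak g$, this becomes $\delta E=\langle[\alpha],[u,v]\rangle$.

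Next I would move the bracket onto $[\alpha]$ by means of the adjoint--coadjoint duality recorded earlier: since $[u,v]=\ad_u v$ and $\ad_u^*=L_u$ for the group of volume-preserving diffeomorphisms, we obtain $\langle[\alpha],[u,v]\rangle=\langle\ad_u^*[\alpha],v\rangle=\langle L_u[\alpha],v\rangle$. Consequently $u$ is a critical point of $E$ on its adjoint orbit precisely when $\langle L_u[\alpha],v\rangle=0$ for all $v\in\SVect(M)$, and since the pairing is nondegenerate this is equivalent to $L_u[\alpha]=0$, i.e.\ to the stationary Euler equation characterizing steady flows.

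The one step requiring care is the transfer $\langle[\alpha],[u,v]\rangle=\langle L_u[\alpha],v\rangle$, where the identity $\ad_u^*=L_u$ is used and one must check that no boundary terms arise; this is ensured by the tangency conditions $u\parallel\partial M$ and $v\parallel\partial M$. It is worth stressing conceptually why this variational problem yields the \emph{same} critical points as the isovorticity problem of the preceding proposition, even though adjoint and coadjoint orbits are genuinely different objects: the first variation of $E$ along an adjoint orbit and the first variation of $\mathcal H$ along a coadjoint orbit both reduce to the single pairing $\langle[\alpha],[u,v]\rangle$, because the inertia operator $u\mapsto u^\flat$ intertwines the energy gradient on $\mathfrak g$ with the Hamiltonian gradient on $\mathfrak g^*$.
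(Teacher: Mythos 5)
Your proposal is correct, but it takes a route the paper itself skips: the paper offers no proof of this proposition, only the remark that this conditional extremum problem is dual to the isovorticity problem of the preceding proposition (critical points of $\mathcal H$ on coadjoint orbits), that the two sets of conditional extrema coincide, and a citation of \cite{AK} for details. You instead carry out the first-variation computation directly: the tangent space to the orbit $\{\phi_* u \mid \phi \in \SDiff(M)\}$ is $\{[u,v] \mid v\in \SVect(M)\}$, the variation of the energy is $\delta E=\int_M(u,[u,v])\,\mu=\langle[\alpha],[u,v]\rangle$ with $\alpha=u^\flat$, and moving the bracket across the pairing converts criticality for all $v$ into the stationary equation $L_u[\alpha]=0$. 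Your closing observation, that the first variation of $\mathcal H$ along the coadjoint orbit in the direction generated by $v$ reduces to the same pairing $\langle[\alpha],[u,v]\rangle$, is precisely the duality the paper asserts without proof, so your argument in effect establishes both variational characterizations at once; what the paper's citation buys is brevity, what your computation buys is a self-contained proposition. Two points to tighten. First, your conventions are internally inconsistent: with the paper's convention $\ad^*_u=L_u$ (the right-invariant framework on $\SDiff(M)$) one has $\ad_u v=-[u,v]$, and the integration by parts --- in which the boundary term is killed by $u\parallel\partial M$ together with $L_u\mu=0$ --- yields $\langle[\alpha],[u,v]\rangle=-\langle L_u[\alpha],v\rangle$; the sign is immaterial for criticality, but the intermediate identity as you wrote it is off by a sign. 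Second, the last step, that $\langle L_u[\alpha],v\rangle=0$ for all $v\in\SVect(M)$ forces $L_u[\alpha]=0$ in $\Omega^1(M)/\diff\Omega^0(M)$, invokes weak nondegeneracy of the pairing, i.e.\ the Hodge-type fact that a $1$-form $L^2$-orthogonal to all divergence-free fields tangent to $\partial M$ is exact; this is the same fact underlying the paper's identification of $\SVect^*(M)$ with $\Omega^1(M)/\diff\Omega^0(M)$, and it deserves to be named rather than treated as automatic.
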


The latter conditional extremum problem is, in a sense, dual to the one above: now we are looking for critical points
of the same energy functional but on the sets of diffeomorphic fields, which are 
{\it adjoint orbits} in the Lie algebra $\mathfrak g$ of the group of volume-preserving diffeomorphisms. 
Nevertheless, the sets of their conditional extrema coincide, see \cite{AK} for the proof and details in a more general context.

In terms of the stream function $H$, the second variational problem 
is closely related to the following minimization problem, posed by V.~Arnold in \cite{Arn74}:

\begin{problem} Given a simple Morse function $\bar H$ on a surface $M$, find the functions from the orbit 
$\mathcal O_{\bar H}= \{H\in C^\infty(M)~|~H=\phi^*{\bar H} \text{ for } \phi\in \SDiff(M)\}$ realizing the minimum 
 for the Dirichlet functional: 
$DF(H):= \int_M (\nabla H, \nabla H)\,\omega$. 
\end{problem}

Comparing this problem with Proposition~\ref{adj-orbit-problem} (and noticing that $DF(H) = \int_M (J\nabla H, J\nabla H)\,\omega = E(u)$)
one can see that functions realizing extremals of the Dirichlet functional 
can be regarded as stream functions of steady flows on $M$.

\begin{example} For a function $\bar H$ on a disk with the only critical point, maximum, inside the disk 
and constant on the boundary, 
the minimum of the Dirichlet functional on the orbit $\mathcal O_{\bar H}$ is assumed on a rotationally symmetric function
\cite{Arn74}. It corresponds to a steady flow on the disk whose stream function depends only on the radius and flow trajectories are concentric circles. On the other hand, for a positive function on the disk with one local maximum, one local minimum, and one saddle point there is no smooth function realizing the
minimum \cite{GK}.
\end{example}

In this paper we show how very restrictive this condition of stationarity is and give a criterion for a
coadjoint orbit to admit a steady solution for an appropriate metric.

\medskip

\subsection{Casimirs of the 2D Euler equation}
The fact that the vorticity 2-form $\xi$ is ``frozen into" the incompressible flow allows one to define first integrals of the  hydrodynamical Euler equation valid for any Riemannian metric on $M$. 
In 2D the Euler equation on $M$ is known to possess infinitely many so-called {\it enstrophy invariants}
$m_\lambda(F):=\int_M \!\lambda(F)\,\omega\,,$
where 
$\lambda(F)$ is an arbitrary function of the vorticity function $F$. In particular,  the enstrophy momenta
$$
m_i(F):=\int_M F^i\,\omega
$$ 
are conserved quantities for any $i\in \mathbb N$.
These Casimir invariants are  fundamental in the study of nonlinear stability of 2D flows, 
and in particular, were  the basis for Arnold's stability criterion in ideal hydrodynamics, see \cite{Arn66, AK}. 
In the energy-Casimir method one studies the second variation of the energy functional 
with an appropriately chosen combination of Casimirs. 

In the case of a flow in an annulus with a vorticity function without critical points such invariants form a complete set 
of Casimirs \cite{ChSv}, while for more complicated functions and domains it is not so. 
In Appendix B we give a complete description of Casimirs in the general setting of Morse vorticity functions 
on two-dimensional surfaces with or without boundary.


\section{Coadjoint orbits of the symplectomorphisms group}\label{sdiffOrbitsSection}

\subsection{Simple Morse functions and measured Reeb graphs}
\begin{definition}
Let $M$ be a compact connected surface with a possibly non-empty boundary $\partial M$.
A Morse function  $F \colon M \to \R $ is called \textit{simple} if it satisfies the following conditions:

\begin{longenum}
\item $F$ is constant on each connected component of the boundary $\partial M$;
\item $F$ does not have critical points at the boundary;
\item any $F$-level\footnote{In what follows, by an $F$-level we mean a connected component of the set $F = \const$.} contains at most one critical point.\end{longenum}
\end{definition}
With each simple Morse function $F \colon M \to \R$, one can associate a graph. This graph $\Gamma_F$ is defined as the space of $F$-levels with 
the induced quotient topology. Each vertex of this graph
corresponds either to a critical level of the function $F$, or to a boundary component of the surface $M$.

The function $F$ on $M$ descends to a function $f$ on the graph $ \Gamma_F$. 
It is also convenient to assume that $\Gamma_F$ is oriented: edges are oriented in the direction of increasing $f$. \par
To distinguish between vertices of  $ \Gamma_F$ corresponding to critical levels of $F$ and vertices corresponding to boundary components, we denote the latter by circles and call them \textit{boundary vertices}. We shall use the notation $\partial\Gamma$ for the set of boundary vertices.

 \begin{figure}[t]
 \centering
\begin{tikzpicture}[thick]
\node () at (0,0) {\includegraphics[scale = 1]{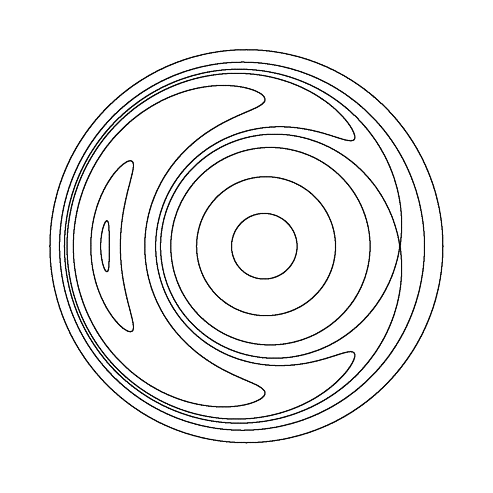}};
\node [] () at (0.2,0.2) {\textit{max}};
 \fill (0.18,0) circle [radius=1.5pt];
\node [] (U) at (-2.5,1.7) {\textit{min}};
\draw [->, dotted]  (U) -- (-1.45,0.01);
 \fill (-1.43,0.0) circle [radius=1.5pt];
  \fill (1.57, 0) circle [radius=1.5pt];
  \node [] (U2) at (2.5,1.7) {\textit{saddle}};
\draw [->, dotted]  (U2) -- (1.6,0.03);
\node () at (5.5, 0)
{
\begin{tikzpicture}[thick, scale = 1.2]
\node [vertex] (A) at (5,0.5) {};
\node [] () at (5,0.7) {\textit{boundary}};
\node[vertex] (B) at (5.5,0) {};
\node [] () at (6.1,0) {\textit{saddle}};
\node[vertex] (C) at (6,1) {};
\node [] () at (6,1.2) {\textit{max}};
\node[vertex] (D) at (5.5,-1) {};
\node [] () at (5.5,-1.2) {\textit{min}};
    \draw (A) circle [radius=2pt];
    \fill (B) circle [radius=1.5pt];
        \fill (C) circle [radius=1.5pt];
            \fill (D) circle [radius=1.5pt];
    \draw [a] (B) -- (A);
        \draw [a] (B) -- (C);
            \draw [a] (D) -- (B);            
            \end{tikzpicture}
            };
\end{tikzpicture}
\caption{A simple Morse function on a disk and the associated graph.}\label{function}
\end{figure}

\begin{example}
{\rm
Figure \ref{function} shows level curves of a simple Morse function on a disk and the corresponding graph $\Gamma_F$.
}
\end{example}

\begin{definition}\label{RGDef}
A \textit{Reeb graph} $(\Gamma,  f)$ is an oriented connected graph  $\Gamma$ with a continuous function 
$f \colon \Gamma \to \R$  which satisfy the following properties.
\begin{longenum}
\item All vertices of $\Gamma$ are either $1$-valent or $3$-valent. 
\item $1$-valent vertices are of two types: boundary vertices and non-boundary vertices.
\item For each $3$-valent vertex, there are either two incoming and one outgoing edge, or vice versa. \end{longenum}

\end{definition}
It is a standard result from Morse theory that the graph $\Gamma_F$ associated with a simple Morse function 
$F \colon M \to \R$ on an orientable connected surface $M$ is a Reeb graph in the sense of Definition \ref{RGDef}. 
We will call this graph the {Reeb graph} of the function $F$. Note that Reeb graphs classify simple Morse functions on $M$ up to diffeomorphisms.
\par
In what follows, we assume that the surface $M$ is endowed with an area (i.e., symplectic) form $\omega$. 
We are interested in the classification problem for simple Morse functions up to area-preserving (i.e., symplectic) 
diffeomorphisms. It turns out that this classification can be given in terms of so-called \textit{log-smooth measures} 
on Reeb graphs.
  \begin{figure}[t]
\centerline{
\begin{tikzpicture}[thick]
\draw  [->] (2,0) -- (2,1.5);
\draw  [->] (2,1.5) -- (3.5,3);
\draw  [->] (2,1.5) -- (0.5,3);
\draw [decoration={text along path,
    text={trunk},text align={center},  raise = 0.1 cm},decorate]  (2,0) -- (2,1.5);
    \draw [decoration={text along path,
    text={branch},text align={center},  raise = -0.3 cm},decorate] (2,1.5) -- (3.5,3);
        \draw [decoration={text along path,
    text={branch},text align={center}, raise = -0.3cm},decorate]   (0.5,3) -- (2,1.5);
    \draw  [->] (5,0) -- (6.5,1.5);
       \draw  [->] (8,0) -- (6.5,1.5); 
              \draw  [->] (6.5,1.5) -- (6.5,3); 
              \draw [decoration={text along path,
    text={trunk},text align={center},  raise = 0.1 cm},decorate] (6.5,1.5) -- (6.5,3);
    \draw [decoration={text along path,
    text={branch},text align={center},  raise = 0.1 cm},decorate]  (5,0) -- (6.5,1.5);
        \draw [decoration={text along path,
    text={branch},text align={center}, raise = 0.1cm},decorate]    (6.5,1.5) -- (8,0); 
\end{tikzpicture}
}
\caption{Trunk and branches in a Reeb graph.}\label{trb}
\end{figure}
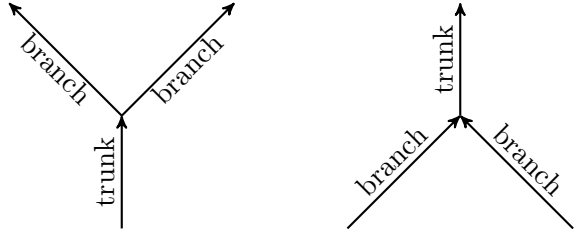
 \begin{definition}
 Let $\Gamma$ be a Reeb graph. Assume that $e_0, e_1$, and $e_2$ are three edges of  $\Gamma$ which meet at a $3$-valent vertex $v$. Then $e_0$ is called the \textit{trunk} of $v$, and $ e_1, e_2$ are called \textit{branches} of $v$ if either $e_0$ is an outgoing edge for $v$, and $e_1, e_2$ are its incoming edges, or  $e_0$ is an incoming edge for $v$, and $e_1, e_2$ are its outgoing edges (see Figure \ref{trb}).
\end{definition}
\begin{definition}\label{measureproperty} A measure $\mu$ on a Reeb graph $(\Gamma,f)$ is called \textit{log-smooth} if it has the following properties:
\begin{longenum}
\item It has a $C^\infty$ smooth non-vanishing density $\diff \mu / \diff f$ at interior points and $1$-valent vertices of $\Gamma$.
\item At $3$-valent vertices, the measure $\mu$ has logarithmic singularities. More precisely, one has the following. Assume that $v$ is a $3$-valent vertex of $\Gamma$. Without loss of generality assume that $f(v)=0$ (if not, we replace $f$ by $\tilde f(x):=f(x)-f(v)$).
Let $e_0$ be the trunk of $v$, and let $e_1, e_2$ be the branches of $v$.
 Then there exist functions $ \psi, \eta_0, \eta_1,\eta_2$ of one variable, smooth in the neighborhood of the origin $0\in \R$ and such that for any point $x \in e_i$ sufficiently close to $v$, we have
\begin{align*}
\mu([v,x])&= \,\eps_i\psi(  f(x) )\ln |   f (x)| + \eta_i(   f(x) ),
\end{align*}
where $\eps_0 = 2$, $\eps_1 = \eps_2 =  -1$,  $\psi(0) = 0$, $\psi'(0) \neq 0$,
and $\eta_0 + \eta_1 + \eta_2 = 0$.
\end{longenum}
\end{definition}
\begin{definition}
 A Reeb graph $(\Gamma,  f)$ endowed with a log-smooth measure $\mu$ is called a \textit{measured Reeb graph}.
 \end{definition}
  
  If a surface $M$ is endowed with an area form $\omega$, then the Reeb graph $\Gamma_F$ of any simple Morse function $F \colon M \to \R$ has a natural structure of a measured Reeb graph. The measure $\mu$ on $\Gamma_F$ is defined as the pushforward of the area form on $M$ under the natural projection $\pi \colon M \to \Gamma_F$.\par
  It turns out that there is a one-to-one correspondence between simple Morse functions on $M$, considered up to symplectomorphisms, and measured Reeb graphs satisfying the following natural compatibility conditions:

\begin{definition}\label{def:compatible}
{\rm
Let $M$ be a connected surface, possibly with boundary, endowed with a symplectic form $\omega$.
A measured Reeb graph  $(\Gamma,  f, \mu)$ is \textit{compatible with} $(M, \omega)$ if
\begin{longenum}
\item the genus $\dim \Hom_1(\Gamma, \R)$ of $\Gamma$ is equal to the genus 
of  $M$;
\item the number of boundary vertices of $\Gamma$ is equal to the number of components of $\partial M$;
\item
 the volume of $\Gamma$ with respect to the measure  $\mu$ 
is equal to the volume of $M$: $\int_\Gamma \diff \mu=\int_M\omega.$
\end{longenum}
}
\end{definition}

\begin{theorem}\label{thm:sdiff-functions}
The mapping  assigning the measured Reeb graph $\Gamma_F$ to a simple Morse function $F$ provides a one-to-one correspondence between simple Morse functions on $M$ up to a symplectomorphism 
and measured Reeb graphs compatible with $M$.
\end{theorem}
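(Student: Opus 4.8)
The plan is to establish the stated bijection in three parts: that the assignment $F \mapsto (\Gamma_F, f, \mu)$ is well defined (i.e.\ lands among measured Reeb graphs compatible with $(M,\omega)$), that it is injective modulo symplectomorphisms, and that it is surjective. Throughout I would use as a black box the classical fact, quoted above, that the underlying oriented Reeb graph $(\Gamma_F, f)$ already classifies $F$ up to an (orientation-preserving) \emph{diffeomorphism} of $M$; the entire content of the theorem is therefore the passage from diffeomorphisms to symplectomorphisms, which is exactly what the measure $\mu$ records.

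For well-definedness, the only nontrivial point is that $\mu := \pi_*\omega$ is log-smooth in the sense of Definition \ref{measureproperty}. Away from critical levels $\pi$ is a proper submersion with circle (or interval) fibers, so the coarea formula gives $\diff\mu/\diff f = \oint_{F=c}\omega/\diff F$, a smooth positive density, establishing property (i) at interior points and, after inspecting the disk caps, at $1$-valent vertices. At a saddle I would use the symplectic Morse normal form $\omega = \diff x\wedge \diff y$, $F = xy$: the area caught between the level $\{F=0\}$ and $\{F=c\}$ inside a coordinate box is $\int \diff x\,\diff c/|x|$, and integrating $\diff x/|x|$ out to the edge of the box produces precisely a term $\psi(c)\ln|c| + \eta(c)$ with $\psi(0)=0$, $\psi'(0)\neq0$; bookkeeping the three local sheets (the trunk collecting both, each branch one) yields the coefficients $\eps_0=2$, $\eps_1=\eps_2=-1$ and the relation $\eta_0+\eta_1+\eta_2=0$, which is just additivity of area near $v$. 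The three compatibility conditions of Definition \ref{def:compatible} are then immediate: $\dim \Hom_1(\Gamma_F,\R)$ equals the genus of $M$ and the number of boundary vertices equals the number of components of $\partial M$ by construction of the Reeb graph, while $\int_\Gamma \diff\mu = \int_M \omega$ because $\mu$ is a pushforward.

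For injectivity, suppose $F_1, F_2$ have isomorphic measured Reeb graphs. By the classical classification I may precompose with a diffeomorphism and assume $F_1=F_2=:F$ and $\Gamma_{F_1}=\Gamma_{F_2}=:\Gamma$, so the data reduce to two area forms $\omega_1,\omega_2$ on $M$ with the same Morse function $F$ and equal pushforwards $\pi_*\omega_1=\pi_*\omega_2$; I must then produce a diffeomorphism $\psi$ with $\psi^*F=F$ and $\psi^*\omega_2=\omega_1$, which is the sought symplectomorphism. Over the open part of each edge, $M_e\cong S^1\times(a,b)$ with $F$ the second projection; writing $\omega_i = g_i\,\diff\theta\wedge \diff F$, equality of pushforwards means $\oint g_1\,\diff\theta=\oint g_2\,\diff\theta$ for every $F$, and a fibrewise reparametrization of the circle coordinate (the elementary fact that two measures of equal total mass on $S^1$ are conjugate by a circle diffeomorphism, carried out smoothly in $F$) gives a fibre-preserving $\psi_e$ with $\psi_e^*\omega_2=\omega_1$. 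The work is to glue these edge maps across the vertices: near a saddle I would appeal to a symplectic normal form and use that the two forms induce the same log-smooth germ of $\mu$ to match the local models, then interpolate by a relative Moser argument on a neighborhood of each critical level. I expect this saddle-matching to be the main obstacle, since $\diff F$ vanishes there and the straightforward Moser homotopy degenerates; the logarithmic data in Definition \ref{measureproperty} is exactly the invariant that must be shown to control the germ of $\omega$ up to fibre-preserving symplectomorphism.

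For surjectivity, given a compatible measured Reeb graph $(\Gamma,f,\mu)$ I would build an explicit model $(M',\omega',F')$ by assembling standard symplectic pieces: a capped disk over each $1$-valent interior vertex, a boundary collar over each boundary vertex, a symplectic cylinder over each edge with area form chosen so that $\pi'_*\omega'$ realizes the prescribed density $\diff\mu/\diff f$, and a trousers (saddle) piece over each $3$-valent vertex in the normal form $\diff x\wedge \diff y$, $F=xy$. The log-smoothness conditions---in particular $\psi(0)=0$, $\psi'(0)\neq0$ and $\eta_0+\eta_1+\eta_2=0$---are precisely the consistency relations that let the densities on the three incident edges be glued smoothly across each saddle, so the assembly produces a genuine simple Morse function $F'$ on a surface $M'$ whose measured Reeb graph is $(\Gamma,f,\mu)$. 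By compatibility, $M'$ has the same genus and number of boundary components as $M$, and $\int_{M'}\omega'=\int_\Gamma \diff\mu=\int_M\omega$; hence by Moser's theorem (closed or relative) classifying symplectic forms on a surface by topology and total area, there is a symplectomorphism $M'\to (M,\omega)$, and transporting $F'$ through it yields a simple Morse function on $(M,\omega)$ with the prescribed measured Reeb graph. Combining the three parts gives the bijection.
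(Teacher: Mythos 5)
Your overall structure (well-definedness, injectivity, surjectivity) is sound, and the well-definedness part is essentially right: the coarea computation on edges and the Morse--Darboux normal form $\omega = \diff P \wedge \diff Q$, $F = \zeta(PQ)$ at saddles do produce the $\psi(f)\ln|f| + \eta_i(f)$ asymptotics of Definition \ref{measureproperty}. But the proposal has a genuine gap, and it sits exactly where you yourself flag ``the main obstacle'': the matching of the two symplectic forms across a singular figure-eight level in the injectivity step. After the edge-wise fibrewise reparametrization you only have a fibre-preserving map off the critical levels, and to glue, or to run a ``relative Moser argument'' preserving $F$, you need a primitive $\sigma$ of $\omega_2 - \omega_1$ of the special form $\sigma = h\,\diff F$ with $h$ smooth and single-valued on a whole neighborhood of the figure-eight: in 2D a Moser field $X_t$ with $i_{X_t}\omega_t = -\sigma$ preserves $F$ if and only if $\sigma \wedge \diff F = 0$. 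Away from the critical point equal pushforwards do give such an $h$ on each annulus, but whether these local $h$'s extend smoothly across the singular fiber --- equivalently, whether the log-smooth germ $(\psi, \eta_0,\eta_1,\eta_2)$ is a \emph{complete} invariant of the germ of $\omega$ along the figure-eight up to fibre-preserving symplectomorphism --- is precisely the mathematical content of the theorem. Saying that the log data ``is exactly the invariant that must be shown to control the germ'' states this problem; it does not solve it. The same issue reappears in your surjectivity step, where you assume without proof that every prescribed log-smooth germ at a $3$-valent vertex can be realized by a symplectic trousers piece glued to the adjacent cylinders; this realization statement is also nontrivial.

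It is also worth noting that your route is genuinely different from the paper's, which never attempts this singular-fiber analysis: it contracts each boundary component of $M$ to a point using action-angle coordinates (the chart $P=\sqrt{S}\cos\Theta$, $Q=\sqrt{S}\sin\Theta$), thereby reducing the case with boundary to the closed case, and then invokes the closed-surface classification (Theorem 3.11 of \cite{IKM}) as a black box; the hard semi-local analysis you are missing is exactly what is carried out there. So your plan, if completed, would amount to reproving the result of \cite{IKM} from scratch. Within the present paper the economical fix is the paper's own one: keep your edge-wise and normal-form observations only as motivation, cite the closed case, and handle the boundary via the contraction functor rather than by bare-hands assembly.
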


When $M$ is closed, this result is Theorem 3.11 of \cite{IKM}, and, as we explain below, the general case can be reduced to the case of closed $M$ by means of the symplectic cut operation.
\par

 Let $M$ be a connected surface with boundary endowed with a symplectic form $\omega$ and let  $F \colon M \to \R$ 
 be a simple Morse function. Consider the topological space $\closure M$ obtained from $M$ by contracting each 
 connected component of $\partial M$ into a point. Denote by  $\pi \colon M \to \closure M$  the natural projection, and by the 
  $\closure F$ and $\closure \omega$ the pushforwards to the $\closure M$ of the corresponding function $ F$ 
  and symplectic structure $\omega$.
  
 \begin{proposition}
 There exists a smooth structure on $\closure M$ such that the function $F$ and symplectic structure $\omega$ descend to smooth objects  $\closure F$ and $\closure \omega$ on $\closure M$. 
  \end{proposition}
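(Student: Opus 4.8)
The plan is to reduce everything to a local analysis near each component of $\partial M$, since away from the boundary the projection $\pi \colon M \to \closure M$ is a diffeomorphism and the pushforwards $\closure F, \closure \omega$ coincide there with the smooth objects $F, \omega$. Fix a boundary component $B$, set $c = F|_B$, and (after replacing $F$ by $-F$ if needed) assume $F > c$ on a collar of $B$. I will realize the contraction of $B$ as the symplectic cut referred to after Theorem \ref{thm:sdiff-functions}, producing an explicit smooth chart at the new point $\closure B := \pi(B)$ by means of action--angle coordinates adapted to $\omega$.

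First I would introduce the \emph{area function} $a(x) := \int_{\{c \le F \le F(x)\}} \omega$ on the collar. Since $B$ carries no critical points of $F$ it is a regular level, so the $F$-levels near $B$ are circles; and because the measure $\mu$ on the Reeb graph $\Gamma_F$ has a smooth non-vanishing density $\diff \mu / \diff f$ at the $1$-valent boundary vertex, one has $a = A(F - c)$ with $A$ smooth near $0$, $A(0) = 0$ and $A'(0) \neq 0$. In particular $A$ admits a smooth local inverse $A^{-1}$. Next I would consider the Hamiltonian field $X_a$ determined by $i_{X_a}\omega = \diff a$. As $\diff a \neq 0$ and $X_a$ is tangent to the $F$-levels, it is a nonvanishing field on each such circle, so its flow is periodic.

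The key step is to check that this flow has \emph{constant} period, equal to $1$; this is exactly the reason for taking the enclosed area, rather than $F$ itself, as the action variable. By construction the region $\{a \le s\}$ has $\omega$-area equal to $s$, and for a one--degree--of--freedom system the period of the Hamiltonian flow on the level $\{a = s\}$ is the derivative in $s$ of the enclosed area, hence identically $1$. Choosing a smooth section of the circle fibration of the collar and letting $\phi \in \R/\Z$ be the normalized flow time, I obtain coordinates $(a, \phi)$ in which $\omega = \diff a \wedge \diff \phi$, the sign being fixed by the orientation.

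Finally I would substitute $a = \pi r^2$, $\phi = \theta / 2\pi$ and pass to $x = r\cos\theta$, $y = r\sin\theta$. This map extends across $r = 0$ to a chart sending $\closure B$ to the origin, and in it $\omega = \diff a \wedge \diff \phi = \diff x \wedge \diff y$ is smooth, while $\closure F = c + A^{-1}(\pi(x^2+y^2))$ is a smooth function of $x^2 + y^2$ because $A^{-1}$ is smooth; thus $\closure B$ becomes a nondegenerate local extremum. Declaring these charts to be smooth near each collapsed point, and keeping the original smooth structure of $M$ elsewhere, defines the smooth structure on $\closure M$; compatibility on the overlaps is automatic, since there both descriptions reduce to the smooth structure of the punctured collar. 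The main obstacle is precisely the constant-period claim and the resulting construction of the angular coordinate $\theta$: it is the use of the $\omega$-area as action variable that turns the collapsed point into a smooth (rather than conical) point and makes both $\closure \omega$ and $\closure F$ extend smoothly.
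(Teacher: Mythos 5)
Your proof is correct and follows essentially the same route as the paper: both pass to action-angle coordinates near the boundary circle (your $(a,\phi)$ are the paper's $(S,\Theta)$ up to normalization, with $F$ a smooth function of the action having nonzero derivative at $0$) and then define the chart at the collapsed point by the square-root polar substitution, under which $\closure \omega$ and $\closure F$ visibly extend smoothly. The only difference is that you derive the action-angle coordinates yourself (constant period via the enclosed-area derivative), whereas the paper invokes their existence as a standard fact, so your write-up is, if anything, more self-contained.
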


 \begin{proof}
In a sense, this is a nonlinear version of the Archimedes theorem that the radial projection from a sphere 
to the circumscribed cylinder is area-preserving. Indeed, in a neighborhood  of every boundary component $\ell$ of $M$
introduce  \textit{action-angle coordinates}, i.e. smooth functions 
$$
S \colon U \to \R_{\geq 0} = \{ z \in \R \mid z \geq 0\}, \quad \Theta \colon U \to S^1 = \R \, / \, 2\pi\Z,
$$ 
such that $\omega = \diff S\wedge \diff \Theta$, $S = 0$ on $\ell$, and $F = \zeta(S)$ where $\zeta$ is a smooth function 
in one variable such that $\zeta'(0) \neq 0$.  Then in a neighborhood of the point $Z = \pi(\ell)\in \closure M$, the
 image of this boundary component,  we define a chart by taking the functions 
 $P := \sqrt{S}\cos \Theta$, $Q := \sqrt{S}\sin \Theta$  as local coordinates.
Then  the function  $F$ in terms of functions $P$, $Q$ near each point $Z$ becomes $\closure F = \zeta(P^2 + Q^2)$,
while  the symplectic structure now is $\closure \omega = 2 \diff P \wedge \diff Q$.
 \end{proof}

  \begin{proof}[Proof of  Theorem \ref{thm:sdiff-functions}]
We have a functor $   (M,  F,  \omega) \mapsto (\closure M, \closure F, \closure \omega)$ from the category of triples 
$( M,  F,  \omega)$ on surfaces with boundary to the category of triples $(\closure M, \closure F, \closure \omega)$ 
on closed surfaces. Note that in order to invert this functor one needs to specify minima and maxima of the function 
$\closure F$ that correspond to boundary components of $M$. At the level of Reeb graphs, this correspondence 
replaces boundary vertices by marked vertices. 
Therefore, we have a one-to-one correspondence between triples $( M,  F,  \omega)$ (with boundary) and triples 
$(\closure M, \closure F, \closure \omega)$ (without boundary) with marked minima and maxima. 
Since measured Reeb graphs 
up to isomorphism completely describe simple Morse functions on closed symplectic surfaces 
 modulo symplectomorphisms (see Theorem~3.11 of~\cite{IKM}),  
this one-to-one correspondence extends to symplectic surfaces with boundary.
\end{proof}

  \begin{remark}
Note that the functor $   (M,  F,  \omega) \mapsto (\closure M, \closure F, \closure \omega)$ may be viewed as a particular case of the symplectic cut operation introduced by E.\,Lerman \cite{lerman1995symplectic}.
 \end{remark}

 
 \medskip
 \subsection{Antiderivatives on graphs}\label{sect:antider}

 In order to pass from the above classification of simple Morse functions on symplectic surfaces to the classification of coadjoint orbits of the group $\SDiff(M)$, we need to introduce the notion of the antiderivative of a density on a graph. Let $\Gamma$ be an oriented graph, some of whose vertices are marked and called boundary vertices. Let also $ \rho$ be a \textit{density} on $\Gamma$, i.e. a finite Borel signed measure.
 
 \begin{definition}\label{circProperties}
A function $\lambda \colon \Gamma \,\setminus\, V \to \R$ defined and continuous on the graph $\Gamma$ outside its set of vertices $V = V(\Gamma)$ is called an \textit{antiderivative of the density $\rho$} if it has the following properties.
\begin{longenum}
\item It has at worst jump discontinuities at vertices, which means that for any vertex $v \in V$ and any edge $e \ni v$, there exists a finite limit
$$
\lim\nolimits_{{x \xrightarrow[]{e} v }} \lambda(x),
$$
where $ {{x \xrightarrow[]{e} v }} $ means ``as $x$ tends to $v$ along the edge $e$''.
\item Assume that $x,y$ are two interior points of some edge $e \in \Gamma$, and that $e$ is pointing from $x$ towards $y$. Then $\lambda$ satisfies the Newton-Leibniz formula
\begin{align}
\label{stokes}
\lambda(y) - \lambda(x) = \rho([x,y]).
\end{align}
\item Let $v$ be a non-boundary vertex of $\Gamma$. Then $\lambda$ satisfies the Kirchhoff rule at $v$:
\begin{align}
\label{3valentcirc}
 \sum_{{e \to v}} \lim\nolimits_{{x \xrightarrow[]{e} v }} \lambda(x)= \sum_{{e \leftarrow v}} \lim\nolimits_{{x \xrightarrow[]{e} v }} \lambda(x)\,,
\end{align}
where the notation $e \to v$ stands for the set of edges pointing at the vertex $v$, and ${e \leftarrow v}$ stands for the set of edges pointing away from $v$. 
\end{longenum}
\end{definition}

\begin{proposition}\label{circFuncs}
Let $\Gamma$ be an oriented graph with marked boundary vertices. 
\begin{longenum}
\item If the graph $\Gamma$ has at least one boundary vertex, then any density $\rho$ on $\Gamma$ admits an antiderivative.
\item 
If $\Gamma$ has no boundary vertices, then a density $\rho$ on $\Gamma$ admits an antiderivative if and only if
$
\rho(\Gamma) = 0
$.
\item
If a density $\rho$ on $\Gamma$ admits an antiderivative, then the set of antiderivatives of $\rho$ is an affine space whose underlying vector space is the relative homology group $\Hom_1(\Gamma, \partial\Gamma ,\R)$, where $\partial \Gamma$ is the set of boundary vertices of $\Gamma$.
\end{longenum}
\end{proposition}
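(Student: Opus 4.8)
The plan is to reduce the existence and classification of antiderivatives to a single inhomogeneous linear system whose matrix is the relative simplicial boundary operator of $\Gamma$, and then to read all three statements off from the homology of the pair $(\Gamma, \partial\Gamma)$.

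First I would fix, on the interior of each edge $e$, a reference antiderivative: choosing an interior base point $x_0 \in e$ and setting $\lambda^0_e(x) := \pm\rho([x_0,x])$ (sign dictated by the orientation) gives a function satisfying the Newton--Leibniz rule \eqref{stokes}, and since $\rho$ is a finite signed measure it has finite one-sided limits at the endpoints, so property (i) of Definition \ref{circProperties} holds automatically. Every candidate antiderivative is then $\lambda^0_e + c_e$ on $e$ for constants $c_e \in \R$, and the only remaining constraints are the Kirchhoff rules \eqref{3valentcirc} at non-boundary vertices. Writing $\ell^v_e$ for the limit of $\lambda^0_e$ at $v$ along $e$, these become a linear system $A\vec c = \vec b$ in the unknowns $\vec c = (c_e)$, with $b_v = \sum_{e \leftarrow v}\ell^v_e - \sum_{e \to v}\ell^v_e$.

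The key observation I would use is that the left-hand side of the Kirchhoff equation at $v$ is exactly the $v$-component of the boundary $\partial_1 e = h(e) - t(e)$, so $A$ is the relative boundary operator $\partial_1 \colon C_1(\Gamma;\R) \to C_0(\Gamma;\R)/C_0(\partial\Gamma;\R) = C_0(\Gamma,\partial\Gamma;\R)$. Because $\Gamma$ has no cells of dimension $\geq 2$, its kernel is $\Hom_1(\Gamma,\partial\Gamma;\R)$ and its cokernel is $\Hom_0(\Gamma,\partial\Gamma;\R)$. From here the three parts follow formally. Part (iii) is the standard fact that a consistent inhomogeneous system has solution set affine over $\ker A = \Hom_1(\Gamma,\partial\Gamma;\R)$, the homogeneous solutions being functions constant on each edge, i.e. antiderivatives of the zero density. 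For part (i), when $\Gamma$ is connected with $\partial\Gamma \neq \emptyset$ the group $\Hom_0(\Gamma,\partial\Gamma;\R)$ vanishes (the map $\Hom_0(\partial\Gamma;\R) \to \Hom_0(\Gamma;\R)$ is onto since $\Gamma$ is connected and $\partial\Gamma$ nonempty), so $\partial_1$ is onto and the system is solvable for every $\rho$. For part (ii), with $\partial\Gamma = \emptyset$ one has $\mathrm{im}\,\partial_1 = \ker\epsilon$ for the augmentation $\epsilon \colon \sum_v a_v v \mapsto \sum_v a_v$, so $\vec b$ lies in the image iff $\sum_v b_v = 0$; a telescoping computation using $\ell^{h(e)}_e - \ell^{t(e)}_e = \rho(\mathring e)$ collapses the doubly-indexed sum to $\sum_v b_v = -\sum_e \rho(\mathring e) = -\rho(\Gamma)$, giving solvability exactly when $\rho(\Gamma)=0$.

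I expect the main obstacle to be purely bookkeeping: matching orientation signs so that the Kirchhoff matrix is honestly the relative boundary operator (and hence its kernel is genuinely the relative homology group, not a twisted version), and carrying out the telescoping identity $\sum_v b_v = -\rho(\Gamma)$ cleanly. This identity also pinpoints where the hypothesis enters, since vertex atoms of $\rho$ are invisible to the Newton--Leibniz constraint; one reads $\rho(\Gamma)$ as $\sum_e \rho(\mathring e)$, which is harmless in the intended application where $\rho$ arises from an area form and is atomless.
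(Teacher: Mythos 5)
Your proposal is correct and takes essentially the same route as the paper: the paper also identifies candidate antiderivatives with $1$-chains on $\Gamma$ (using the head-limits $\lambda^+(e)$ as coordinates, where you use offsets $c_e$ from reference solutions) and rewrites the Kirchhoff constraints as a relative boundary condition, so that existence is controlled by $\Hom_0(\Gamma, \partial\Gamma, \R)$ --- via the augmentation map when $\partial\Gamma = \emptyset$, exactly as in your part (ii) --- and the solution set is affine over $\Hom_1(\Gamma, \partial\Gamma, \R)$. Your closing caveat about vertex atoms applies equally to the paper's own step $\eps(\chi) = -\rho(\Gamma)$, which likewise reads $\rho(\Gamma)$ as the sum of the edge-interior masses, and is harmless for the atomless log-smooth densities in the intended application.
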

 \begin{figure}[t]
\centerline{
\begin{tikzpicture}[thick]
    \node [vertex] at (7,-0.2) (nodeC) {};
    \node [vertex]  at (7,1.05) (nodeD) {};
    \node [vertex] at (7,2.75) (nodeE) {};
    \node [vertex]  at (7,4.2) (nodeF) {};
    \draw  [a] (nodeC) -- (nodeD);
    \fill (nodeC) circle [radius=1.5pt];
    \fill (nodeD) circle [radius=1.5pt];
    \fill (nodeE) circle [radius=1.5pt];
    \fill (nodeF) circle [radius=1.5pt];
  \draw [a] (nodeD) .. controls +(0.7, 0.5) and +(0.7,-0.5) .. (nodeE);
    \draw [a] (nodeD) .. controls +(-0.7, 0.5) and +(-0.7,-0.5) .. (nodeE);
    \draw  [a] (nodeE) -- (nodeF);
    \node at (7.8,1.9) () {{${e_3}$}};
      \node at (6.2,1.9) () {{${e_2}$}};
     \node at (7.3,3.5) () {{${e_4}$}};
          \node at (6.8,0.3) () {{${e_1}$}};
          \node at (7.2, -0.1) () {$0$};
                \node at (6.8, 4.1) () {$0$};
                    \node at (7.3, 0.85) () {$a_1$};
                            \node at (7.8, 1.2) () {$a_1 - z$};
                              \node at (6.6, 1.2) () {$z$};
                                   \node at (6.15, 2.6) () {$ a_2 + z$};
                                   \node at (8.25, 2.6) () {$a_1 + a_3 -z$};
                                    \node at (6.65, 3) () {$-a_4$};
\end{tikzpicture}
}
\caption{The space of antiderivatives on a graph of genus one.}\label{circTorus}
\end{figure}
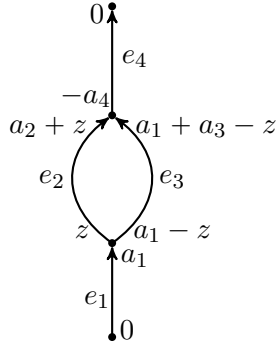

\begin{example}\label{example:circtorus}

Consider the graph $\Gamma$ depicted in Figure \ref{circTorus}.  Let $\rho$ be a density on $\Gamma$ such that
$\rho(e_i) = a_i$, where the numbers $a_i$ satisfy $a_1 + a_2 + a_3 + a_4 = 0$ (so that the density $\rho$ admits an antiderivative). The numbers near vertices in the figure stand for the limits of the antiderivative $\lambda$ of $\rho$.  The space of such antiderivatives has one parameter
$z$, which determines how the splitting of the antiderivative between the edges $e_2$ and $e_3$ takes place  
(by the proposition above the space of antiderivatives is one-dimensional).
\end{example}
\begin{proof}[Proof of Proposition \ref{circFuncs}]
Let $\lambda \colon \Gamma \, \setminus \, V \to \R$ be any function satisfying conditions i) and ii) of Definition \ref{circProperties}. Let $e$ be an edge of $\Gamma$ going from $v$ to $w$, and let $x \in e$. Denote by $\lambda^-(e)$ and $\lambda^+(e)$ the limits of $\lambda(x)$ as $x$ tends to $v$ and $w$ respectively. 
Consider the $1$-chain
\begin{align}\label{1chain}
\mathrm{ch}(\lambda) := \sum_{{e\, \in \, E}} \lambda^+(e)e\,,
\end{align}
where $E = E(\Gamma)$ stands for the set of edges of $\Gamma$. Note that 
the mapping $\mathrm{ch}$ is a one-to-one map between functions $\lambda \colon \Gamma \, \setminus \, V \to \R$ satisfying conditions i) and ii) of Definition \ref{circProperties} and $1$-chains on $\Gamma$. Then we have
\begin{align*}
\partial \mathrm{ch}(\lambda) =\sum_{{v \, \in \, V}}\left( \sum_{{e \to v}} \lambda^+(e) - \sum_{{e\leftarrow v}}  \lambda^+(e) \right)v = \chi + \sum_{{v \, \in \, V}}\Delta(v) v \,,
\end{align*}
where
$\chi$ is a $0$-chain given by
$$
\chi := -\sum_{{v \, \in \, V}}\sum_{{e \leftarrow v}} \rho(e)v\,,
$$
and
$$
\Delta(v) :=  \sum_{{e \to v}} \lambda^+(e) \,\,- \sum_{{e \leftarrow v}}  \lambda^-(e) \,.
$$
Note that $\Delta(v) = 0$ is equivalent to the Kirchhoff rule at $v$. Therefore, $\lambda$ is an antiderivative of $\rho$ if and only if
\begin{align}
\label{boundaryCond}
\partial \mathrm{ch}(\lambda) = \chi \mod \partial\Gamma.
\end{align}
So, a density $\rho$ on $\Gamma$ admits an antiderivative if and only if the $0$-chain $\chi$ is a relative boundary modulo $\partial\Gamma$. If the set $\partial\Gamma$ is non-empty, then we have $\Hom_0(\Gamma, \partial\Gamma ,\R) = 0$, so the chain $\chi$ is always a relative boundary, which proves the first statement of the proposition. Further, if $\partial\Gamma$ is empty, then $\chi$ is a boundary if and only if $\eps(\chi) = 0$ where $\eps$ is the augmentation map $\eps\left(\sum c_i v_i\right) = \sum c_i$. We have
$
\eps(\chi) = -\rho(\Gamma),
$
which proves the second statement.  To prove the third statement, note that since the mapping $\mathrm{ch}$ identifies
appropriate functions   $\lambda \colon \Gamma \, \setminus \, V \to \R$  and $1$-chains on $\Gamma$, it  
 identifies antiderivatives of $\rho$ with the set of $1$-chains having property~\eqref{boundaryCond}. Now, to complete the proof, it suffices to note that the latter set is an affine space with underlying vector space $\Hom_1(\Gamma, \partial\Gamma ,\R)$.
\end{proof}

\medskip

\subsection{Classification of coadjoint orbits}\label{sect:coadj-sdiff}
Let $M$ be a compact connected surface, possibly with boundary. Assume that $M$ is endowed with a symplectic 
form $\omega$.
Recall that the regular dual $\SVect^*(M)$ of the Lie algebra $\SVect(M)$ of divergence-free
vector fields on a surface $M$  is identified with the space $\Omega^1(M) / \diff \Omega^0(M)$ 
of smooth $1$-forms modulo exact $1$-forms on $M$. 
The coadjoint action of a $\SDiff(M)$ on $\SVect^*(M)$ is given by the change of coordinates in (cosets of) 1-forms on $M$ by means of a symplectic diffeomorphism:
$
\Ad^*_\Phi \,[\oneform] = [\Phi^*\oneform].
$
In what follows, the notation $[\alpha]$ stands for the coset of 1-forms 
$\alpha$ in $\Omega^1(M) / \diff \Omega^0(M)$. In particular, if the form $\alpha$ is closed, then $[\alpha]$ is the cohomology class of $\alpha$.
\par
To describe orbits of the coadjoint action of $\SDiff(M)$ on $\SVect^*(M) $, consider the mapping
$
\mathfrak{curl} \colon \Omega^1(M) \,/\, \diff \Omega^0(M) \to C^\infty(M)
$
given by taking the vorticity function
$$
\mathfrak{curl}[\oneform] := \frac{\diff \oneform}{\omega}.
$$
(One can view this map as taking the vorticity  of a vector field $u=\alpha^\sharp$.)\begin{remark}\label{imageOfD}
If the boundary of $M$ is non-empty, then the mapping $\mathfrak{curl}$ is surjective. Otherwise, its image is the space of functions with zero mean.
\end{remark}
An important property of the mapping $\mathfrak{curl}$ is its equivariance with respect to the $\SDiff(M)$ action: if cosets $[\oneform], [\oneformtwo] \in \SVect^*(M)$ belong to the same coadjoint orbit, then the functions $\mathfrak{curl}[\oneform]$ and $  \mathfrak{curl}[\oneformtwo]$ are conjugated by a symplectic diffeomorphism. In particular, if  $\mathfrak{curl}[\oneform]$ is a simple Morse function, then  so is $  \mathfrak{curl}[\oneformtwo]$.

\begin{definition}
{\rm
We say that a coset of 1-forms $[\oneform] \in \SVect^*(M)$ is \textit{Morse-type} if $\mathfrak{curl}[\oneform]$ is a simple Morse function.
A coadjoint orbit $\orbit \subset \SVect^*(M)$ is \textit{Morse-type} if any coset $[\oneform] \in \orbit$ is Morse-type (equivalently, if at least one coset $[\oneform] \in \orbit$ is Morse-type).
}
\end{definition}
\par
Let $[\oneform] \in \SVect^*(M)$ be Morse-type, and let $F = \mathfrak{curl}[\oneform]$.   
Consider the measured Reeb graph $\Gamma_F$. Since $\mathfrak{curl}$ is an equivariant mapping, this graph is invariant under the coadjoint action of $\SDiff(M)$ on $\SVect^*(M)$. However, this invariant is not complete if $M$ is not simply connected (i.e., if $M$ is neither a sphere $S^2$, nor a disk $D^2$). To construct a complete invariant, we  endow the graph $\Gamma_F$ with a \textit{circulation function} constructed as follows. Let $\pi \colon M \to \Gamma_F$ 
be the natural projection. Take any point $x$ lying in the interior of some edge 
$e \in \Gamma_F$. Then $\pi^{-1}(x)$ is a circle $\ell_x$. It is naturally oriented 
as the boundary of the set of smaller values. The integral of $\oneform$ 
over $\ell_x$ does not depend on the choice of a representative $\oneform \in [\oneform]$.
Thus, we obtain a function 
$
\circulation \colon \Gamma_F\, \setminus \, V( \Gamma_F) \to \R
$
given by 
\begin{align}\label{cosetCirculation}
\circulation(x) := \int\nolimits_{\ell_x} \!\!\oneform\,.
\end{align}
Note that in the presence of a metric on $M$, the value $\circulation(x)$ is the circulation over the level 
$\pi^{-1}(x)$ of the vector field  $\alpha^\sharp$ dual to the 1-form $ \oneform$.
\begin{proposition}\label{circIsAntiDer}
For any Morse-type coset $[\alpha] \in \SVect^*(M)$, the function $\circulation$ given by formula~\eqref{cosetCirculation} is an antiderivative of the density $\rho(I) := \int_I f \diff \mu$ in the sense of Definition \ref{circProperties}.
\end{proposition}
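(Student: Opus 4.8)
The plan is to derive all three defining properties of an antiderivative from Stokes' theorem applied to the $\pi$-preimages in $M$ of subgraphs of $\Gamma_F$, combined with the two structural identities $\diff \oneform = F\omega$ (the definition of $F = \mathfrak{curl}[\oneform]$) and $\mu = \pi_*\omega$ (the definition of the measure on $\Gamma_F$), together with $F = f\circ\pi$. First I note that $\circulation(x) = \int_{\ell_x}\oneform$ is independent of the representative $\oneform \in [\oneform]$, since replacing $\oneform$ by $\oneform + \diff g$ changes the integral by $\int_{\ell_x}\diff g = 0$; so I may fix a smooth representative once and for all. I would then verify properties ii), i), iii) in this order, since i) follows most cleanly once the Newton--Leibniz formula is available.

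For property ii), take interior points $x,y$ of an edge $e$ with $f(x) < f(y)$, and let $A = \pi^{-1}([x,y])$, a cylinder in $M$ bounded by $\ell_x$ and $\ell_y$. Stokes' theorem gives $\int_A \diff \oneform = \int_{\partial A}\oneform$. With the convention that each level circle is oriented as the boundary of the set of smaller values, the oriented boundary of $A$ is $\ell_y - \ell_x$ (the top circle enters with a $+$, since $A$ lies on its smaller-value side, and the bottom one with a $-$), so the right-hand side is $\circulation(y) - \circulation(x)$. The left-hand side is $\int_A F\omega = \int_A (f\circ\pi)\,\omega = \int_{[x,y]} f\,\diff\mu = \rho([x,y])$ by the change-of-variables formula for the pushforward measure $\mu = \pi_*\omega$. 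This is exactly \eqref{stokes}.

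For property i), I would use that $\mu$ is a finite measure, its total mass being $\int_M\omega$, so that $\rho$ is a finite signed measure because $f$ is bounded. Fixing an interior reference point $x_0$ on an edge incident to a vertex $v$, formula \eqref{stokes} expresses $\circulation(x)$ as $\circulation(x_0)$ plus or minus the $\rho$-measure of the interval between $x_0$ and $x$; as $x \to v$ this value converges to the finite measure of the corresponding half-open interval up to $v$, so the one-sided limit exists and is finite along every edge at every vertex. At a minimum or maximum the same conclusion follows geometrically: $\ell_x$ bounds a shrinking disk $D_x$, so $\circulation(x) = \int_{D_x} F\omega \to 0$, which moreover shows the limit there equals $0$.

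The main difficulty is property iii) at a $3$-valent vertex $v$, where both finiteness and the precise balance of limits must be controlled near the saddle. Here I would avoid delicate estimates on the convergence of level curves to the figure-eight by working with a shrinking region: for small $\delta > 0$ let $R_\delta$ be the pair-of-pants component of $\{\,|F - f(v)| \le \delta\,\}$ containing the saddle, bounded by the trunk-side level circle and the two branch-side level circles. Applying Stokes' theorem to $R_\delta$ and reading off the boundary orientations exactly as in step ii), the quantity $\int_{R_\delta} F\omega$ equals the signed combination of the three circulations which, after matching signs against the trunk/branch incidence at $v$ (both the merging and the splitting case work identically), is precisely the Kirchhoff defect $\sum_{e\to v}\circulation^{\pm} - \sum_{e\leftarrow v}\circulation^{\pm}$ evaluated at the levels $f(v) \pm \delta$. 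As $\delta \to 0$ the regions $R_\delta$ decrease to the singular level, which has $\omega$-measure zero, so $\int_{R_\delta} F\omega \to 0$ by continuity of the measure and boundedness of $F$; in the limit this yields the Kirchhoff rule \eqref{3valentcirc}. Finally, at $1$-valent non-boundary vertices the Kirchhoff rule is automatic, since the unique one-sided limit there is $0$ and the other side of \eqref{3valentcirc} is an empty sum.
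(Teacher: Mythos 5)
Your proof is correct and takes essentially the same route as the paper, whose entire proof is the one-line remark that the claim ``follows from the Stokes formula and additivity of the circulation integral.'' Your argument---Stokes' theorem on the cylinder $\pi^{-1}([x,y])$ for the Newton--Leibniz property, finiteness of the signed measure $\rho$ for the existence of one-sided limits, and Stokes' theorem on a shrinking pair-of-pants around the saddle (whose $\omega$-measure tends to zero) for the Kirchhoff rule---is precisely that sketch carried out in full detail, including the sign bookkeeping for the orientation convention that the paper leaves implicit.
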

\begin{remark}
This density $\rho$ is the pushforward of the vorticity $2$-form $\diff[\alpha]$ from the surface to the Reeb graph.
\end{remark}
\begin{proof}[Proof of Proposition \ref{circIsAntiDer}]
The proof is straightforward and follows from the Stokes formula and additivity of the circulation integral.
\end{proof}
\begin{definition}
Let $(\Gamma, f, \mu)$ be a measured Reeb graph. A \textit{circulation function} $\circulation$ on $\Gamma$ is an antiderivative of the density $\rho(I) := \int_I f \diff \mu$.
A measured Reeb graph endowed with a circulation function is called a \textit{circulation graph}. 
\end{definition}
So, with any Morse-type coset $[\alpha] \in \SVect^*(M)$ we associate a circulation graph. Denote this graph by $\Gamma_{[\alpha]}$.
\begin{theorem}\label{thm4} \label{thm:sdiffM}
Let $M$ be a compact connected symplectic  surface. Then Morse-type coadjoint orbits of $\SDiff(M)$ are in one-to-one correspondence with circulation graphs  
$(\Gamma, f, \mu, \circulation)$ compatible with $M$.
In other words, the following statements hold:
\begin{longenum}
\item For a symplectic surface $M$  Morse-type cosets $[\oneform], [\oneformtwo] \in \SVect^*(M)$ 
 lie in the same orbit of the $\SDiff(M)$ coadjoint action if and only if circulation graphs $\Gamma_{[\oneform]}$ and $\Gamma_{[\oneformtwo]}$ corresponding to these cosets are isomorphic.
\item For each circulation graph $\Gamma$ which is compatible\footnote{See Definition \ref{def:compatible} for compatibility of a graph and a surface.}
 with $M$, there exists a Morse-type $[\oneform] \in   \SVect^*(M)$ such that  $\Gamma_{[\oneform]} =(\Gamma, f, \mu, \circulation)$.
\end{longenum}
\end{theorem}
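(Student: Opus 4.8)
The plan is to prove the two statements separately, reducing both to a study of the fibers of the circulation map over a fixed vorticity function. Fix a Morse-type coset and write $F = \mathfrak{curl}[\oneform]$. The set $\mathcal A_F := \{[\oneform] \in \SVect^*(M) : \mathfrak{curl}[\oneform] = F\}$ of cosets with this vorticity is an affine space: if $\mathfrak{curl}[\oneform] = \mathfrak{curl}[\oneformtwo]$ then $\oneform - \oneformtwo$ is closed, so $\mathcal A_F$ is a torsor over the de Rham group $\Hom^1(M;\R)$. By Proposition \ref{circIsAntiDer} the circulation $\circulation$ is an antiderivative of the density $\rho(I) = \int_I f\,\diff\mu$, and by Proposition \ref{circFuncs} such antiderivatives form an affine space over $\Hom_1(\Gamma,\partial\Gamma;\R)$. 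The heart of the argument is to understand the resulting affine map $\Psi \colon \mathcal A_F \to \{\text{antiderivatives of } \rho\}$, $[\oneform]\mapsto \circulation$, together with the action of the stabilizer $\Stab(F)\subset\SDiff(M)$ of the function $F$.

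For the forward direction of (i) I would first check that the circulation graph is a coadjoint invariant. If $[\oneformtwo] = \Ad^*_\Phi[\oneform] = [\Phi^*\oneform]$ for a symplectomorphism $\Phi$, then equivariance of $\mathfrak{curl}$ gives $F_\oneformtwo = \Phi^* F_\oneform$, so by Theorem \ref{thm:sdiff-functions} the map $\Phi$ descends to an isomorphism of measured Reeb graphs. Under this isomorphism a level circle $\ell_x$ of $F_\oneformtwo$ is carried to the level circle $\Phi(\ell_x)$ of $F_\oneform$, with matching orientations (both oriented as boundaries of sublevel sets, and $\Phi$ preserves $\omega$). Since $\int_{\ell_x}\Phi^*\oneform = \int_{\Phi(\ell_x)}\oneform$ and exact forms contribute nothing over cycles, the circulation functions correspond under the induced isomorphism, so the circulation graphs are isomorphic.

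For the backward direction of (i) and for existence (ii), I would use Theorem \ref{thm:sdiff-functions} to reduce to the case where the two cosets already have the same vorticity $F$ and the graph isomorphism is the identity, so that $\gamma := \oneform - \oneformtwo$ is closed with $\int_{\ell_x}\gamma = 0$ for every $x$. The linear part of $\Psi$ is the map $\Hom^1(M;\R)\to \Hom_1(\Gamma,\partial\Gamma;\R)$ sending $[\gamma]$ to the function $x\mapsto \int_{\ell_x}\gamma$; for closed $\gamma$ this function is locally constant on edges (Newton--Leibniz with $\diff\gamma = 0$) and satisfies the Kirchhoff rule at vertices, hence is a homogeneous antiderivative. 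A dimension count ($\dim\Hom^1(M;\R) = 2g+b-1$ against $\dim\Hom_1(\Gamma,\partial\Gamma;\R) = g+b-1$, with $g$ the genus and $b$ the number of boundary components) shows this map has a $g$-dimensional kernel; its surjectivity, which I would establish by exhibiting for each independent cycle of $\Gamma$ a closed form with a prescribed jump, yields existence (ii): starting from any $[\oneform_0]\in\mathcal A_F$ (which exists by surjectivity of $\mathfrak{curl}$, Remark \ref{imageOfD}, the mean-value condition being guaranteed by $\rho(\Gamma)=0$ in the closed case) one adds a closed form to attain any prescribed circulation.

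The \emph{main obstacle} is to identify the fibers of $\Psi$ with the $\Stab(F)$-orbits: whenever $\gamma = \oneform - \oneformtwo$ is closed with all level periods zero, one must produce $\Phi\in\Stab(F)$ with $[\Phi^*\oneformtwo]=[\oneform]$. I would do this through the flux homomorphism. A symplectic isotopy preserving $F$ is generated by a field $X_t$ tangent to the levels, which forces $i_{X_t}\omega = -G_t\,\diff F$ with $G_t$ a function of $F$ on each cylinder $\pi^{-1}(e)$; hence $\Flux(\Phi)=-\big[\,(\int_0^1 G_t\,\diff t)\,\diff F\,\big]$ has zero period on every level circle (since $\diff F$ vanishes there), so $\Flux(\Stab(F)_0)$ lands in the annihilator of the level-circle classes, which is exactly the kernel of the linear part of $\Psi$. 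The crux is the reverse inclusion: constructing, for each of the $g$ independent cycles of $\Gamma$, a fiber-preserving isotopy whose flux realizes a basis of this $g$-dimensional annihilator; matching dimensions then forces equality, so $[\gamma]$ is a flux class and the associated $\Phi$ transports $[\oneformtwo]$ to $[\oneform]$. For surfaces with boundary I expect to use either a relative version of flux or the symplectic-cut reduction already employed for Theorem \ref{thm:sdiff-functions}. This flux surjectivity is the genuinely delicate step, since it requires controlling fiber-preserving symplectomorphisms across the critical levels where the cylinders $\pi^{-1}(e)$ are glued.
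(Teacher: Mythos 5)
Your overall architecture --- invariance by equivariance of $\mathfrak{curl}$, reduction via Theorem \ref{thm:sdiff-functions} to two cosets with the same vorticity $F$ and trivial graph isomorphism, a dimension count for the surjectivity needed in (ii), and the identification of the remaining problem as realizing every cohomology class annihilating the level-circle classes by the flux of an $F$-preserving symplectic isotopy --- is exactly the skeleton of the paper's proof. But the step you yourself flag as ``the genuinely delicate step'' is precisely the mathematical content of the theorem, and your proposal does not prove it: you only say you would construct, for each of the $g$ independent cycles of $\Gamma$, a fiber-preserving isotopy realizing a basis of the annihilator, and you correctly observe that the obstruction is smoothness across the critical levels, where cylinder-by-cylinder twists need not glue. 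Stating the difficulty is not the same as resolving it; as written, this is a genuine gap, and the dimension-matching argument cannot substitute for it, since matching dimensions only converts a spanning set of realized fluxes into equality --- it is the spanning set itself that is missing.

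The paper closes this gap with Lemma \ref{graphCohomology}, which produces all the required representatives at once rather than cycle by cycle. If $[\gamma]$ has vanishing periods over all $F$-levels, then $[\gamma]$ lies in the image of $\Hom^1(\Gamma_F,\R) \to \Hom^1(M,\R)$; choosing a cochain representative on the Reeb graph, one builds a function $h$ on $\Gamma_F$ that is locally a smooth function of $f$ (in particular a single smooth function of $f$ in a neighborhood of each vertex), vanishes for $f$ near $0$, and has prescribed edge integrals $\int_e h \,\diff f$. Its lift $H = h \circ \pi$ to $M$ is then automatically smooth across the critical levels --- this is what dissolves your gluing problem --- the $1$-form $H\,\diff F$ is closed with $[H \,\diff F] = [\gamma]$, and the ratio $H/F$ is smooth. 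The time-independent field $X = (H/F)\,\omega^{-1}\diff F$ is symplectic, preserves the levels of $F$, is tangent to $\partial M$ (boundary components are $F$-levels), and a one-line Moser computation gives $[\Psi_1^*\oneform - \oneform] = [H\,\diff F] = [\gamma]$ for its time-one flow $\Psi_1$; in particular no relative flux theory and no symplectic cutting is needed in the boundary case. So your strategy is the correct one, but completing it requires precisely this lemma (or an equivalent explicit construction), and that is where the actual work of the proof lies.
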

We start with the following preliminary lemma.

\begin{lemma}\label{graphCohomology}
Let $M$ a connected oriented surface possibly with boundary, and let $F$ be a simple Morse function on $M$. 
Assume that $[\gamma] \in  \Hom^1(M,\R)$ is such that the integral of $\gamma$ over 
any $F$-level vanishes. Then there exists a $C^\infty$ 
function $H \colon M \to \R$ constant on the boundary $\partial M$ such that the $1$-form $H\diff F$ is closed, and its cohomology class 
is equal to $[\gamma]$. Moreover, $H$ can be chosen in such a way that the ratio
$H/F$ is a smooth function.
\end{lemma}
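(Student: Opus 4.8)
The plan is to build $H$ in the special form $H = F\cdot G$, where $G$ is a smooth function that is constant on the $F$-levels, and to choose $G$ by prescribing finitely many periods. This Ansatz is convenient because it makes almost all of the requirements automatic. If $G = g\circ \pi$ is constant on $F$-levels, then $\diff G\wedge \diff F = 0$, so $\diff(H\diff F)=\diff(FG)\wedge\diff F = F\,\diff G\wedge \diff F=0$ and $H\diff F$ is closed; moreover $H/F = G$ is smooth by construction, and $H=FG$ is smooth and vanishes wherever $F=0$. If in addition $G$ is supported away from $\partial M$, then $H$ vanishes near the boundary and is in particular constant (equal to $0$) on each boundary component. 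Thus the entire problem reduces to producing such a $G$ with $[FG\,\diff F]=[\gamma]$ in $\Hom^1(M,\R)$.

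First I would carry out the cohomological reduction. The projection $\pi\colon M\to \Gamma_F$ collapses each $F$-level $\ell_x=\pi^{-1}(x)$ to a point, and by the standard Reeb-graph description of the homology of a simple Morse function, $\pi_*\colon \Hom_1(M,\R)\to \Hom_1(\Gamma_F,\R)$ is surjective with kernel spanned by the classes of the level circles. Dually, $\pi^*\colon \Hom^1(\Gamma_F,\R)\to \Hom^1(M,\R)$ is injective with image the annihilator of the level circles, i.e.\ exactly the classes integrating to zero over every $F$-level. Hence the hypothesis on $\gamma$ says precisely that $[\gamma]=\pi^*[\bar\gamma]$ for some $[\bar\gamma]\in \Hom^1(\Gamma_F,\R)$. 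Since $\diff F$ vanishes along any level, every form $FG\,\diff F$ also has zero period over each level circle, so the equality $[FG\,\diff F]=[\gamma]$ becomes equivalent to matching the periods of the two sides over a basis of $\Hom_1(\Gamma_F,\R)$.

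Next I would construct $G$ explicitly by localized bumps, one per independent cycle. Fix a spanning tree of $\Gamma_F$ and let $e_1,\dots,e_b$ be the remaining edges, $b=\dim \Hom_1(\Gamma_F,\R)$; their fundamental cycles $\sigma_1,\dots,\sigma_b$ form a basis of $\Hom_1(\Gamma_F,\R)$, and $\sigma_k$ contains no non-tree edge other than $e_k$. On each $e_j$ choose a closed subinterval lying in the open edge, on which $f$ has a fixed sign, and a nonnegative smooth bump $\beta_j$ supported there with $\int_{e_j} f\,\beta_j(f)\,\diff f\neq 0$. Define $G_j:=\beta_j\circ F$ on the open cylinder $C_{e_j}:=\pi^{-1}(\mathring e_j)$ and $G_j:=0$ elsewhere. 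The support of $G_j$ is compactly contained in the open set $C_{e_j}$, away from all critical points and from $\partial M$, so $G_j$ is smooth (a composition of smooth maps, extended by zero across a region where it is flat) and constant on levels. Computing the period with a lift of $\sigma_k$ lying over the graph cycle, which meets $C_{e_j}$ only when $j=k$, one gets $\int_{\sigma_k} FG_j\,\diff F=\delta_{jk}\,(\pm\int_{e_j} f\beta_j\,\diff f)$, a nonzero multiple of the Kronecker delta. Writing $G=\sum_j c_j G_j$, the scalars $c_j$ are then uniquely determined by $\int_{\sigma_k} FG\,\diff F=\langle[\gamma],\sigma_k\rangle$; since the $\{\sigma_k\}$ together with the level circles span $\Hom_1(M,\R)$ and all periods now agree, $H:=FG$ satisfies $[H\diff F]=[\gamma]$ with all required properties.

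The main obstacle, and the whole reason for the $H=FG$ Ansatz, is smoothness at the $3$-valent (saddle) vertices. A function constant on $F$-levels is in general only continuous there: forcing $H$ itself to be smooth across a saddle, and to have a smooth quotient $H/F$ across the zero level, would impose delicate infinite-order matching conditions on the three edge-germs. The factorization $H=FG$ makes $H/F$ regularity free, and—crucially—supporting each building block $G_j$ inside a cylinder over the \emph{interior} of an edge keeps it away from every critical point, so that each $G_j$ is smooth by inspection and the matching problem never arises. The only place the hypothesis on $\gamma$ is used is the homological step identifying the admissible classes with $\mathrm{im}\,\pi^*$.
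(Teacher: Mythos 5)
Your proposal is correct and takes essentially the same approach as the paper's proof: both use the hypothesis to identify $[\gamma]$ with a class coming from $\Hom^1(\Gamma_F,\R)$, and then realize that class by lifting to $M$ a function on the Reeb graph that is locally a smooth function of $f$ and supported in edge interiors, away from the vertices and from the zero level of $f$ (the paper prescribes the integrals $\int_e h\,\diff f$ over all edges of a representing cochain, while you match periods over a spanning-tree basis of cycles). Your $H=FG$ Ansatz and the diagonal period system are simply an explicit implementation of the step the paper settles with ``obviously, such a function does exist.''
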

\begin{proof}
Since the integral of $[\gamma]$ over any connected component of any $F$-level vanishes, 
the cohomology class $[\gamma]$ on $M$ belongs to the image of the inclusion
$
i \colon \Hom^1(\Gamma_F, \R) \to \Hom^1(M, \R).
$
Let $\alpha$ be a $1$-cochain, i.e. a real-valued function on edges, on the graph $\Gamma_F$ representing the cohomology class $i^{-1}[\gamma]$. Recall that the graph $\Gamma_F$ is endowed with a function $f$ that is the pushforward  of the function $F$.
Consider a continuous function $ h \colon \Gamma_F \to \R$ such that 
\begin{longenum}
\item it is a smooth function of $f$ in a neighborhood of each point $x \in \Gamma_F$;
\item it vanishes if $f$ is sufficiently close to zero;
\item for each edge $e$, we have
$
\alpha (e) = \int_e  h\diff f\,.
$
\end{longenum}
Obviously, such  a function does exist. Now, lifting $ h$ to $M$, we obtain a smooth function $H$ with 
the desired properties.
\end{proof}
\begin{proof}[Proof of Theorem \ref{thm4}]
For the first statement, it is immediate by construction, that  if cosets $[\oneform]$ and $ [\oneformtwo]$ 
lie in the same $\SDiff(M)$-orbit then their circulation graphs $\Gamma_{[\oneform]}$ and $\Gamma_{[\oneformtwo]}$ 
are isomorphic.
To prove the converse statement let $\phi \colon \Gamma_{[\oneform]} \to \Gamma_{[\oneformtwo]}$ be an isomorphism 
of circulation graphs. By Theorem \ref{thm:sdiff-functions}, $\phi$ can be lifted to a symplectomorphism 
$ \Phi \colon M \to M$ that  maps the function $F = \mathfrak{curl}[\oneform]$ 
to the function $G = \mathfrak{curl}[\oneformtwo]$. Therefore, the $1$-form $\oneformthree$ defined by
$$
\oneformthree :=  \Phi^*\oneformtwo - \oneform 
$$
is closed. 
\par
Assume that $ \Psi \colon M \to M $ is a symplectomorphism which maps the function $F$ to itself and is isotopic to the identity. Then the composition $\widetilde\Phi = \Phi\circ\Psi^{-1}$ maps $F$ to $G$, and
$$
[\widetilde\Phi^*\oneformtwo - \oneform] = [\Phi^*\oneformtwo - \Psi^*\oneform] = [\gamma] - [\Psi^*\oneform - \oneform].
$$
We claim that $\Psi$ can be chosen in such a way that $\widetilde\Phi^*\oneformtwo - \oneform$ is exact, i.e.  one has the equality of the cohomology classes
$
 [\Psi^*\oneform - \oneform] = [\gamma].
$
We construct $\Psi$ using a version of the Moser path method. Let us show that there exists a time-independent symplectic vector field  $X$ which is tangent to the boundary $\partial M$,
 preserves $F$, and satisfies
\begin{align}\label{homotopy2}
 [\Psi_t^*\oneform - \oneform] = t[\gamma]\,,
\end{align}
where $\Psi_t$ is the phase flow of $X$. 
Differentiating \eqref{homotopy2} with respect to $t$, we get in the left-hand side
$$
 [\Psi_t^*L_X\oneform] =  [L_X\oneform] =  [i_X\diff \oneform] = [F\cdot i_X \omega]\,,
$$
since $L_X \oneform$ is closed and $\Psi_t^*$ does not change its cohomology class. Thus
\begin{align}\label{homology2}
[F\cdot i_X \omega] = [\gamma]\,.
\end{align}
Since $\Phi$ preserves the circulation function, the integrals of $\oneformthree$ over all $F$-levels vanish. Therefore, by Lemma \ref{graphCohomology}, there exists a smooth function $H$ such that
$
[\gamma] = [H\diff F].
$
Now we set 
$$
X :=  \frac{H}{F} \,\omega^{-1}\diff F\,.
$$
It is easy to see that the vector field $X$ is symplectic, preserves the levels of $F$, and satisfies the equation \nolinebreak \eqref{homology2}. Therefore, its phase flow satisfies the equation \eqref{homotopy2}, and then for the symplectomorphism $\widetilde\Phi = \Phi \circ \Psi_1^{-1}$, where $\Psi$ is the time-one map $\Psi_1$, we have $\widetilde\Phi^*[\beta] = \alpha$, as desired.
\par
Now, let us prove the second statement. By Theorem \ref{thm:sdiff-functions}, there exists a simple Morse function $F \colon M \to \R$ such that the measured Reeb graph of $F$ is $(\Gamma, f, \mu)$. 
Since the graph $\Gamma$ admits a circulation function, from Proposition \ref{circFuncs} and Remark \ref{imageOfD} it follows that the function $F$ lies in the image of the mapping $\mathfrak{curl}$, i.e. there exists a $1$-form $\alpha \in \Omega^1(M)$ such that $\mathfrak{curl}[\alpha] = F$. 
Further, if $\gamma$ is a closed $1$-form, then $\mathfrak{curl}[\alpha + \gamma] = F$ as well. 
For any $1$-form $\tilde\alpha$ such that  $\mathfrak{curl}[\tilde\alpha] = F$, let $\circulation_{\tilde\alpha}$ denote the corresponding circulation function on $\Gamma$. Consider the mapping
$
\rho \colon \Hom^1(M,\R) \to \Hom_1(\Gamma,\partial\Gamma, \R) 
$
given by
$$
\rho[\gamma] := \mathrm{ch}(\circulation_{\alpha + \gamma}) - \mathrm{ch}(\circulation_{\alpha})\,,
$$
where $\mathrm{ch}(\circulation)$ is given by \eqref{1chain}. Let us show that the homomorphism $\rho$ is surjective. Indeed, $\rho$ can be written as
$$
\rho[\gamma] = \sum_{e} \left(\int_{{\ell(e)}}\!\!\gamma\right) e\,,
$$
where $\ell(e) = \pi^{-1}(x_e)$ and $x_e \in e$ is any interior point of the edge $e$. 
Therefore, the kernel of the homomorphism $\rho$ consists of those cohomology classes 
which vanish on cycles homologous to regular $F$-levels, 
and hence $\dim \Ker \rho = \varkappa$, where $\varkappa$ is the genus of the surface $M$. On the other hand, comparing the dimensions of the spaces involved, we have
\begin{align}\label{dimH1M}
\dim  \Hom^1(M,\R) = \begin{cases}
2\varkappa, \mbox{ if $M$ is closed,}\\
2\varkappa + k -1, \mbox{ if $M$ is not closed}\
\end{cases}
\end{align}
where $k$ is the number of boundary components of $M$ or, equivalently, the number of boundary vertices of $\Gamma$. Further, from the exact sequence of the pair $(\Gamma, \partial\Gamma)$ we find that
\begin{align}\label{dimRelativeHomology}
\dim  \Hom_1(\Gamma,\partial\Gamma, \R) = \begin{cases}
\varkappa, \mbox{ if } k = 0\,,\\
\varkappa + k -1, \mbox{ if } k >0.
\end{cases}
\end{align}

Since $\dim  \Hom^1(M,\R) - \dim  \Hom_1(\Gamma,\partial\Gamma, \R)= \varkappa$, by the dimensional argument the homomorphism $\rho$ is indeed surjective. This implies that one can find a closed 1-form $\gamma$ such that 
$
\rho[\gamma] = \mathrm{ch}(\circulation) - \mathrm{ch}(\circulation_{\alpha})\,,
$
where $\circulation$ is a given circulation function on $\Gamma$, and therefore 
$
 \circulation_{\alpha + \gamma} = \circulation,
$
as desired.
\end{proof}


\section{Steady fluid flows and coadjoint orbits}

\subsection{Orbits admitting steady flows}

\begin{definition}
Let $(M, \omega)$ be a compact symplectic surface, possibly with boundary. We say that a coadjoint orbit $\pazocal O \subset \SVect^*(M)$ \textit{admits a steady flow} if there exists a metric $g$ on $M$ compatible with the symplectic structure $\omega$ and such that the corresponding Euler equation has a steady solution on $\pazocal O$.
\end{definition}
Below we give a necessary and sufficient condition for a Morse-type coadjoint orbit to admit a steady flow. Recall that coadjoint orbits of $\SDiff(M)$ are described by measured Reeb graphs endowed with a circulation function. 
 Let $\Gamma$ be a measured Reeb graph, and let $\circulation$ be a circulation function on $\Gamma$. Let also $v \in V(\Gamma)$ be a $3$-valent vertex of $\Gamma$, and let $e_0, e_1, e_2$ be the edges of $\Gamma$ adjacent to $v$. Denote
\begin{align}\label{limitCirculation}
c_i(v) := \lim\nolimits_{{x \xrightarrow[]{e_i} v}}\, \circulation(x).
\end{align}
\begin{definition}
Let $\Gamma$ be a measured Reeb graph. We say that a circulation function  $\circulation$ on $\Gamma$ is \textit{balanced}
if for any $3$-valent vertex $v \in \Gamma$, the numbers $c_0(v)$, $c_1(v)$, $c_2(v)$ 
defined by formula~\eqref{limitCirculation} are non-zero and have the same sign.
\end{definition}
\begin{remark}
Note that if $e_0$ is the trunk of $v$, and $e_1, e_2$ are branches of $v$, then, by the Kirchhoff rule \eqref{3valentcirc}, we have $c_0(v) = c_1(v) + c_2(v)$, so $c_0(v)$, $c_1(v)$, $c_2(v)$ are non-zero and have the same sign if and only if  $c_1(v)$ and $c_2(v)$ are non-zero and have the same sign.
\end{remark}

\begin{theorem}\label{thm:steady}
Let $(M, \omega)$ be a compact  symplectic surface, possibly with boundary, and let  $\pazocal O \subset \SVect^*(M)$ be a Morse-type coadjoint orbit. Then $\pazocal O$ admits a steady flow if and only if the corresponding circulation function is balanced.
\end{theorem}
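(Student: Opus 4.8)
The plan is to recast the statement in terms of the steady triples of Definition \ref{defn:steady-triple} and to compute the orbit circulation \eqref{cosetCirculation} explicitly in terms of the stream function. A steady flow on $\pazocal O$ is encoded by a triple $(\alpha, J, H)$ with $H = H(F)$, $\diff\alpha = F\omega$, and $\alpha = J^*\diff H$ (the last equality follows from $J^*\alpha = -\diff H$ and $(J^*)^2 = -\id$ on $1$-forms). First I would establish the key identity: for such an $\alpha$, writing $\ell_x = \pi^{-1}(x)$ and letting $\diff s$ be the $g$-arclength,
\[
\circulation(x) = \int_{\ell_x}\alpha = \int_{\ell_x}J^*\diff H = \sigma\, H'(f(x))\,L(x), \qquad L(x) := \int_{\ell_x}|\nabla F|_g\,\diff s > 0,
\]
where $\sigma = \pm 1$ is a global sign. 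One checks that $J^*\diff H(T) = H'(F)\,\diff F(JT)$ and that, with $\ell_x$ oriented as the boundary of the set of smaller values, $JT = \sigma N$ with $N$ the unit normal toward increasing $F$; a short check shows the orientation flip at a max-type level compensates the normal flip, so $\sigma$ is genuinely uniform across all edges. Thus on each edge the sign of $\circulation$ is $\sigma\cdot\sgn{H'(f)}$.

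For necessity, assume $\pazocal O$ carries a steady flow and fix a $3$-valent vertex $v$ with critical value $c = f(v)$ and edges $e_0,e_1,e_2$. As $x\to v$ along $e_i$, the level $\ell_x$ converges to the corresponding loop of the figure-eight critical level; since $|\nabla F|_g$ is continuous and vanishes only (integrably) at the saddle point, $L(x)\to L_i(v)\in(0,\infty)$, while $H'(f(x))\to H'(c)$ as $H$ is a smooth function of $F$. Hence $c_i(v) = \sigma H'(c)L_i(v)$ with the \emph{same} formula on all three edges, so the three limits share the sign of $\sigma H'(c)$ and are nonzero precisely when $H'(c)\neq 0$. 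Finally the vorticity normalization, rewritten as $\Delta_g H = -F$, forbids $H'(c)=0$: evaluating it at the saddle point $p$ (where $\nabla F = 0$) gives $H'(c)\,\Delta_g F(p) = -c$, which forces $H'(c)\neq 0$ when $c\neq 0$; the borderline case $c=0$ is handled by comparing $H'(0)\Delta_g F(q) + H''(0)|\nabla F(q)|_g^2 = 0$ at regular points $q$ of the critical level, where $|\nabla F(q)|_g$ varies. Therefore $\circulation$ is balanced.

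For sufficiency, assume $\circulation$ is balanced. I would first use balancedness to choose a single smooth $H = H(F)$ with $\sgn{\sigma H'(F)} = \sgn{\circulation}$ near every critical value; this single-valued choice across each singular level is possible \emph{exactly because} the signs of $\circulation$ on all edges meeting at each $3$-valent vertex agree. It then remains to produce a compatible metric (conformal structure $J$) with $L(x) = \sigma\circulation(x)/H'(f(x)) > 0$ and $\Delta_g H = -F$. On the cylinder $M_e$ over an edge, working in coordinates $(F,\theta)$ with $\omega = w(F)\,\diff F\wedge\diff\theta$ and a rotationally symmetric metric $g = A\,\diff F^2 + B\,\diff\theta^2$ with $\sqrt{AB}=w$, the equation $\Delta_g H = -F$ integrates to $\tfrac{w}{A}H' = -\!\int\! wF\,\diff F + \mathrm{const}$, in which the free constant is exactly the antiderivative freedom in $\Hom_1(\Gamma,\partial\Gamma,\R)$; one then selects it to match $\circulation$ and solves $A = 2\pi\sigma w H'/\circulation > 0$, feasible precisely when $\sigma\circulation/H'>0$. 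Extrema ($1$-valent vertices) are realized by the rotationally symmetric disk model.

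The hard part will be the global assembly near the singular levels: one must glue the edge-wise metrics into a single smooth metric $g$ compatible with $\omega$ through each saddle, so that $u = \sgrad H$ and $g$ extend smoothly across the critical point where $u$ vanishes while preserving both $\Delta_g H = -F$ and the prescribed circulation. The log-smooth singularity of $\mu$ at $3$-valent vertices (Definition \ref{measureproperty}) dictates the asymptotics of $L(x)$ and of $w(F)$ as $f\to c$, and matching these on the trunk and on the two branches is where the balanced condition is indispensable: it is exactly the compatibility that lets a smooth $H(F)$ and a smooth compatible metric coexist at the saddle.
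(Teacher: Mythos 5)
Your proposal rests, in both directions, on the premise that a steady flow on $\pazocal O$ is encoded by a triple $(\alpha, J, H)$ with a \emph{globally defined, single-valued} stream function $H = H(F)$. That premise is false, and for the sufficiency direction it is fatal. Balancedness constrains the signs of $\circulation$ only on edges meeting at a common $3$-valent vertex; it does not make $\sgn \circulation$ a function of the level $f$. Concretely, take $M$ to be a pair of pants (genus $0$, three boundary circles) and $F$ with a single saddle at level $0$, two boundary components at level $-1$ and one at level $+1$, so that $\Gamma_F$ is a $Y$ whose two lower edges carry $f < 0$. Choosing the antiderivative with branch limits $c_1(v) = -\eps$ and $c_2(v) = -M$, with $\eps>0$ small and $M$ large (possible, since the space of antiderivatives here is two-dimensional), gives a balanced circulation function; but since $\diff \circulation = f \,\diff\mu$ with $f<0$ below $v$, the circulation crosses zero on the first lower edge while remaining negative on the second. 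So at a common level the two edges carry opposite signs of $\circulation$, and by your own key identity (the paper's Lemma \ref{lem:sign}) no single-valued $H(F)$ can satisfy $\sgn (\sigma H'(f)) = \sgn \circulation$ on both --- yet this orbit does admit a steady flow by the theorem. The paper's construction therefore replaces the global $H(F)$ by an edge-dependent stream function, i.e.\ a $1$-form $\beta$ on the Reeb graph; but then steadiness $L_u[\alpha]=0$ is no longer automatic and becomes the global exactness of $f\beta$ over cycles of $\Gamma$. Establishing this is the real content of the proof: the combinatorial Proposition \ref{step5} together with Lemma \ref{nonCycleProp} (balancedness enters a second time, to exclude directed cycles of the sign function), the analytic gluing across figure-eight levels with prescribed same-sign circulations (Proposition \ref{step3}, Appendix A), and the cylinder interpolation (Proposition \ref{step4}). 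Your final paragraph defers exactly this (``the hard part will be the global assembly''), so the sufficiency direction is not proved.

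The necessity direction has the same defect plus a local one, though here it is repairable. Since the given steady flow need not admit any global $H(F)$ (see the example above), you must localize: near a figure-eight level $\pazocal E$ a stream function does exist (the paper verifies exactness of $i_u\omega$ there in Lemma \ref{constDirection}), but on the three sectors adjacent to $\pazocal E$ it is a priori given by three \emph{different} functions $h_0(F), h_1(F), h_2(F)$; your computation then gives $c_i(v) = \sigma\, h_i'(c)\, L_i(v)$, and you still owe the argument that the three limits $h_i'(c)$ exist and coincide (or at least share a sign). This is precisely the paper's Lemma \ref{streamMorse}, proved via the Morse--Darboux coordinates of Lemma \ref{MDL}. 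Moreover, your nondegeneracy step fails at the borderline value $c=0$: assuming $H'(0)=0$, your relation at regular points of the critical level yields only $H''(0)=0$, which is not yet a contradiction. Lemma \ref{streamMorse} closes this uniformly in $c$ by comparing the full quadratic jet of $-\Delta H$ with that of $\zeta(PQ)$ and using positive-definiteness of the metric; the paper then sidesteps $H'$ altogether in the final step by parametrizing the loops by flow time and writing $c_i(v) = \int_{-\infty}^{+\infty}(u,u)\,\diff t$, which makes the sign claim immediate.
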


\begin{corollary}\label{cor:no3val}
If the Reeb graph of the vorticity function does not have  $3$-valent vertices, then the orbit admits a steady flow. \end{corollary}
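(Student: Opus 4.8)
The plan is to deduce this directly from Theorem \ref{thm:steady}, which characterizes the Morse-type orbits admitting a steady flow as exactly those whose circulation function $\circulation$ is balanced. The entire content therefore reduces to checking that balancedness holds automatically under the stated hypothesis, so that the analytic construction of a compatible metric and a steady solution can simply be inherited from the proof of Theorem \ref{thm:steady}.

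First I would recall that, by Proposition \ref{circIsAntiDer}, every Morse-type coset carries a well-defined circulation function $\circulation$ on the Reeb graph $\Gamma$ of its vorticity, so that the notion of balancedness is meaningful for the orbit $\orbit$. Next I would observe that the definition of a balanced circulation function is a condition quantified \emph{only} over the $3$-valent vertices of $\Gamma$: it requires that at each such vertex $v$ the three limits $c_0(v), c_1(v), c_2(v)$ be nonzero and of a common sign. If $\Gamma$ has no $3$-valent vertices, this universally quantified condition is vacuously satisfied, independently of the particular $\circulation$. Hence $\circulation$ is balanced, and Theorem \ref{thm:steady} produces a metric $g$ compatible with $\omega$ for which the Euler equation has a steady solution on $\orbit$.

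To make the statement concrete, I would finally identify the surfaces to which it applies. By Definition \ref{RGDef} every vertex of a Reeb graph is $1$- or $3$-valent, so a connected Reeb graph with no $3$-valent vertices has all vertices $1$-valent and is therefore a single edge joining two $1$-valent endpoints; according to whether these endpoints are boundary or critical vertices, the surface is the sphere, the disk, or the annulus, i.e.\ $\Gamma$ arises from a vorticity without interior critical points. This recovers the laminar-flow picture in perfect matching with the result of \cite{ChSv}. I do not expect any genuine obstacle here: the only point to notice is the purely logical one that ``balanced'' is a condition at $3$-valent vertices and hence is vacuous when none are present, all the substantive work having already been carried out in the proof of Theorem \ref{thm:steady}.
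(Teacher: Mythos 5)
Your proof is correct and is exactly the argument the paper intends: the corollary is an immediate consequence of Theorem \ref{thm:steady}, since balancedness is a condition quantified only over $3$-valent vertices and is therefore vacuously satisfied when none exist. Your concluding identification of the possible graphs and surfaces (a single edge; sphere, disk, or cylinder) matches the paper's own remark following the corollary, so nothing is missing.
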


Note the there are only four such graphs: a segment with no, either, or two marked points. They correspond to 
vorticity functions with one maximum and one minimum on the sphere, disk, or cylinder. All corresponding 
orbits have steady flows for the metric of constant curvature. Once the vorticity function has saddle points, the existence 
conditions become quite non-trivial.

\begin{example}[see Theorem 5.6 of  \cite{GK}]\label{GKEx}
Assume that $M$ is diffeomorphic to a disk $D^2$. Let $\pazocal O \subset \SVect^*(M)$ be a Morse-type coadjoint orbit such that  the corresponding vorticity function $F$ has a local minimum and a local maximum in  $M\, \setminus \, \partial M$. Then, if $F > 0$ in $M\, \setminus \, \partial M$, the orbit $\pazocal O$ does not contain steady solutions of the Euler equation.\par
Indeed, let $(\Gamma, f, \mu, \circulation)$ be the circulation graph corresponding to the orbit $\pazocal O$. Note that the graph $\Gamma$ has exactly one boundary vertex (since $M$ is a disk), at least one non-boundary $1$-valent vertex with an incoming edge (local maximum of the vorticity), and at least one non-boundary $1$-valent vertex with an outgoing edge (local minimum of the vorticity). Using these three conditions, one can easily show that there exists a directed path $\ell$ in $\Gamma$ which joins two non-boundary $1$-valent vertices (see Figure \ref{path1}). 

 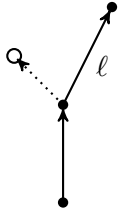
\begin{figure}[t]
 \centering
\begin{tikzpicture}[thick, scale = 1.3]
\node [vertex] (A) at (5,0.5) {};
\node[vertex] (B) at (5.5,0) {};
\node[vertex] (C) at (6,1) {};
\node[vertex] (D) at (5.5,-1) {};
\node () at (5.9, 0.4) {$\ell$};
    \draw (A) circle [radius=2pt];
    \fill (B) circle [radius=1.5pt];
        \fill (C) circle [radius=1.5pt];
            \fill (D) circle [radius=1.5pt];
    \draw [a, dotted] (B) -- (A);
        \draw [a] (B) -- (C);
            \draw [a] (D) -- (B);
\end{tikzpicture}
\caption{A directed path $\ell$ joining a local minimum with a local maximum.}\label{path1}
\end{figure}Let us consider the behavior of the circulation function $\circulation$  along the path $\ell$. First, the Newton-Leibniz formula \eqref{stokes} and positivity of the vorticity imply that $\circulation$ is strictly increasing at all points of $\ell$ except, possibly, $3$-valent vertices, where  $\circulation$ has jump discontinuities. Secondly, by the Kirchhoff rule \eqref{3valentcirc}, the function $\circulation$ vanishes at endpoints of $\ell$. Together, these two conditions imply that the function $\circulation$ must change sign at some $3$-valent vertex $v \in \ell$. So, by Theorem~\ref{thm:steady}, the orbit $\pazocal O$ does not admit steady solutions of the Euler equation, q.e.d.\par
Note that the assumption that $M$ is a disk is not important. It suffices to require that the boundary $\partial M$ of $M$ has one connected component.

\end{example}

Below we prove that if a Morse-type coadjoint orbit admits a steady flow, then the corresponding circulation function is balanced. The converse statement is proved in the next section. 
We begin with three preliminary lemmas. The first of them is known as the Morse--Darboux lemma. This lemma is a particular case of {Le lemme de Morse isochore} due to Colin de Verdi{\`e}re and Vey \cite{CDV}.

\begin{lemma}\label{MDL}
Assume that $(M, \omega)$ is a symplectic surface,   and let $F \colon M \to \R$ be a smooth function. 
Let also $O \in M \, \setminus \, \partial M$ be a Morse critical point of $F$. Then, in a neighborhood of $O$, 
there exist coordinates $(P,Q)$ such that  $\omega = \diff P \wedge \diff Q$, and $F = \zeta(S) $ 
where $ S  = \frac{1}{2}(P^2 + Q^2)$ or $ S = PQ$. The function $\zeta$ of one variable is smooth in the neighborhood 
of the origin $0\in\R$, and $\zeta'(0) \neq 0$.
\end{lemma}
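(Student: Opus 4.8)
The plan is to combine the classical Morse lemma with a Moser-type normalisation of the area form that respects the foliation by level sets, the reparametrisation $\zeta$ emerging at the end from the symplectic volume. Assume $F(O)=0$. First I would apply the ordinary Morse lemma to $F$, producing coordinates $(x,y)$ centred at $O$ in which $F=S_0$, where $S_0=\tfrac12(x^2+y^2)$ in the elliptic case (up to a global sign, for a maximum) and $S_0=xy$ in the hyperbolic case. In these coordinates $\omega=\rho\,\diff x\wedge\diff y$ for a smooth non-vanishing density $\rho$, and after a harmless linear adjustment we may take $\rho(0)>0$. The task is thereby reduced to producing a germ of diffeomorphism $\Psi$ that preserves the foliation $\{S_0=\mathrm{const}\}$, carries $\omega$ to $\diff P\wedge\diff Q$, and sends $S_0$ to $\zeta^{-1}(S_0)$ for a one-variable function $\zeta$ with $\zeta'(0)\neq0$; this is exactly the assertion $F=\zeta(S)$ with $S$ the standard quadratic in $(P,Q)$.

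Next I would make $\omega$ basic with respect to the foliation. Let $R_s$ be the one-parameter group preserving $S_0$ — rotations in the elliptic case, the hyperbolic flow $(x,y)\mapsto(e^{s}x,e^{-s}y)$ in the hyperbolic case — and average $\rho$ along it to obtain a density $\bar\rho$ constant on the levels of $S_0$, so that $\omega':=g(S_0)\,\diff x\wedge\diff y$ with $g(S_0)=\bar\rho$. Since $R_s$ preserves $\diff x\wedge\diff y$ and the sublevel sets, the average leaves the fibred masses $\int_{\{a\le S_0\le b\}}\omega$ unchanged; hence $\omega-\omega'$ integrates to zero over every sublevel set. Passing to standard action--angle variables $(S_0,\theta)$ of the model form $\diff x\wedge\diff y$, the difference $\omega-\omega'$ acquires a fibre-tangent primitive $h\,\diff S_0$, whose existence is equivalent to the vanishing of the $\theta$-mean of $\rho-\bar\rho$ along each leaf — precisely what the averaging guarantees. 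Feeding this primitive into Moser's equation $i_{X_t}\omega_t=-(\omega-\omega')$ with $\omega_t=(1-t)\omega'+t\,\omega$ yields a time-dependent field $X_t$ tangent to the leaves, whose flow preserves the foliation and conjugates $\omega$ to $\omega'$. This reduces matters to the basic form $g(S_0)\,\diff x\wedge\diff y$.

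It remains to normalise a basic form by a purely radial substitution, which is where $\zeta$ is pinned down. In the elliptic case set $\zeta^{-1}(s):=\frac{1}{2\pi}\int_{\{S_0\le s\}} g(S_0)\,\diff x\wedge\diff y=\int_0^s g(\sigma)\,\diff\sigma$; then $(\zeta^{-1})'(s)=g(s)$ and $g(0)=\rho(0)\neq0$, so $\zeta$ is a smooth germ with $\zeta'(0)=1/g(0)\neq0$. Writing $I:=\zeta^{-1}(S_0)$ one gets $\omega'=g(S_0)\,\diff S_0\wedge\diff\theta=\diff I\wedge\diff\theta$, so $(I,\theta)$ are action--angle coordinates for $\omega'$; the substitution $P:=\sqrt{2I}\cos\theta$, $Q:=\sqrt{2I}\sin\theta$ carries $\omega'$ to $\diff P\wedge\diff Q$ and satisfies $\tfrac12(P^2+Q^2)=I=\zeta^{-1}(F)$, i.e. $F=\zeta(S)$. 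The hyperbolic case is parallel, with $S=PQ$, the compact averaging replaced by its hyperbolic analogue, and $\zeta$ recovered from the $\omega$-area swept between two level hyperbolae.

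The main obstacle is \emph{smoothness at the critical point} itself (and, in the saddle case, across the separatrix through $O$). Away from $O$ every object above — the average $\bar\rho$, the Moser primitive, the radial rescaling, and the angle $\theta$ — is manifestly smooth, being assembled from regular level sets and a free local symmetry; the delicate assertion is that all of these extend smoothly through the singular fibre, where the foliation degenerates and $\theta$ is undefined. Establishing this requires Hadamard-type division lemmas relative to the Morse singularity — a function flat on the levels of $S_0$ is smoothly divisible, with uniform control across the critical value — which is exactly the technical heart of the isochore Morse lemma of Colin de Verdi\`ere and Vey \cite{CDV}. At this final step I would invoke their result rather than reproduce the division estimates.
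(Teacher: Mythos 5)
The paper offers no proof of this lemma at all: it is stated as a known result, ``a particular case of \emph{Le lemme de Morse isochore} due to Colin de Verdi\`ere and Vey \cite{CDV}'', and since your argument also defers its technical heart --- smoothness across the singular fibre and, in effect, the entire saddle case --- to that very same reference, your proposal is essentially the paper's approach preceded by an expository Morse-lemma-plus-Moser-averaging sketch. The sketch is sound as an outline of the elliptic case (where the circle action makes averaging and the action--angle normalization work, with Hadamard's lemma handling smoothness of $P=\lambda(S_0)x$, $Q=\lambda(S_0)y$ at $O$), but note that the ``hyperbolic analogue'' of compact averaging does not exist in any naive sense --- the level-preserving flow $(x,y)\mapsto(e^{s}x,e^{-s}y)$ is non-compact and admits no invariant mean, and the area between level hyperbolae has logarithmic behavior --- so in the saddle case you are genuinely invoking the full strength of \cite{CDV}, not merely its division estimates, which is exactly what the paper does.
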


Now, let $[\alpha] \in \SVect^*(M)$ be a fixed point of the Euler equation, and let $\alpha \in [\alpha]$ be 
a co-closed representative, a co-closed 1-form $\alpha$ in this coset. The co-closedness condition implies
that the corresponding fluid velocity field $u = \alpha^\sharp$ is divergence-free, i.e. symplectic or locally Hamiltonian, 
and the corresponding locally defined Hamiltonian is called the stream function.
Let also $O \in M$ be a hyperbolic Morse critical point of the vorticity $F = \mathfrak{curl}[\alpha]$. 
Then in a neighborhood of $O$ there exists a stream function $H$, such that $i_u \omega = \diff H$.

\begin{lemma}\label{streamMorse}
Assume that $(M, \omega)$ is a symplectic surface,   and let $[\alpha] \in \SVect^*(M)$ be a fixed point of the Euler equation. 
Let also $O \in M \, \setminus \, \partial M$ be a hyperbolic Morse critical point of the vorticity $F = \mathfrak{curl}[\alpha]$. 
Then $O$ is also a hyperbolic Morse critical point for the stream function $H$. 
Moreover, in coordinates $P, Q$ from Lemma \ref{MDL}, the Taylor expansion of $H$ at $O$ reads
$ H =  a + b PQ + \dots\,,$
where $a,b$ are constants, $b \neq 0$, and dots denote higher-order terms.
\end{lemma}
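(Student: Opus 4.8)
The plan is to extract from the steady condition the single scalar equation that $H$ must satisfy in the hyperbolic Morse--Darboux coordinates of Lemma \ref{MDL}, read off the $2$-jet of $H$ from it, and then rule out degeneracy by exploiting that $F$ is a second-order \emph{elliptic} expression in $H$.

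First I would translate steadiness into a relation between $F$ and $H$. Since $[\alpha]$ is a fixed point of the Euler equation, $L_u\alpha$ is exact, and as $L_u\alpha = i_u\diff\alpha + \diff(i_u\alpha)$ this forces $i_u\diff\alpha = i_u(F\omega) = F\, i_u\omega = F\diff H$ to be exact as well. In particular $F\diff H$ is closed, so $\diff F\wedge\diff H = 0$, i.e. $F$ and $H$ have common level curves. In the coordinates $(P,Q)$ of Lemma \ref{MDL} we have $F = \zeta(S)$ with $S = PQ$ and $\zeta'(0)\neq 0$, so $\diff F = \zeta'(S)\diff S$ is a nonvanishing multiple of $\diff S$ near $O$; hence $\diff S\wedge \diff H = 0$, which is exactly the first-order equation
\[
P\,H_P - Q\,H_Q = 0
\]
in a neighborhood of $O$.

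Next I would determine the $2$-jet of $H$. Writing the Taylor series $H\sim\sum_{m,n}c_{mn}P^mQ^n$ and substituting into $P\,H_P - Q\,H_Q = 0$ gives $\sum (m-n)c_{mn}P^mQ^n = 0$, so $c_{mn}=0$ whenever $m\neq n$. Thus the Taylor series of $H$ is a power series in $S=PQ$. In particular the linear part vanishes, so $\diff H(O)=0$ and $O$ is a critical point of $H$, while the Hessian is $\mathrm{Hess}(H)(O) = c_{11}\bigl(\begin{smallmatrix}0&1\\1&0\end{smallmatrix}\bigr)$. Setting $a:=c_{00}$ and $b:=c_{11}$ already yields the claimed expansion $H = a + bPQ + \dots$, and reduces the lemma to proving $b\neq 0$: for $b\neq 0$ the Hessian is nondegenerate with eigenvalues $\pm b$, so $O$ is a hyperbolic Morse point of $H$.

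The crux, and the main obstacle, is the nonvanishing of $b$, since a priori both the size and the \emph{type} of $\mathrm{Hess}(F)(O)$ depend on the $4$-jet of $H$ and on the metric, so one cannot merely compare Hessians of $F$ and $H$. I would use that the assignment $H\mapsto F = \diff(u^\flat)/\omega$, with $u = X_H$, is a second-order linear operator whose frozen principal part at $O$ is $g_{QQ}(O)\partial_P^2 + g_{PP}(O)\partial_Q^2 - 2g_{PQ}(O)\partial_P\partial_Q$, a positive multiple ($\det g(O)$) of the inverse-metric Laplacian, hence elliptic with \emph{positive} diagonal symbol entries $g_{PP}(O), g_{QQ}(O) > 0$. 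Suppose $b=0$. Since $H$ then has no Taylor terms of degree $1$, $2$, or $3$, the degree-$2$ part $F_2$ of $F$ is produced solely by applying this frozen principal part to the degree-$4$ term $c_{22}(PQ)^2$ of $H$ (the lower-order part of the operator and the nonconstant part of $g^{ij}$ only raise the degree), giving
\[
F_2 = c_{22}\bigl(2g_{PP}(O)P^2 + 2g_{QQ}(O)Q^2 - 8g_{PQ}(O)PQ\bigr).
\]
On the other hand $F = \zeta(PQ)$ forces $F_2 = \zeta'(0)\,PQ$, whose $P^2$- and $Q^2$-coefficients both vanish; comparing coefficients and using $g_{PP}(O), g_{QQ}(O) > 0$ gives $c_{22}=0$, whence $F_2 = 0$, contradicting $\zeta'(0)\neq 0$. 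Therefore $b\neq 0$, and $O$ is a hyperbolic Morse critical point of $H$ with $H = a + bPQ + \dots$, as claimed. The only step requiring genuine care is the bookkeeping isolating $c_{22}$ as the sole contributor to $F_2$ when $b=0$, together with the observation that the positivity of the diagonal metric entries — not the full sign of the Hessian — is what breaks the degeneracy.
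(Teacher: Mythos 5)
Your proposal is correct and follows essentially the same route as the paper's proof: steadiness gives $\{F,H\}=0$, hence the Taylor series of $H$ in the Morse--Darboux coordinates is a series in $PQ$, and then assuming $b=0$ one compares the degree-two part of $F=\pm\Delta H$ (generated solely by the quartic term $c(PQ)^2$) with $\zeta'(0)PQ$, using positivity of the diagonal metric entries to force $c=0$ and contradict $\zeta'(0)\neq 0$. Your treatment only adds explicit jet bookkeeping where the paper says ``a straightforward computation shows.''
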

\begin{proof}
Since $[\alpha]$ is a fixed point for the Euler equation, we have $\{F,H\} = 0$, where $\{\,,\}$ is the Poisson bracket defined by the symplectic structure $\omega$. Applying Lemma \ref{MDL} to  the vorticity $F$ at the  point $O$, we find local coordinates $P, Q$ such that $\omega = \diff P \wedge \diff Q$, and $F = \zeta(PQ)$. Since $\zeta'(0) \neq 0$, we have $PQ = \zeta^{-1}(F)$, and
\begin{align*}
\{PQ, H\} =( \zeta^{-1})'(F) \cdot \{F,H\}= 0\,.
\end{align*}
The latter equation implies that the Taylor expansion of $H$ at $O$ reads
\begin{align}\label{streamFunctionExpansion}
H =  a + b PQ + cP^2Q^2 + \dots\,,
\end{align}
where $a, b,c \in \R$ are constants, and dots denote higher order terms. Assume that $b = 0$. Then
$
H = a + cP^2Q^2 + \dots,
$
and a straightforward computation shows that

\begin{align}\label{laplacian}
\begin{aligned}
F = -\Delta H =  8cg_{12}(O)PQ- 2cg_{11}(O)P^2 - 2cg_{22}(O)Q^2  + \dots\,,
\end{aligned}
\end{align}
where $\Delta$ is the Laplace-Beltrami operator, and $g_{ij}$ are components of the metric $g$ in $P,Q$ coordinates. 
On the other hand, we have
\begin{align*}
F = \zeta(PQ) =\zeta(0) + \zeta'(0)PQ + \dots\,.
\end{align*}
Comparing the latter formula with \eqref{laplacian}, we conclude that $c = 0$, so $\zeta'(0) = 0$, which is a contradiction. Therefore, our assumption is false, and we have $H = a + bPQ +\dots$, where $b \neq 0$, q.e.d.
\end{proof}

Now, recall that if $[\alpha] \in  \SVect^*(M)$ is a steady solution of the Euler equation, then the vorticity function $F = \mathfrak{curl}[\alpha]$ is a first integral for the fluid velocity field $u$. In other words, the fluid velocity field $u$ of a steady flow is tangent to every level of the vorticity $F$, and in particular, to hyperbolic figure-eight levels.

\begin{lemma}\label{constDirection}
Assume that $(M, \omega)$ is a compact symplectic surface, possibly with boundary, and let $[\alpha] \in  \SVect^*(M)$ be a fixed point of the Euler equation such that the corresponding vorticity $F= \mathfrak{curl} [\alpha]$ is a simple Morse function. Let also $\pazocal E$ be a hyperbolic figure-eight level of $F$, and let $O \in \pazocal E$ be the singular point. Then:
\begin{longenum}
\item the fluid velocity field $u$ does not vanish in $\pazocal E \,\setminus\, \{O\}$;
\item $u$ induces the same orientation\footnote{Recall that each level of $F$ is naturally oriented as the boundary of the set of smaller values. This allows us to compare the orientations induced by $u$ on different connected components of $\pazocal E \,\setminus\, \{O\}$. } on both connected components of $\pazocal E \,\setminus\, \{O\}$ (see Figure~\ref{steadyFlow2}).
\end{longenum}

\end{lemma}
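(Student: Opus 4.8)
The plan is to analyze the local dynamics of the steady flow near the hyperbolic critical point $O$ and propagate this information along the two loops of the figure-eight level $\pazocal E$. By Lemma \ref{streamMorse}, in the Morse--Darboux coordinates $(P,Q)$ adapted to the vorticity $F$, the stream function has the expansion $H = a + bPQ + \dots$ with $b \neq 0$. The fluid velocity field is the Hamiltonian field $u = X_H$ satisfying $i_u\omega = \diff H$, so near $O$ its leading part is $u \approx b(P\,\partial_P - Q\,\partial_Q)$, which is precisely a hyperbolic saddle with nonzero eigenvalues $\pm b$. The level $\pazocal E = \{F = F(O)\}$ is exactly the stable/unstable separatrix configuration $\{PQ = 0\}$, the union of the two coordinate axes, which locally looks like the two crossing branches of the figure-eight.

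For statement (i), the key observation is that on $\pazocal E \setminus \{O\}$ the velocity field $u$ can only vanish at a critical point of $H$ restricted to the level, but away from $O$ the saddle structure forces $u \neq 0$: explicitly, on the branch $Q = 0$ one has $u = bP\,\partial_P \neq 0$ for $P \neq 0$, and similarly on $P = 0$. Since $u$ is tangent to every regular level of $F$ and $F$ has no other critical points on $\pazocal E$ (the level is a simple Morse figure-eight with a single singular point $O$), $u$ cannot vanish anywhere on $\pazocal E \setminus \{O\}$. More carefully, I would argue that any zero of $u$ on a level set would be a critical point of the stream function $H$, and use the relation $\{F,H\}=0$ together with simplicity of $F$ to rule out additional zeros along the two loops.

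For statement (ii), the idea is to compare the orientation induced by $u$ against the canonical boundary orientation on each of the two components (lobes) of $\pazocal E \setminus \{O\}$. Near $O$, the four separatrix rays $\{P > 0, Q=0\}$, $\{P<0,Q=0\}$, $\{Q>0,P=0\}$, $\{Q<0,P=0\}$ carry the flow directions determined by the signs of the eigenvalues: along $Q=0$ the flow points outward (increasing $|P|$), along $P=0$ it points inward (decreasing $|Q|$). One lobe of the figure-eight is formed by gluing an outgoing ray to an incoming ray, and the second lobe by the other such pair. The claim is that the sign of $b$ is a single global quantity, so the ``same'' pairing of incoming/outgoing branches occurs on both lobes, forcing the velocity to circulate in a consistent direction relative to the canonical orientation on each. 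I would make this precise by checking that the boundary orientation of the sublevel set $\{F < F(O)\}$ is compatible with the flow direction on both lobes, using that the lobes lie on the same side (the $F<F(O)$ side) near $O$.

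The main obstacle will be statement (ii): matching the local picture at $O$ to a coherent global statement about orientations on the two lobes. The subtlety is that the figure-eight consists of two loops that both pass through $O$ but occupy different regions of $M$, and one must rule out the possibility that the flow circulates ``clockwise'' on one lobe and ``counterclockwise'' on the other relative to the canonical orientation. The resolution hinges on the fact that both lobes bound the same sublevel set $\{F < F(O)\}$ locally, so the canonical orientation (as boundary of smaller values) on each lobe is determined by which coordinate quadrants the lobe touches, and the single sign of $b$ then fixes the flow direction identically on both. I expect that carefully tracking the four quadrant labels and the induced boundary orientation, rather than any hard analysis, is where the real care is needed, and a figure (as referenced by \ref{steadyFlow2}) will be essential to organize the bookkeeping.
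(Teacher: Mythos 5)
Your overall route is the paper's: Lemma \ref{streamMorse} gives the saddle form $u \approx b\,(P\,\partial_P - Q\,\partial_Q)$ at $O$, and for part (ii) your observation that a single sign of $b$ governs both lobes is exactly the paper's comparison $u \approx \tfrac{b}{\zeta'(0)}\,X_F$ near $O$, since $X_F$ bears a fixed relation to the boundary orientation of sublevel sets on every regular level. That part is fine in substance --- but only granting part (i), which is what lets you transport the orientation from the chart at $O$ along the whole lobes.

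The genuine gap is in part (i). The sentence ``since $u$ is tangent to every regular level of $F$ and $F$ has no other critical points on $\pazocal E$, $u$ cannot vanish anywhere on $\pazocal E \setminus \{O\}$'' is a non sequitur: a field tangent to a regular level curve can perfectly well vanish at some of its points (e.g. $u = \lambda X_F$ with $\lambda$ vanishing somewhere is tangent to all levels). Your explicit computation $u = bP\,\partial_P \neq 0$ lives only in the Morse--Darboux chart, so it controls nothing far from $O$; and your ``more careful'' sentence names ingredients ($\{F,H\}=0$, simplicity of $F$) but not the mechanism that actually excludes far-away zeros. The missing step is a propagation argument: on the arcs one can write $i_u\omega = \lambda\,\diff F$ (both $1$-forms annihilate $X_F \neq 0$), and closedness of $i_u\omega$ --- incompressibility of $u$ --- gives $\diff \lambda \wedge \diff F = 0$, so $\lambda$ is constant on each connected arc; hence a single zero of $u$ on an arc forces $u \equiv 0$ along the whole arc, which then contradicts the non-vanishing near $O$ guaranteed by Lemma \ref{streamMorse}. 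The paper packages this by first building a single-valued stream function $H$ on a tubular neighborhood $U$ of the whole figure-eight level --- itself a point you pass over, since $u$ is only locally Hamiltonian and $U$ retracts onto a wedge of two circles; exactness of $i_u\omega$ on $U$ holds because tangency of $u$ to $\ell_1, \ell_2$ kills both periods --- and then argues: a zero of $\diff H$ at one point of an arc forces $\diff H \equiv 0$ on that whole arc, contradicting that $O$ is an isolated (Morse) critical point of $H$. Without this step your proof of (i) fails, and (ii) fails with it.
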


 \begin{figure}[t]
\centerline{
\begin{tikzpicture}[thick, scale = 1]
\node [vertex] (a) at (0,0) {};
\fill (a) circle [radius=1.5pt];
\begin{pgfinterruptboundingbox}
\draw [->--, -->-] (a) .. controls +(-3,2) and +(-3,-2) .. (a);
\draw  [->--, -->-]  (a) .. controls +(3,-2) and +(3,2) .. (a);
  \fill[opacity = 0.3] (0,0.3)  .. controls +(-3.4,2) and +(-3.4,-2) .. (0,-0.3);
    \fill[opacity = 0.3] (0,0.3)  .. controls +(3.4,2) and +(3.4,-2) .. (0,-0.3);
      \fill[color = white, opacity = 1] (-0.4, 0)  .. controls +(-2,1.3) and +(-2,-1.3) .. (-0.4, 0);
            \fill[color = white, opacity = 1] (0.4, 0)  .. controls +(2,1.3) and +(2,-1.3) .. (0.4, 0);
            \end{pgfinterruptboundingbox}
            \node () at (-2,0) {$\ell_1$};
                \node () at (2,0) {$\ell_2$};
                \node () at (0.1,0.3) {$O$};
               \node at (0,-1) {};
                 \node at (0,1) {};
\end{tikzpicture}
}
\caption{Neighborhood of a hyperbolic level of the vorticity function.}\label{steadyFlow2}
\end{figure}
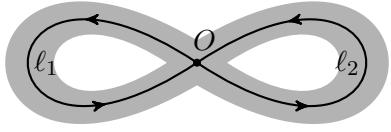
\begin{proof}
Consider a tubular neighborhood $U$ of $\pazocal E$, as depicted in Figure \ref{steadyFlow2}. Since the fluid velocity vector field $u$ is symplectic, the 1-form $\beta := i_u\omega$ is closed. Further, since $u$ is tangent to the arcs $\ell_1$ and $\ell_2$, we have
$
\beta\mid_{\ell_1} = \beta\mid_{\ell_2} = 0,
$
and, therefore,
\begin{align*}
\int_{\ell_1} \!\!\beta = \int_{\ell_2}\!\! \beta = 0\,,
\end{align*}
which implies that the 1-form $\beta$ is exact in $U$, i.e. $i_u\omega = \diff H$ for a suitable smooth function $H \colon U \to \R$ (the {stream function}).
\par
The first statement of the lemma is immediate: if one assumes that the field $u$ vanishes at some point  of 
$ \pazocal E \,\setminus \, \{O\}$, say on the arc  $ \ell_1$, then $\diff H$ vanishes at that point and hence on the whole 
arc $ \ell_1$, which contradicts   that $O$ is an isolated critical point of $H$.

\par
Now, let us prove the second statement. In a neighborhood of the point $O$, the fluid velocity field $u$ has the form

\begin{align*}
u =  \omega^{-1}\diff H = b\left(P \frac{\partial}{\partial p} -Q \frac{\partial}{\partial q} \right) + \dots \,,
\end{align*}

Similarly, we have
\begin{align*}
X_F = \omega^{-1}\diff F = \zeta'(0)\left(P \frac{\partial}{\partial p} -Q \frac{\partial}{\partial q} \right) + \dots \,,
\end{align*}
so, modulo higher order terms, we have 
$$
u \approx\frac{b}{\zeta'(0)}X_F\,.
$$
Now, note that the vector field $ X_ F$ induces the same orientation on $\ell_1$, $\ell_2$ (it is opposite to the natural orientation). Therefore, the same is true for the field $u$, q.e.d.

\end{proof}
\begin{proof}[Proof of the ``only if'' part of Theorem \ref{thm:steady}]
We need to prove that if a Morse-type coadjoint orbit admits a steady flow, then the corresponding circulation function is balanced. Assume that $[\alpha] \in  \SVect^*(M)$ is a steady solution of the Euler equation, and let $\alpha \in [\alpha]$ be a co-closed representative. Take any $3$-valent vertex $v$ of the circulation graph $\Gamma_{[\alpha]}$ corresponding to the coset $[\alpha]$. Let $e_0$ be the trunk of $v$, and let $e_1$, $e_2$ be the branches of $v$. Let also $\pazocal E$ be the figure-eight level of the vorticity function $F = \mathfrak{curl}[\alpha]$ corresponding to the vertex $v$ (see Figure~\ref{steadyFlow2}). Then for $i = 1,2$ we have
\begin{align*}
c_i(v) = \int_{\ell_i} \!\!\alpha\,,
\end{align*}
where $c_i(v)$ is defined by formula \eqref{limitCirculation}. According to Lemma \ref{constDirection}, either the direction of the fluid velocity field $u$ coincides with the natural orientation on both arcs $\ell_1$, $\ell_2$ of $\pazocal E$, or the field $u$ is opposite to the orientation of both arcs. In the case where orientations coincide, one can take the natural time parameter $t$ on integral trajectories of $u$ as a parameter on  $\ell_1$, $\ell_2$. Then, for $i=1,2$, we have
$$
c_i(v) =  \int_{\ell_i} \!\!\alpha =  \int\limits_{-\infty}^{+\infty} \alpha(\dot \ell_i(t)) \,\diff t =  \int\limits_{-\infty}^{+\infty} \alpha(u(t))\, \diff t = \int\limits_{-\infty}^{+\infty} (u(t),u(t))\,\diff t\,,
$$
where $(\,,)$ is the dot product given by the metric on $M$. So, in this case the numbers $c_i(v)$ are both positive. Similarly, in the case of opposite orientation of $u$ and the arcs, $c_i(v)$ are both negative.
Thus, in both cases the numbers $c_i(v)$ are  non-zero and have the same sign, as desired.
\end{proof}
\par
\medskip

\subsection{Construction of steady flows}
In this section we prove that if a circulation function corresponding to a Morse-type coadjoint orbit $\pazocal O \subset \SVect^*(M)$ is balanced, then the orbit $\pazocal O$ admits a steady flow, i.e. there exists a metric $g$ on $M$ compatible with the symplectic structure $\omega$ and such that the corresponding Euler equation has a steady solution on $\pazocal O$. We divide the proof into several statements.
The first lemma is well-known being a particular case of the Arnold-Liouville theorem:

\begin{lemma}\label{ALT}
Assume that $(C, \omega)$ is a compact symplectic cylinder, and let $F \colon C \to \R$ be a smooth function constant 
on the boundary of $C$. Assume also that $\diff F \neq 0$ on $C$.
Then there exist functions
$$
S \colon C \to \R, \quad \Theta \colon C \to S^1 = \R \, / \, 2\pi\Z,
$$
called the \textit{action-angle variables}, such that $\omega = \diff S\wedge \diff \Theta$, and $F = \zeta(S)$ where $\zeta$ is a smooth function in one variable such that $\zeta'(S) \neq 0$.
\end{lemma}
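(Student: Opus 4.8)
The plan is to carry out the standard one-degree-of-freedom action-angle construction, taking care of the global structure on the cylinder and the regularity up to the boundary. First I would establish the fibration structure. Since $\diff F \neq 0$ everywhere on $C$ and $F$ is constant on each boundary circle, the function $F$ has no critical points, so its extrema are attained exactly on $\partial C$; thus $F$ maps $C$ onto an interval $[a,b]$, with the two boundary circles mapping to the endpoints. Being a proper submersion onto an interval, $F \colon C \to [a,b]$ is a smooth fiber bundle by Ehresmann's theorem, and since the total space is a cylinder each fiber $F^{-1}(c)$ is a circle. The Hamiltonian field $X_F$ defined by $i_{X_F}\omega = \diff F$ is then nowhere zero (as $\omega$ is nondegenerate and $\diff F \neq 0$) and tangent to these level circles, so its flow is periodic on each level with a period $T(c) > 0$ depending smoothly on $c$, including up to $c = a,b$.

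Next I would introduce the two coordinates. For the \emph{angle}, I would fix a smooth section of the bundle (available since the cylinder is trivial) to set the flow time $t = 0$, and let $t$ be the time parameter of the $X_F$-flow along each level. In the coordinates $(t, F)$ one has $X_F = \partial_t$, and the defining relation $i_{X_F}\omega = \diff F$ forces $\omega = \diff t \wedge \diff F$ (indeed $i_{\partial_t}(\diff t \wedge \diff F) = \diff F$). I would then set $\Theta := \tfrac{2\pi}{T(F)}\,t \bmod 2\pi$, which is a genuine smooth $S^1$-valued function precisely because $t$ advances by exactly $T(F)$ over one loop of each level circle. For the \emph{action}, I would take $S := \sigma(F)$, where $\sigma$ is an antiderivative of $c \mapsto \pm T(c)/(2\pi)$ (equivalently, $S$ is $\tfrac{1}{2\pi}$ times the symplectic area between a reference level and the current level, the two definitions agreeing via the classical period--area relation $\tfrac{\diff}{\diff c}\!\int_{\{F<c\}}\omega = T(c)$). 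Since $\sigma'(c) = \pm T(c)/(2\pi) \neq 0$, the map $\sigma$ is a smooth diffeomorphism, and $F = \zeta(S) := \sigma^{-1}(S)$ with $\zeta'(S) \neq 0$, as required.

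It then remains to verify $\omega = \diff S \wedge \diff \Theta$, which is a direct computation: $\diff S = \sigma'\,\diff F$ and $\diff \Theta = \tfrac{2\pi}{T}\diff t - \tfrac{2\pi t\,T'}{T^2}\diff F$, so the $\diff F \wedge \diff F$ term drops and $\diff S \wedge \diff \Theta = \sigma' \tfrac{2\pi}{T}\,\diff F \wedge \diff t = -\sigma'\tfrac{2\pi}{T}\,\omega$, which equals $\omega$ once the sign of $\sigma'$ is fixed to match the orientation. I expect the genuine content, rather than this final algebra, to lie in the two global/boundary points: showing that $\Theta$ descends to a single smooth function on $S^1$ (this uses the triviality of the bundle to make a smooth global choice of the zero-time section, so that $t$ and hence $\Theta$ are globally defined) and that the period $T(c)$, and therefore $\sigma$, extends smoothly to the closed interval $[a,b]$. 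The latter holds because $F$ has no critical points near $\partial C$ and $\omega$ is smooth up to the boundary, so the flow time over a level, and thus $T$, is smooth up to $c = a,b$; this is the step I would treat most carefully.
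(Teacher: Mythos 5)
Your proposal is correct, and it is essentially the argument the paper relies on: the paper does not prove Lemma \ref{ALT} at all, but simply invokes it as a well-known particular case of the classical Arnold--Liouville theorem, and your construction (level circles as fibers, flow time of $X_F$ rescaled to an angle, action defined from the period via the period--area relation, then the two-line wedge computation) is precisely the standard proof of that theorem in one degree of freedom, carried out with appropriate care at the boundary. The only point worth stating more carefully is your appeal to Ehresmann's theorem: the usual statement assumes either no boundary or that the restriction to the boundary is a submersion, whereas here $\partial C$ consists of two whole fibers; the conclusion still holds (flow a vector field $V$ with $\diff F(V)=1$, which is automatically tangent-transverse in the right way since the boundary circles are level sets), but that one-line fix should be said rather than left implicit.
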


Recall that to construct a steady flow with a prescribed vorticity $F$ on a symplectic surface $(M, \omega)$ 
it suffices to define an $F$-steady triple $(\alpha, J, H)$ on $M$, see Definition \ref{defn:steady-triple}.
Here  $\alpha$ is a $1$-form, $J$ is an almost complex structure compatible with $\omega$, and $H = H(F)$ satisfying
$J^*\alpha = -\diff  H$ and $\diff \alpha = F  \omega$.

\begin{proposition}\label{step0}
Assume that $(C, \omega)$ is a symplectic cylinder, and let $F \colon C \to \R$ be a smooth function constant on the boundary of $C$. Assume also that $\diff F \neq 0$ on $C$. Then there exists an $F$-steady triple $(\alpha, J, H)$ on $C$. Moreover, for any $F$-level $\ell \subset C$ and any constant $c$, the form $\alpha$ can be chosen in such a way that
$
\int_{\ell} \alpha = c\,.
$

\end{proposition}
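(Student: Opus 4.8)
The plan is to reduce everything to the action–angle normal form supplied by Lemma \ref{ALT} and then write the steady triple down by hand. Since $\diff F \neq 0$ on the compact cylinder $C$ and $F$ is constant on $\partial C$, Lemma \ref{ALT} provides global action–angle coordinates $(S,\Theta)$, with $S$ ranging over a closed interval $[S_-,S_+]$ and $\Theta \in \R/2\pi\Z$, such that $\omega = \diff S \wedge \diff \Theta$ and $F = \zeta(S)$ with $\zeta'(S)\neq 0$ everywhere. The key structural point is that these coordinates, hence the coordinate fields $\partial/\partial S$ and $\partial/\partial \Theta$ and any tensor built from them, are defined globally and smoothly up to the boundary; this is what turns the local computation below into honest global objects on all of $C$.

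Next I would define the three ingredients explicitly. Writing the prescribed level as $\ell = \{S = S_\ell\}$, set
\[
\alpha := g(S)\,\diff \Theta, \qquad g(S) := \int_{S_\ell}^{S}\zeta(s)\,\diff s \;+\; \frac{c}{2\pi}.
\]
Then $\diff \alpha = g'(S)\,\diff S \wedge \diff \Theta = \zeta(S)\,\omega = F\omega$, and since $\ell$ is the $\Theta$-circle $\{S = S_\ell\}$ one gets $\int_\ell \alpha = 2\pi\, g(S_\ell) = c$; note that the additive constant $c/2\pi$ is invisible to $g'$, so it tunes the circulation while leaving $\diff \alpha = F\omega$ untouched, which is exactly the ``moreover'' clause. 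For the almost complex structure I would take the flat one adapted to the coordinates, $J\,\partial/\partial S = \partial/\partial \Theta$ and $J\,\partial/\partial \Theta = -\partial/\partial S$; then $J^2 = -\id$, and the metric $\omega(\cdot, J\cdot) = \diff S^2 + \diff \Theta^2$ is positive-definite, so $J$ is compatible with $\omega$ as required in Definition \ref{defn:steady-triple}.

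Finally I would produce the stream function and check the remaining relation. A direct evaluation gives $J^*\alpha = g(S)\,\diff S$, which is exact, so setting $H := -\int_{S_\ell}^{S} g(s)\,\diff s$ yields $J^*\alpha = -\diff H$, completing the triple. The construction has no deep obstacle, being essentially bookkeeping once Lemma \ref{ALT} is in hand; the two points that genuinely require care are, first, the globality and boundary-smoothness of the action–angle coordinates, so that $\alpha$, $J$ and $H$ are well-defined on the whole of $C$ rather than just locally, and second, the requirement that $H$ be a function of $F$ and not merely of $S$. The latter is precisely where the hypothesis $\zeta'(S)\neq 0$ enters: it makes $F = \zeta(S)$ a diffeomorphism onto its image, so that $S$ is a smooth function of $F$ and hence $H$ is a smooth function of $F$, as demanded by Definition \ref{defn:steady-triple}.
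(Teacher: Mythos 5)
Your proof is correct and is essentially identical to the paper's own argument: both pass to the action--angle coordinates of Lemma \ref{ALT}, take $\alpha = \eta(S)\,\diff\Theta$ with $\eta(S) = \frac{c}{2\pi} + \int \zeta$, the flat compatible $J$, and $H = -\int \eta(S)\,\diff S$. Your added remarks on the globality of the coordinates and on why $H$ is genuinely a function of $F$ (via $\zeta' \neq 0$) are accurate fillings-in of details the paper leaves implicit.
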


\begin{proof}
We work in action-angle coordinates $S, \Theta$ from Lemma \ref{ALT}. Without loss of generality, we may assume that $S = 0$ on $\ell$. By setting 
\begin{align}\label{stTripleCyl}
\alpha := \eta(S) \diff \Theta\,, \quad H := -\int \eta(S) \diff S\,,\quad\text{ and }\,\,
 J:= \left(\begin{array}{cc}0 & -1 \\1 & 0\end{array}\right)
\end{align}
we provide $J^*\alpha = -\diff H$ for any smooth function $\eta(S)$. Now by specifying this function to be
$$
\eta(S) := \frac{c}{2\pi} + \int\limits_0^S \zeta(\xi)\diff \xi
$$
we also obtain  
$  \diff \alpha = F  \omega $ for $F = \zeta(S)$, as well as $\int_{\ell} \alpha = c$,  
as desired. 
\end{proof}

\begin{proposition}\label{step1}
Let $(M, \omega)$ be a symplectic surface, and let $F \colon M \to \R$ be a smooth function on $M$. Assume that $O \in M\, \setminus \, \partial M$ is a non-degenerate minimum or maximum singular point of $F$. Then there exists an $F$-steady triple $(\alpha, J, H)$ in the neighborhood of $O$.
 \end{proposition}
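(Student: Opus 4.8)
The plan is to apply the Morse--Darboux lemma (Lemma \ref{MDL}) at the elliptic point $O$ and then adapt the explicit construction of Proposition \ref{step0} to polar coordinates that degenerate at the center. By Lemma \ref{MDL}, in a neighborhood of $O$ there exist coordinates $(P,Q)$ with $\omega = \diff P \wedge \diff Q$ and $F = \zeta(S)$, where $S = \tfrac12(P^2+Q^2)$ and $\zeta'(0)\neq 0$. Away from $O$ these are genuine action--angle coordinates: writing $\Theta$ for the angular coordinate, one has $\omega = \diff S \wedge \diff \Theta$, exactly as in Lemma \ref{ALT}. So the same ansatz as in Proposition \ref{step0} should work formally, and the only genuinely new point is smoothness across the center $O$, where $\diff\Theta$ blows up.

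Concretely, I would set $\alpha := \eta(S)\,\diff\Theta$ with $\eta(S):=\int_0^S \zeta(\xi)\,\diff\xi$. Since $\diff\alpha = \eta'(S)\,\diff S\wedge\diff\Theta = \zeta(S)\,\omega = F\omega$, the vorticity condition $\diff\alpha = F\omega$ holds immediately. For the almost complex structure I would take the standard $J$ determined by $J\partial_P = \partial_Q$ and $J\partial_Q = -\partial_P$; it is compatible with $\omega$ because the associated metric $\omega(\,\cdot\,,J\,\cdot\,)$ is the Euclidean metric $\diff P^2 + \diff Q^2$, which is positive-definite and smooth through $O$. Note that the ``action--angle'' complex structure used in Proposition \ref{step0}, for which $\partial_S \mapsto \partial_\Theta$, corresponds instead to the metric $\diff S^2 + \diff\Theta^2$ and is singular at $O$, so it cannot be reused here; this is precisely why the elliptic case requires a separate argument.

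It then remains to produce a function $H = H(F)$ with $J^*\alpha = -\diff H$. A direct computation gives $J^*\diff\Theta = \tfrac{1}{2S}\,\diff S$, so that $J^*\alpha = \tfrac{\eta(S)}{2S}\,\diff S$; I would therefore define $H$ by $\tfrac{\diff H}{\diff S} = -\tfrac{\eta(S)}{2S}$, i.e. $H(S) := -\int_0^S \tfrac{\eta(\sigma)}{2\sigma}\,\diff\sigma$, which yields $J^*\alpha = -\diff H$ as required. Because $\zeta'(0)\neq 0$, the map $S\mapsto F=\zeta(S)$ is a local diffeomorphism, so $S = \zeta^{-1}(F)$ is a smooth function of $F$ and hence $H$ is genuinely a smooth function of $F$, matching Definition \ref{defn:steady-triple}.

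The one genuine obstacle is smoothness at $O$, where both $\diff\Theta$ and $1/S$ are singular. I expect to resolve it by rewriting $\alpha = \tfrac{\eta(S)}{2S}\,(P\,\diff Q - Q\,\diff P)$ and observing that, since $\eta(0)=0$ and $\eta$ is smooth, the quotient $\eta(S)/2S$ extends to a smooth function of $S$ (by the division/Hadamard lemma), hence to a smooth function of $(P,Q)$ since $S$ is smooth; as the factor $P\,\diff Q - Q\,\diff P$ is smooth, $\alpha$ is smooth across $O$. The same vanishing $\eta(0)=0$ makes $\eta(\sigma)/2\sigma$ integrable at $0$ and makes $H$ smooth. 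Thus $(\alpha, J, H)$ is a well-defined $F$-steady triple on a neighborhood of $O$, which proves the proposition.
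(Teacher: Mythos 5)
Your proof is correct, and it is essentially the paper's own construction: apply the Morse--Darboux Lemma~\ref{MDL} at $O$ and reuse the cylinder ansatz of Proposition~\ref{step0} with $c=0$, i.e. $\alpha = \eta(S)\,\diff \Theta$ with $\eta(S)=\int_0^S\zeta(\xi)\,\diff \xi$. In fact your write-up is more careful than the paper's, which simply invokes formulas~\eqref{stTripleCyl} and does not spell out the promised ``independent proof of smoothness at $O$'': taken literally, the matrix $J$ of~\eqref{stTripleCyl} in the action-angle frame $(\partial_S,\partial_\Theta)$ corresponds to the metric $\diff S^2+\diff \Theta^2$, which is cylindrical and does not extend across $O$, whereas if one instead takes the standard complex structure in the $(P,Q)$-chart, the stream function $-\int \eta(S)\,\diff S$ of~\eqref{stTripleCyl} no longer satisfies $J^*\alpha=-\diff H$. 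Your two adjustments --- using the smooth rotation $J\partial_P=\partial_Q$, $J\partial_Q=-\partial_P$, and accordingly replacing $H$ by $-\int_0^S\frac{\eta(\sigma)}{2\sigma}\,\diff \sigma$ since $J^*\diff \Theta=\frac{1}{2S}\,\diff S$ --- together with the Hadamard-lemma observation that $\eta(0)=0$ makes $\alpha=\frac{\eta(S)}{2S}\left(P\,\diff Q-Q\,\diff P\right)$ and $H$ smooth across $O$, are exactly what is needed to make the paper's two-line proof rigorous. No gaps.
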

 
\begin{proof} This proposition can be thought of as a modification of the one above for $c=0$, but requires 
an independent proof of smoothness at $O$. 
By Lemma \ref{MDL}, there exist local coordinates $P,Q$ in the neighborhood of $O$ such that 
$\omega = \diff P \wedge \diff Q$ and $F = \zeta(S)$ where $S=\frac{1}{2}(P^2 + Q^2)$. 
We use the polar coordinates $(\sqrt {2S}, \Theta)$  related to $(P,Q)$ as action-angle variables, 
 $\omega = \diff P \wedge \diff Q=\diff S \wedge \diff \Theta$. 
Then we set $$\eta(S) := \int_0^S \zeta(\xi) \diff \xi\,,$$
and define a steady triple $(\alpha, J, H)$ using formulas \eqref{stTripleCyl}. 
\end{proof}

 \medskip
 
Before we turn to constructing $F$-steady triples near hyperbolic levels, let us point out the following 
key property of the sign of $ \diff H/\diff F $.  

\begin{lemma}\label{lem:sign}
For a steady flow $\alpha$ with a stream function $H=H(F)$  expressed in terms of vorticity $F$, the sign of the derivative 
of the stream function with respect to vorticity is minus the sign of the circulation function. More precisely,  
for any nonsingular $F$-level $\ell\subset M$
$$
 \sgn \diffFXp{H}{F} = -\sgn \int_{\ell} \alpha\,.
$$
\end{lemma}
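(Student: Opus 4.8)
The plan is to reduce the circulation integral $\int_\ell\alpha$ to a manifestly sign-definite quantity, exploiting that for a steady flow the velocity field $u=\alpha^\sharp$ is tangent to every $F$-level. Since $\alpha=u^\flat=g(u,\cdot\,)$, and on a nonsingular level $\ell$ the field $u$ is tangent to $\ell$, I would pick the unit tangent $T$ to $\ell$ in its natural orientation (as the boundary of the set of smaller values). Then $u=g(u,T)\,T$, so that $\int_\ell\alpha=\int_\ell g(u,T)\,\diff s$, and the whole problem reduces to determining the sign of the tangential component $g(u,T)$ along $\ell$.

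To extract that sign I would evaluate the stream-function relation on the outward unit normal $n$ to $\ell$. First I record the sign convention: the identity $i_u\omega=\diff H$ is forced by $\alpha=u^\flat$, the compatibility $\omega(V,JW)=g(V,W)$, and the steady-triple relation $J^*\alpha=-\diff H$; indeed, using $J^2=-\mathrm{id}$ one gets $J^*\alpha(W)=g(u,JW)=\omega(u,J^2W)=-\omega(u,W)$, whence $i_u\omega=\diff H$. Since $H=H(F)$ we have $\diff H=\diffFXp{H}{F}\,\diff F$, and evaluating on $n$, which points along $\grad F$ (increasing $F$), gives
\[
\omega(u,n)=\diffFXp{H}{F}\,\diff F(n)=\diffFXp{H}{F}\,g(\grad F,n)=\diffFXp{H}{F}\,|\grad F|\,.
\]
On the other hand, tangency $u=g(u,T)\,T$ yields $\omega(u,n)=g(u,T)\,\omega(T,n)=-g(u,T)\,\omega(n,T)$. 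With the natural-orientation convention $\omega(n,T)>0$ (outward normal followed by boundary tangent is a positively oriented frame for $\omega$), combining the two expressions gives
\[
g(u,T)=-\,\diffFXp{H}{F}\,\frac{|\grad F|}{\omega(n,T)}\,.
\]

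The conclusion then follows by integrating along $\ell$: on a nonsingular level $F$ is constant, hence $\diffFXp{H}{F}$ is constant along $\ell$, while $|\grad F|>0$ and $\omega(n,T)>0$ everywhere on $\ell$. Thus the integrand $g(u,T)$ has constant sign equal to $-\sgn\diffFXp{H}{F}$, giving $\sgn\int_\ell\alpha=-\sgn\diffFXp{H}{F}$, which is the claim; the degenerate case $\diffFXp{H}{F}=0$ makes both sides vanish and is covered as well. The one delicate point—and the main, if modest, obstacle—is the bookkeeping of orientations and signs: reconciling the natural boundary orientation of $F$-levels with the orientation induced by $\omega$ and with the sign in $i_u\omega=\diff H$. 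This is exactly the orientation fact already invoked in Lemma~\ref{constDirection} (that the symplectic gradient $\omega^{-1}\diff F$ runs opposite to the natural orientation of the levels), so I would either quote it from there or re-derive it in one line from $\omega(\omega^{-1}\diff F,\,n)=g(\grad F,n)>0$.
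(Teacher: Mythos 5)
Your proposal is correct and takes essentially the same route as the paper: the paper's proof is the single assertion that the claim ``directly follows from the relation $J^*\alpha = -\diff H$,'' and your argument is exactly a careful unpacking of that relation --- converting it via the compatibility $\omega(V,JW)=g(V,W)$ into $i_u\omega = \diff H$, evaluating on the outward normal, and doing the orientation bookkeeping (natural boundary orientation, $\omega(n,T)>0$, constancy of $\diff H/\diff F$ along $\ell$) that the paper leaves implicit. In short, your write-up supplies the details behind the paper's one-line proof rather than a different method.
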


Equivalently, in terms of the Reeb graph $(\Gamma,f)$ of the function $F$ on $M$ 
and the circulation function $\circulation(x)$ on the graph, defined 
by integrals of the 1-form $\alpha$ over levels $\ell_x= \pi^{-1}(x)$ for interior points $x \in \Gamma$, one has
\begin{align}\label{streamVorticityCirculation}
\sgn \diffFXp{H}{F}(f(x)) = -\sgn \circulation(x)\,.
\end{align}
The proof in terms of $F$-steady triples directly follows from the relation  $J^*\alpha = -\diff H$. 

\smallskip

Now we study the vicinity of hyperbolic levels of vorticity functions.

\begin{proposition}\label{step2}
Let $(M, \omega)$ be a symplectic surface, possibly with boundary, and let $F \colon M \to \R$ be a smooth function on $M$. Assume that $O \in M\, \setminus \, \partial M$ is a non-degenerate hyperbolic singular point of~$F$. Then there exists an $F$-steady triple $(\alpha, J, H)$ in the neighborhood of $O$. Moreover, for any $\eps = \pm 1$, the triple $(\alpha, J, H)$ can be chosen in such a way that $\sgn (\diff H/\diff F) = \eps$.
 \end{proposition}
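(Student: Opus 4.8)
The plan is to follow the strategy of Propositions~\ref{step0} and~\ref{step1}, reducing the construction of an $F$-steady triple to a single scalar equation for a compatible metric, but with the \emph{elliptic} action--angle picture replaced by a genuinely two-dimensional construction adapted to the saddle geometry. First I would put $F$ into the normal form of Lemma~\ref{MDL}: Darboux coordinates $(P,Q)$ with $\omega = \diff P\wedge\diff Q$ and $F = \zeta(S)$, $S=PQ$, $\zeta'(0)\neq 0$, so that the hyperbolic Hamiltonian field is $\sgrad S = P\,\diffX P - Q\,\diffX Q$ (with $i_{\sgrad S}\omega = \diff S$). I would seek the triple in the form $H=H(F)$, $u := h(S)\,\sgrad S$ with $h(S) := \diffFXp{H}{S}$, and $\alpha := u^\flat$ the $g$-dual of $u$ for the still-unknown compatible metric $g$. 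Since $H$ and $F$ are both functions of $S$, the bracket $\{F,H\}$ vanishes automatically, so stationarity is built in; moreover $J^*\alpha = -i_u\omega = -\diff H$ for \emph{any} compatible $J$, by the defining relation $\alpha(\,\cdot\,)=\omega(u,J\,\cdot\,)$. Hence the only condition left to enforce is the vorticity equation $\diff\alpha = F\omega$.

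Expanding this condition, using $\diff S\wedge(\sgrad S)^\flat = -\,g(\sgrad S,\sgrad S)\,\omega$ and $\diff\big((\sgrad S)^\flat\big)=\mathfrak{curl}(\sgrad S)\,\omega$, turns it into the scalar equation
\begin{equation}\label{eq:hyperbolicstar}
-\,h'(S)\,g(\sgrad S,\sgrad S)\;+\;h(S)\,\mathfrak{curl}(\sgrad S)\;=\;\zeta(S).
\end{equation}
The sign of $\diffFXp{H}{F}=h(S)/\zeta'(S)$ near $O$ equals $\mathrm{sign}(h)\cdot\mathrm{sign}(\zeta'(0))$, so to realise the prescribed value $\eps$ in the ``moreover'' clause — in accordance with Lemma~\ref{lem:sign} — it suffices to fix the sign of $h$; this choice is harmless, since \eqref{eq:hyperbolicstar} is a single equation for the two free functions describing a compatible metric.

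The heart of the proof, and the main obstacle, is to solve \eqref{eq:hyperbolicstar} by a compatible metric $g$ that is smooth and positive-definite on a \emph{full} neighbourhood of $O$, including the two separatrices $\{P=0\}$ and $\{Q=0\}$ that make up the figure-eight level. This is precisely where the hyperbolic case is harder than the elliptic Proposition~\ref{step1}: the level sets of $F$ no longer fibre a punctured neighbourhood by circles, there are no global action--angle coordinates, and one cannot simply copy formula~\eqref{stTripleCyl}. In particular $g(\sgrad S,\sgrad S)$ \emph{cannot} be chosen to be a function of $S$ alone: on $\{Q=0\}$ one has $\sgrad S = P\,\diffX P \neq 0$ while $S\equiv 0$, so such a metric would either force $g_{PP}\sim \mathrm{const}/P^2$ (not smooth at the separatrix) or make $g(\sgrad S,\sgrad S)$ vanish where $\sgrad S\neq 0$ (not positive-definite). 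Thus \eqref{eq:hyperbolicstar} must be treated as a genuine two-dimensional equation rather than reduced to an ODE in $S$.

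To carry this out I would parametrise compatible structures with area form exactly $\omega$ by $g_{PP}=c$, $g_{PQ}=-a$, $g_{QQ}=(1+a^2)/c$, $c>0$ (so $\det g = 1$), making \eqref{eq:hyperbolicstar} a first-order relation for the pair $(a,c)$ with $h$ prescribed. Being one equation for two functions, it can be solved to infinite order at $O$: at the point itself the first term drops out (as $\sgrad S(O)=0$) and $\mathfrak{curl}(\sgrad S)(O)=-2a(O)$ fixes $a(O)=-\zeta(0)/\big(2h(0)\big)$, while at each higher order the freedom in the jets of $a$ and $c$ suffices to match the Taylor coefficients of $\zeta$. Realising a smooth metric with this jet (e.g.\ by Borel summation) and then removing the residual, which is flat at $O$, by integrating the resulting equation along the flow of $\sgrad S$, produces a smooth positive-definite compatible $g$; keeping this correction compatible and controlling it uniformly up to the separatrices — where the $\sgrad S$-trajectories escape to infinite time — is the delicate technical point. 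Setting $\alpha=u^\flat$ and letting $J$ be the complex structure of $g$ then yields the desired $F$-steady triple $(\alpha,J,H)$, with $\mathrm{sign}(\diffFXp{H}{F})=\eps$ by the choice of $h$.
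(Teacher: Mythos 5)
Your reduction is correct, and it is in fact the structure hidden inside the paper's own proof: in the Morse--Darboux chart every $F$-steady triple with $H=H(F)$ has velocity $u=\alpha^\sharp=h(S)\,\sgrad{S}$, the relation $J^*\alpha=-\diff H$ holds automatically for the $g$-dual of such a $u$, and the whole content is the single scalar equation you derive for the metric. The genuine gap is that you never solve that equation. Your scheme --- match the jets of the metric at $O$, realize the formal solution by Borel summation, then kill the flat residual ``by integrating the resulting equation along the flow of $\sgrad{S}$'' --- is left unfinished at exactly its critical step. Near a saddle, the $\sgrad{S}$-trajectories through points of the separatrices $\{P=0\}\cup\{Q=0\}$ reach $O$ only in infinite time, and the transit time of nearby trajectories diverges logarithmically; integrating a residual (however flat at $O$) along such trajectories gives no control of smoothness or boundedness of the correction up to the separatrices, nor does it guarantee that the corrected tensor stays positive-definite and compatible there. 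Since the separatrices are the entire difficulty of the hyperbolic case (as opposed to the elliptic Proposition \ref{step1}), conceding this as ``the delicate technical point'' means the proposition is not proved; also the claim that the jets can be matched at every order is asserted rather than shown.

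The step that pushed you into this machinery is a non sequitur. From the (correct) observation that $g(\sgrad{S},\sgrad{S})$ cannot be a function of $S$ alone you infer that the equation ``must be treated as a genuine two-dimensional equation''; but nothing prevents the \emph{metric coefficients} from depending on $S$ only. Put $g_{PP}=g_{QQ}=\kappa(S)$, $g_{PQ}=-a(S)$ in the Darboux chart. Then
$$
g(\sgrad{S},\sgrad{S})=\kappa(S)(P^2+Q^2)+2Sa(S)\,,\qquad
\mathfrak{curl}(\sgrad{S})=-2a-2Sa'-(P^2+Q^2)\,\kappa'\,,
$$
so in your scalar equation the coefficient of $(P^2+Q^2)$ and the pure $S$-part must vanish separately, and the PDE collapses to the ODEs $(h\kappa)'=0$ and $2(Sha)'=-\zeta(S)$. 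Together with the compatibility condition $\det g=\kappa^2-a^2=1$ these are solved explicitly:
$$
ha=\eta(S):=-\frac{1}{2S}\int_0^S \zeta(\xi)\,\diff \xi\,,\qquad h\kappa=c=\const\,,\qquad h=\sgn{(c)}\sqrt{c^2-\eta(S)^2}\,,
$$
where $\eta$ is smooth at $S=0$ and the construction is valid as soon as $|c|>|\eta|$ near $O$; choosing $\sgn{c}=\eps$ fixes the sign of $\diff H/\diff F$. Unwinding this gives precisely the paper's proof: the explicit $1$-form $\alpha=(\eta(S)Q+cP)\,\diff P-(\eta(S)P+cQ)\,\diff Q$, for which $\diff\alpha=F\omega$ is a one-line check, together with the explicit $J$ and $H=\int\sqrt{c^2-\eta(S)^2}\,\diff S$ --- no PDE, no jets, and no delicate behaviour at the separatrices.
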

 
 \begin{proof}
By Lemma \ref{MDL}, there exist local coordinates $P, Q$ in the neighborhood of $O$ such that $\omega = \diff P \wedge \diff Q$, and $F = \zeta(S)$ where $S=PQ$, and $\zeta'(0) \neq 0$. Without loss of generality, we may assume that $\zeta'(0) > 0$ (if not, we replace the chart $P, Q$ with $Q, -P$). Let
$$
\alpha := (\eta(S)Q +  cP)\diff P - (  \eta(S)P + cQ )\diff Q\,\quad \text{ for }\,\, 
\eta(S) := -\frac{1}{2S}\int\limits_0^S \zeta(\xi) \diff \xi\,,
$$
where  $c$ is any constant such that $\sgn c = \eps$ and $|c| > |\eta(S)|$ in a sufficiently small 
neighborhood of the point $O$. Then we have 
$$
\diff \alpha = - 2(S\eta'(S) + \eta(S)) \,\diff P \wedge \diff Q = F \omega\,,
$$
as desired. Further, for $\eps = 1$ we set
$$
J :=  \frac{1}{\sqrt{c^2 - \eta(S)^2}} \left(\begin{array}{cc}\eta(S) & -c \\c & -\eta(S)\end{array}\right)\,, \quad H(S) := \int \sqrt{c^2 - \eta(S)^2}\, \diff S\,.
$$
Then $J$ is a complex structure compatible with $\omega$, and $J^*\alpha = -\diff H$. Furthermore, we obtain
$$
\sgn \frac{\diff H}{\diff F} = \sgn\left( \frac{\diff H}{\diff S} :  \frac{\diff F}{\diff S} \right)= \sgn \frac{\sqrt{c^2 - \eta(S)^2}}{\zeta'(S)} = 1\,,
$$
as required. Similarly, for $\eps = - 1$ we simply need to change the signs  of $J$ and $H$.
\end{proof}

\begin{proposition}\label{step3}
Let $(M, \omega)$ be a compact symplectic surface, possibly with boundary, and let $F \colon M \to \R$  be a simple Morse function on $M$. Let also $\pazocal E$ be a figure-eight level of $F$. Then there exists a steady $F$-triple $(\alpha, J, H)$ in the neighborhood of $\pazocal E$. Moreover, for any non-zero numbers $c_1, c_2 \in \R$ having the same sign, the triple $(\alpha, J, H)$ can be chosen in such a way that
$$
\int_{\ell_i} \alpha = c_i
$$
where $\ell_i$ are the loops constituting the level $\pazocal E$ (see Figure \ref{steadyFlow2}).
\end{proposition}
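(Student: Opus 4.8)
The plan is to produce the triple on a tubular neighborhood $U$ of the figure-eight $\pazocal E$ by gluing together the two local models already at our disposal: the hyperbolic model of Proposition~\ref{step2} in a neighborhood $V$ of the singular point $O$, and the cylindrical model of Proposition~\ref{step0} on the regular part. Topologically $U$ is a pair of pants: removing $V$ leaves three cylindrical collars $C_0, C_1, C_2$ running along the trunk and the two branches of the trivalent vertex associated with $\pazocal E$, on each of which $F$ has no critical points, so that Lemma~\ref{ALT} furnishes action-angle coordinates and Proposition~\ref{step0} applies. The first move is to fix the sign: since $c_1, c_2$ are nonzero with a common sign, put $\eps := -\sgn{c_1} = -\sgn{c_2}$ and run Proposition~\ref{step2} near $O$ with this value, so that $\sgn \diffFXp{H}{F} = \eps$. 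By Lemma~\ref{lem:sign} this is exactly the sign the circulation function is forced to have near a singular level, so the choice is consistent on all three edges simultaneously, and the common-sign hypothesis is precisely what permits a single monotone profile $h$ with $H = h(F)$ to exist across $\pazocal E$.

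Next I would install the prescribed circulations. Writing $F_* := F(O)$, for a steady triple the circulation over a branch loop equals $-h'(F_*)$ times the positive metric-dependent quantity
\[
\int_{\ell_i}|\nabla F|_g\,\diff s,
\]
so that by Lemma~\ref{lem:sign} its sign is $-\eps = \sgn{c_i}$, while its magnitude is governed by the metric along $\ell_i$. The key point is that the flat action-angle metric alone cannot produce nonzero values: near the critical level the level circles keep bounded length while $|\nabla F|_g \to 0$, which would force the circulation to vanish. The role of Proposition~\ref{step2}, with its tunable constant $c$ of sign $\eps$, is exactly to supply a metric near $O$ for which the figure-eight loops carry nonzero circulation. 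Because $\ell_1$ and $\ell_2$ meet only at $O$, the metric can be adjusted along the two branches independently, which is the source of the two independent degrees of freedom needed to realize an arbitrary same-sign pair $(c_1, c_2)$; the trunk value is then forced to be $c_1 + c_2$, the limiting Kirchhoff relation coming from the fact that the trunk circle is homologous to $\ell_1 + \ell_2$ and that the flux of $\diff \alpha = F\omega$ over the thin region between them is arbitrarily small as it shrinks to $\pazocal E$.

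The main obstacle is the gluing itself, since all four conditions of Definition~\ref{defn:steady-triple} are coupled. I would first patch the global $1$-form: on each annular overlap between $V$ and a collar both local solutions satisfy $\diff \alpha = F\omega$, so their difference is closed, and interpolating them with a cutoff $\chi$ introduces only the error $\diff\chi \wedge (\alpha_{\mathrm{loc}} - \alpha_{\mathrm{cyl}})$; this error vanishes precisely when the two forms share the same circulation over the interface circle, a condition I arrange through the tuning of the previous step (matching the interface value is equivalent to realizing $c_i$ up to the controllable flux across the collar, which shrinks with $V$). The remaining discrepancy is then exact and is absorbed by adding $\diff(\chi\,\phi)$, leaving $\diff \alpha = F\omega$ and the circulations unchanged. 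Keeping $H = h(F)$ is automatic once the single profile $h$ is fixed. The final and most delicate step is to produce a global almost complex structure $J$, compatible with $\omega$, realizing $J^*\alpha = -\diff H$ exactly: this is a pointwise algebraic problem for $J$, solvable away from $O$ precisely because the circulation positivity guaranteed by the common-sign hypothesis makes $\alpha$, $\diff H$, and $\omega$ pointwise compatible, with the behavior at $O$ itself already fixed by Proposition~\ref{step2}; elsewhere the available choices of $J$ form a convex set, so they can be interpolated. Tracking these compatibilities simultaneously through the glue is where the real work lies.
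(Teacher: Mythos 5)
Your opening move --- running Proposition~\ref{step2} near $O$ with $\eps := -\sgn{c_1} = -\sgn{c_2}$, so that $H = H(F)$ extends uniquely to a neighborhood of $\pazocal E$ --- is exactly how the paper's proof begins. But the decomposition on which everything afterwards rests is geometrically wrong, and the gap it creates is precisely the content of the proposition. Removing a neighborhood $V$ of the singular point $O$ from the pair of pants $U$ does not leave three cylinders: it leaves a \emph{connected} region (a four-holed sphere) which still contains the two arcs $\pazocal E \setminus V$ of the critical level. The three cylinders you have in mind can only be the connected components of $U \setminus \pazocal E$, i.e.\ the trunk-side and the two branch-side regions foliated by \emph{regular} level circles; but those are disjoint from the critical level, so $V \cup C_0 \cup C_1 \cup C_2$ never covers the arcs of the figure eight away from $O$. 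Your glued triple is therefore never defined on a full neighborhood of $\pazocal E$ --- and constructing $(\alpha, J, H)$ across exactly those arcs is the hard part of the statement. Nor can you fix this by letting the cylinders run up to the critical level: the action-angle coordinates of Lemma~\ref{ALT}, on which Proposition~\ref{step0} depends, degenerate there (the density $\diff S / \diff F$ blows up logarithmically, i.e.\ $\zeta' \to 0$), so the cylindrical model does not extend to the arcs.

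The gluing scheme fails for the same reason. The overlap of $V$ with, say, the trunk cylinder is not an annulus: in the Morse--Darboux coordinates of Lemma~\ref{MDL} it is a pair of opposite hyperbolic sectors of the form $\{PQ > 0\} \cap V$, and no level circle of $F$ is contained in it, so there is no ``interface circle'' over which to match circulations. The cutoff argument you describe is legitimate only for interpolation across \emph{regular} levels --- it is essentially the paper's Proposition~\ref{step4}, used later in the global construction --- not across a neighborhood of a singular fiber. The paper's actual proof handles the arcs by covering each of them with a region that \emph{straddles} the critical level (the regions $U_1$, $U_2$ together with small rectangles $R_i$), choosing there a coordinate $G$ transverse to $F$, and extending the $1$-form in the explicit shape $\alpha = A(F,G)\,\diff G$ with $\partial A / \partial F = F/\{F,G\}$, which agrees with the hyperbolic model on the rectangles because both solve the same ODE in $F$ with the same initial data at $F = F(O)$. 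Note also that the freedom to realize the prescribed $c_1, c_2$ lives in this extension --- in the free initial condition $A^0(G)$ along each arc, combined with shrinking the rectangles so that the portion of $\int_{\ell_i} \alpha$ contributed by the local model near $O$ is arbitrarily small --- and not, as you suggest, in the constant $c$ of Proposition~\ref{step2} or in ``the metric along the branches'': the local model near $O$ contributes arbitrarily little to the circulations, so no tuning of it alone can produce the pair $(c_1, c_2)$.
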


The proof of this proposition is a rather technical explicit construction, and we give it in Appendix A.

\begin{proposition}\label{step4}
Let $(C,\omega)$ be a symplectic cylinder with boundary $\partial C = \ell_2 - \ell_1$, and let  $F \colon C \to \R$ be a smooth function without critical points on $C$ and constant on the boundary components:
$
F\vert_{\ell_1} = c_1 < c_2 =  F\vert_{\ell_2} 
$.
Let also $\alpha_1, \alpha_2$ be $1$-forms defined in  neighborhoods of the cycles $\ell_1, \ell_2$ respectively such that
\begin{align}\label{diffOfAlpha12}
\diff \alpha_1 = \diff \alpha_2 = F\omega\,.
\end{align}
Assume also that  in those neighborhoods the expression $\omega^{-1}(\alpha_i, \diff F) $ does not vanish, 
i.e. 1-forms $\alpha_i $ and $ \diff F$ are  linearly independent at every point of the neighborhoods.
Furthermore, consider the function
$$
\eta(z) :=  \int_{\ell_1} \alpha_1 + \int_{C_z} F\omega\,,
$$
where $C_z$ is the set of points where $F \leq z$. We  assume that 
\begin{align}\label{givenStokes}
\eta(c_2) =  \int_{\ell_2} \alpha_2 \,,
\end{align}
and that $ \eta(z) $ does not change sign in the interval $[c_1, c_2]$.

Then there exists a $1$-form $\alpha$ on $C$ which ``interpolates" between $\alpha_i $, i.e. it 
is linearly independent with $\diff F$, coincides with $\alpha_i$ in a sufficiently small neighborhood of $\ell_i$, and satisfies
$
\diff \alpha = F\omega\,.
$
\end{proposition}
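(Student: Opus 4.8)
The plan is to trivialize the cylinder and reduce the statement to a transparent problem about a single Fourier mode. First I would invoke Lemma~\ref{ALT} to introduce global action--angle coordinates $(S,\Theta)$ on $C$, so that $\omega = \diff S\wedge\diff\Theta$ and $F=\zeta(S)$ with $\zeta'\neq 0$; after possibly swapping coordinates I may assume $\zeta'>0$, so $S$ ranges over an interval $[s_1,s_2]$ with $\ell_i=\{S=s_i\}$. Writing an arbitrary $1$-form as $\alpha=A\,\diff S+B\,\diff\Theta$ makes all three requirements explicit: the condition $\diff\alpha=F\omega$ becomes $\partial_S B-\partial_\Theta A=\zeta(S)$; transversality $\alpha\wedge\diff F\neq 0$ is equivalent to $B\neq 0$ everywhere, since $\alpha\wedge\diff F=-B\,\zeta'(S)\,\diff S\wedge\diff\Theta$; and the circulation of $\alpha$ over the level $\{S=\mathrm{const}\}$ is $\int_0^{2\pi}B\,\diff\Theta$. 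Integrating the vorticity condition in $\Theta$ shows that this average is forced: $\partial_S\!\int_0^{2\pi}B\,\diff\Theta=2\pi\zeta(S)$, so $\int_0^{2\pi}B\,\diff\Theta=\tilde\eta(S):=\eta(\zeta(S))$ for every admissible $\alpha$. A one-line Stokes computation shows that the $\diff\Theta$-components $B_1,B_2$ of the given forms already realize this average near the two ends; for the upper end this uses precisely the compatibility hypothesis $\eta(c_2)=\int_{\ell_2}\alpha_2$.

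Next I would construct $B$ on all of $C$ by a convex-combination trick that handles transversality and the average constraint simultaneously. Choose a smooth partition of unity $\chi_1+\chi_2+\chi_3=1$ on $[s_1,s_2]$, with $\chi_1\equiv 1$ near $s_1$ and supported in the neighborhood where $\alpha_1$ is defined, $\chi_3\equiv 1$ near $s_2$ and supported where $\alpha_2$ is defined, and $\chi_2\geq 0$ in between, and set
\[
B:=\chi_1 B_1+\chi_2\,\frac{\tilde\eta}{2\pi}+\chi_3 B_2 .
\]
By the sign hypothesis on $\eta$ the function $\tilde\eta$ is nowhere zero and of one sign, say positive; the transversality of $\alpha_i$ forces $B_i$ to be nonvanishing, hence (being continuous on each level circle, with positive average $\tilde\eta$) strictly positive. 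Thus $B$ is a convex combination of strictly positive functions and is positive everywhere, which is exactly transversality. Moreover $\int_0^{2\pi}B\,\diff\Theta=(\chi_1+\chi_2+\chi_3)\tilde\eta=\tilde\eta$, and $B=B_i$ near $\ell_i$. This is the heart of the argument, and the step I expect to be the main obstacle: keeping $B\neq 0$ globally while respecting the forced average $\tilde\eta$ is the only delicate point, and it is here that both the sign hypothesis and the compatibility condition are used.

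Finally I would recover the $\diff S$-component. With $B$ fixed, solve $\partial_\Theta A=\partial_S B-\zeta(S)$ for $A$; the right-hand side has zero $\Theta$-average because $\partial_S\!\int_0^{2\pi}B\,\diff\Theta=\tilde\eta'=2\pi\zeta$, so $A_0(S,\Theta):=\int_0^\Theta\!\big(\partial_S B-\zeta\big)\,\diff\Theta'$ is a smooth $2\pi$-periodic particular solution and the general solution is $A_0+\phi(S)$. Near $\ell_i$, where $B=B_i$, the given component $A_i$ already solves the same equation, so there $A_0=A_i-A_i(\,\cdot\,,0)$; choosing $\phi$ to be any smooth function of $S$ equal to $A_1(S,0)$ near $s_1$ and to $A_2(S,0)$ near $s_2$ (possible since these are prescribed on disjoint intervals) yields $A=A_i$ near $\ell_i$. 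Since adding $\phi(S)\,\diff S$ alters only the $\diff S$-component, transversality is untouched. The resulting $\alpha=A\,\diff S+B\,\diff\Theta$ then satisfies $\diff\alpha=F\omega$, is linearly independent with $\diff F$, and coincides with $\alpha_i$ near $\ell_i$, as required.
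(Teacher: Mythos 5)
Your proposal is correct and follows essentially the same route as the paper's proof: action--angle coordinates from Lemma~\ref{ALT}, the observation that the $\Theta$-average of the $\diff\Theta$-component is forced to equal $\eta(\zeta(S))$, a partition-of-unity convex combination with the rotationally symmetric representative $\frac{1}{2\pi}\eta(\zeta(S))$ to build a nonvanishing $B$, and Poincar\'e's lemma with parameters to recover the $\diff S$-component. If anything, your final step is slightly more explicit than the paper's, since you spell out the adjustment by a function $\phi(S)$ needed to make $A$ agree with $A_i$ near $\ell_i$, a point the paper leaves implicit.
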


\begin{proof}
We work in action-angle coordinates $S, \Theta$ from Lemma \ref{ALT}. 
In the neighborhood of $\ell_i$ we have 
$$
\alpha_i = A_i(S, \Theta)\diff S + B_i(S, \Theta) \diff \Theta\,.
$$
By taking the direct image of the second component under the projection to the $S$-axis and recalling that $F= \zeta(S)$
we see that 
$$
\int\limits_{0}^{2\pi} B_i(S, \Theta) \diff \Theta = \eta (\zeta(S))
$$
(for any $S$ where the left-hand side is well-defined), as follows  from~\eqref{diffOfAlpha12},~\eqref{givenStokes}, 
and the Stokes formula. The condition of linear independence implies that the functions $B_1, B_2$ do not vanish. 
Furthermore, they have the same sign as  the function $\eta$, since 
$$
\int\limits_0^{2\pi} B_i(S(\ell_i), \Theta) \diff \Theta = \int_{\ell_i} \alpha_i = \eta(c_i)\,.
$$
Consider the convex set $\pazocal B$   of smooth functions $B(S, \Theta)$ on the cylinder $C$ which have the same sign as the function $\eta$
 and the same pushforward $ \eta (\zeta(S))$ for the 1-forms $B(S, \Theta)\diff \Theta $. The set $\pazocal B$ is nonempty as it contains a 
function of $S$ only:  $B_0(S, \Theta) := \frac{1}{2\pi}\eta(\zeta(S))$. Therefore, using a partition of unity, one can construct a function  $\tilde B(S, \Theta) \in \pazocal B$
which coincides with $B_i$ in a neighborhood of $\ell_i$ for $i= 1,2$. Then one can restore the desired 1-form
$$
\alpha := A(S, \Theta)\diff S + \tilde B(S, \Theta) \diff \Theta\,,
$$
satisfying $ \diff \alpha = F\omega$ 
by Poincar\'e's lemma with parameters: the 1-form $(\frac{\partial}{\partial S}\tilde B(S, \Theta)- \zeta(S)) \diff \Theta$  is closed on levels sets of $F$ and has zero periods, and hence exact, i.e. for any fixed $S$ represented as $\diff_\Theta A(S, \Theta)$ for an appropriate function 
$A(S, \Theta)$ on the cylinder.
\end{proof}

\begin{definition}
Let $(\Gamma, f, \mu)$ be a measured Reeb graph, and let $\circulation$ be a balanced circulation function on $\Gamma$. 
Denote by $\wave V \subset \Gamma$ the set of {\it exceptional points} of the graph, defined as the union of all vertices and 
all points $x$ of the graph where either $f(x)=0$ or $\circulation(x)=0$ (or both):
$$
\wave V := V(\Gamma) \cup  \{ x \in \Gamma \, \mid f(x) = 0\} \cup \{ x \in \Gamma\, \mid \circulation(x) = 0\} .
$$ 
\end{definition}

\begin{definition}
By a \textit{$1$-form} on a Reeb graph $(\Gamma, f)$  we mean an expression which can locally be written as $g(f) \diff f$, where $g(f)$ is a smooth function of $f$. A $1$-form $\beta$ is \textit{exact} if for any cycle $\ell$ in $\Gamma$ we have
$
\int_\ell \beta = 0\,.
$
\end{definition}
\begin{proposition}\label{step5}
Let $(\Gamma, f, \mu)$ be a measured Reeb graph,  $\circulation$  a balanced circulation function on $\Gamma$, and 
$\wave V $ the set of the corresponding exceptional points of the graph. 
Assume that in the neighborhood $U_i$ of each exceptional point $x_i \in  \wave V $ there is  a $1$-form $\beta_i$ such that for each $x \in U_i \, \setminus \, \{x_i \}$, we have
\begin{align}\label{compatibilityWithCirculation}
\sgn \frac{\beta_i}{\diff f} = -\sgn \circulation\,.
\end{align}
Then there exists a $1$-form $\beta$ on $\Gamma$ which does not vanish in $\Gamma \, \setminus \,  \wave V$, coincides with $\beta_i$ in a neighborhood of $x_i \in \wave V$, and such that the form $f\beta$ is exact.
\end{proposition}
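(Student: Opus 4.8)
The plan is to construct the required $1$-form by first producing its ``primitive'' on the graph and reducing everything to a purely combinatorial consistency condition, which balancedness will supply. Observe that asking $f\beta$ to be exact is the same as asking the $1$-cochain $e \mapsto \int_e f\beta$ to be a coboundary, i.e. that there be a function $\Phi$ on the vertices with $\int_e f\beta = \Phi(\mathrm{head}\, e) - \Phi(\mathrm{tail}\, e)$; integrating $f\beta$ along edges starting from these vertex values then yields a single-valued continuous $G \colon \Gamma \to \R$, smooth as a function of $f$ on each edge, with $\diff G = f\beta$. Conversely, such a $G$ forces all periods of $f\beta$ to vanish. Under this correspondence the hypothesis~\eqref{compatibilityWithCirculation} becomes a monotonicity condition on $G$: since $\diff G / \diff f = f\cdot(\beta/\diff f)$, on each connected component of $\Gamma \setminus \wave V$ (an arc on which both $f$ and $\circulation$ have constant sign) one has
\[
\sgn \frac{\diff G}{\diff f} = -\sgn{(f\circulation)} =: \sigma .
\]
This is exactly the sign dictated by Lemma~\ref{lem:sign}, with $\beta$ playing the role of $\diff H$. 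So I would seek a single-valued $G$ that is strictly monotone in $f$ with slope-sign $\sigma$ on each arc and matches the slopes imposed by $\beta_i$ near each exceptional point.

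The reduction itself is routine. On each arc the magnitude of $\diff G/\diff f$ is entirely free (any positive smooth profile when $\sigma = +1$, any negative one when $\sigma = -1$), subject only to matching the prescribed boundary behaviour near the exceptional endpoints; near the points where $f = 0$ one additionally prescribes a simple (first-order) zero of $\diff G/\diff f$ so that $\beta := \diff G / f$ is smooth and non-vanishing there, in agreement with $\beta_i$. Having fixed all the signs $\sigma$ and being free to choose all magnitudes, the \emph{only} obstruction to the existence of $G$ is that the system of strict inequalities ``$G$ increases along each arc in the $\sigma$-direction'' be consistent; equivalently, that the directed graph on $\wave V$ with one arrow per arc, pointing toward the endpoint of larger required $G$-value, contain no directed cycle. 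Given acyclicity, one produces $G$ by a topological sort (choosing magnitudes freely and matching the local data via a partition of unity) and sets $\beta = \diff G / f$, whence $f\beta = \diff G$ is exact, the sign condition holds by the monotonicity of $G$, and $\beta$ coincides with $\beta_i$ near $x_i$.

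The heart of the argument is to deduce this acyclicity from balancedness. Suppose a directed cycle existed; it traces a closed walk in $\Gamma$ along whose arrows the required values of $G$ strictly increase. Along a $\sigma=+1$ arc an arrow points toward larger $f$, and along a $\sigma=-1$ arc toward smaller $f$; since the walk is closed, $f$ returns to its starting value, so the walk attains a local maximum of $f$ at some exceptional point $y$, where the incoming arc has $\sigma=+1$ and the outgoing arc has $\sigma=-1$, i.e. $y$ is the higher-$f$ endpoint of \emph{both} arcs. At an interior exceptional point the two adjacent arcs lie on opposite sides in the $f$-coordinate, so only one of them can have $y$ as its higher-$f$ endpoint; hence $y$ must be a $3$-valent vertex $v$ (cycles avoid $1$-valent vertices), and since both used edges lie below $v$, the vertex $v$ is of merge type and the two edges are precisely its branches $e_1,e_2$. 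Now $\sigma=+1$ on $e_1$ and $\sigma=-1$ on $e_2$ near $v$, while $f$ has a single sign at $v$; the identity $\sigma = -\sgn{(f\circulation)}$ then forces $\circulation$ to have opposite signs on $e_1$ and $e_2$ near $v$, so $c_1(v)$ and $c_2(v)$ have opposite signs (or one vanishes). By the Kirchhoff rule~\eqref{3valentcirc} this contradicts balancedness of $\circulation$. The same computation covers the degenerate case $f(v) = 0$, where both branches carry $f < 0$. Hence no directed cycle occurs, $G$ exists, and the construction is complete.

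The main obstacle I anticipate is precisely this last step: matching the branch/trunk structure of the turnaround vertex with the sign bookkeeping of $\sigma = -\sgn{(f\circulation)}$, and verifying that a local maximum of $f$ along the walk can occur only at a merge vertex and only along its two branches, so that the balanced condition is exactly the obstruction that must be excluded. A secondary, purely technical point is ensuring smoothness and non-vanishing of $\beta = \diff G / f$ across the levels $f = 0$, which is handled by the prescribed first-order vanishing of $\diff G/\diff f$ there; the sign changes of $\beta$ across the points where $\circulation = 0$ are harmless, as those points lie in $\wave V$ where $\beta$ is allowed to vanish.
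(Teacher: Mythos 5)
Your proposal is correct and is essentially the paper's own argument in different clothing: your single-valued primitive $G$ (with vertex values produced by topological sort) is exactly the $0$-cochain $\eta$ of the paper's Lemma \ref{nonCycleProp}, whose coboundary $\xi = \delta\eta$ prescribes the integrals $\int_e f\beta$, and your acyclicity-from-balancedness step --- a hypothetical directed cycle would attain its $f$-maximum at a $3$-valent merge vertex whose two branches then carry circulations of opposite (weak) sign --- is precisely the paper's proof that $\eps(e) = -\sgn{(f\circulation)}\vert_e$ satisfies condition \eqref{nonCycle} on the subdivided graph $\wave\Gamma$. One cosmetic remark: the appeal to the Kirchhoff rule \eqref{3valentcirc} in your final contradiction is unnecessary, since balancedness by definition already requires $c_1(v)$ and $c_2(v)$ to be nonzero and of equal sign.
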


The proof of Proposition \ref{step5} is based on the following lemma:

\begin{lemma}\label{nonCycleProp}
Assume that $\Gamma$ is an oriented graph, and let $\eps \colon E(\Gamma) \to \{ \pm 1\}$ be a function such that for any subset $E' \subset E(\Gamma)$, we have
\begin{align}
\label{nonCycle}
\partial \sum_{e \,\in \, E'} \eps(e)e \neq 0\,.
\end{align}
Then there exists a $1$-coboundary $\xi \colon E(\Gamma) \to \R$ such that $\sgn \xi = \eps$.
\end{lemma}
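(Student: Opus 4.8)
The plan is to convert the prescribed-sign problem for a coboundary into a monotonicity condition under a suitable reorientation of $\Gamma$, and then to recognize the hypothesis as exactly the statement that the reoriented graph is acyclic. Concretely, I would let $\Gamma'$ denote the oriented graph obtained from $\Gamma$ by reversing the orientation of every edge $e$ with $\eps(e) = -1$ and keeping the orientation of every edge with $\eps(e) = +1$. Recall that a $1$-coboundary is $\xi = \delta\phi$ for a vertex function $\phi \colon V(\Gamma) \to \R$, where $(\delta\phi)(e) := \phi(\mathrm{head}(e)) - \phi(\mathrm{tail}(e))$. A direct check of the two cases $\eps(e) = \pm 1$ shows that $\sgn \xi = \eps$ is equivalent to requiring that $\phi$ \emph{strictly increases along every directed edge of $\Gamma'$}, i.e. $\phi(w) > \phi(v)$ whenever $\Gamma'$ has an edge from $v$ to $w$. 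Thus it suffices to produce such a $\phi$.

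Next I would observe that a function strictly increasing along all directed edges of $\Gamma'$ exists if and only if $\Gamma'$ has no directed cycle. One direction is immediate: along a directed cycle $v_0 \to v_1 \to \dots \to v_k = v_0$ the inequalities $\phi(v_0) < \phi(v_1) < \dots < \phi(v_k) = \phi(v_0)$ would be contradictory. For the converse, a finite directed acyclic graph admits a topological ordering $v_1, \dots, v_n$ of its vertices in which every edge runs from a smaller to a larger index; setting $\phi(v_i) := i$ then gives the required strictly increasing function (this works even if $\Gamma'$ is disconnected).

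It then remains to match the hypothesis with the acyclicity of $\Gamma'$, which is the heart of the argument. Reversing an edge $e$ with $\eps(e) = -1$ replaces $\eps(e)e = -e$ by the reoriented edge taken with coefficient $+1$; hence for a nonempty $E' \subset E(\Gamma)$ the chain $\sum_{e \in E'}\eps(e)e$ has the same boundary as the all-ones chain on the corresponding edges of $\Gamma'$, and $\partial \sum_{e \in E'}\eps(e)e = 0$ is exactly the condition that, in the subgraph of $\Gamma'$ spanned by $E'$, the indegree equals the outdegree at every vertex. If $\Gamma'$ contained a directed cycle, taking $E'$ to be its edge set would violate the hypothesis; conversely, any nonempty edge set of $\Gamma'$ in which indegree equals outdegree at every vertex must contain a directed cycle (start at a vertex of positive degree and keep following outgoing edges — the balance condition guarantees one can always leave a vertex just entered, so a vertex eventually repeats). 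Therefore the hypothesis holds precisely when $\Gamma'$ is acyclic, and combining this with the previous step yields the desired coboundary $\xi = \delta\phi$.

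The main obstacle will not be any single hard estimate but getting the bookkeeping of orientations exactly right: I must verify that the sign prescription $\sgn \xi = \eps$ translates without error into monotonicity along $\Gamma'$, and that the homological vanishing $\partial\sum_{e\in E'}\eps(e)e = 0$ is genuinely equivalent to the degree-balance condition — in particular handling loops, which are ruled out by applying the hypothesis to a singleton $E' = \{e\}$, and multiple edges. Once this dictionary is fixed, the remaining ingredients (topological sorting of a DAG and the Euler-type extraction of a directed cycle from a balanced subgraph) are standard. I note in passing that one could instead invoke a theorem of the alternative (Gordan/Stiemke) for the system $\eps(e)(\delta\phi)(e) > 0$, but that route would additionally require passing from a nonnegative real relation $\partial\sum_e y_e \eps(e) e = 0$ back to an integral $0/1$ subset, which amounts to the same cycle-extraction step; the combinatorial reorientation argument above is cleaner and self-contained.
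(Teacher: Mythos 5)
Your proposal is correct and follows essentially the same route as the paper's proof: reverse the edges with $\eps(e)=-1$, deduce from condition \eqref{nonCycle} that the reoriented graph has no directed cycles, produce a vertex function increasing along all reoriented edges, and take its coboundary. The only difference is that you spell out the details the paper leaves implicit (the topological ordering, the cycle-extraction from a balanced edge set, and the sign bookkeeping), which is a faithful elaboration rather than a different argument.
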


\begin{proof}
Reverse the orientation of all edges $e$ such that $\eps(e) = -1$. Then from condition \eqref{nonCycle} it follows that the obtained graph has no directed cycles. Therefore, there exists a $0$-cochain $\eta$ such that $\eta(w) > \eta(v)$ if there is an edge going from $v$ to $w$, and it is easy to see that the $1$-coboundary $\xi := \delta \eta$, where $\delta$ is the coboundary operator, has the desired property.
\end{proof}

\begin{proof}[Proof of Proposition \ref{step5}]
 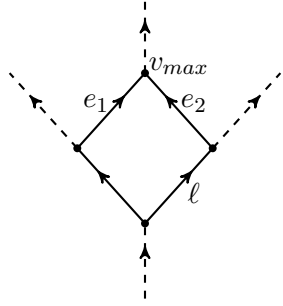
\begin{figure}[t]
\centerline{
\begin{tikzpicture}[thick, scale = 1]
    \node [vertex] at (0,0) (nodeC) {};
        \node [] at (0.45,0.1) (nodeV) {$v_{max}$};
                    \node [] at (-0.65,-0.4) () {$e_1$};
                \node [] at (0.65,-0.4) () {$e_2$};
                  \node [] at (0.65,-1.6) () {$\ell$};
        \node [vertex] at (-0.9,-1) (nodeD) {};
          \draw  [->-, dashed] (nodeD) -- (-1.8, 0);
              \node [vertex] at (0.9,-1) (nodeE) {};
                    \draw  [->-, dashed] (nodeE) -- (1.8, 0);
     \node [vertex] at (0,1) (nodeF) {};          
          \node [vertex] at (0,-2) (nodeG) {}; 
                \draw  [->-, dashed] (0, -3) -- (nodeG);     
    \draw  [->-] (nodeD) -- (nodeC);
        \draw  [->-] (nodeE) -- (nodeC);
           \draw  [->-, dashed] (nodeC) -- (nodeF);
    \fill (nodeC) circle [radius=1.5pt];
        \fill (nodeD) circle [radius=1.5pt];
            \fill (nodeE) circle [radius=1.5pt];
                \fill (nodeG) circle [radius=1.5pt];
                \draw  [->-] (nodeG) -- (nodeD);
                      \draw  [->-] (nodeG) -- (nodeE);
\end{tikzpicture}
}
\caption{A maximum of $f$ on a cycle $\ell$ in a measured Reeb graph.}\label{maxf}
\end{figure}
Consider the graph $\wave \Gamma$ obtained from $\Gamma$ by regarding all points $x \in \Gamma$ such that $f(x) = 0$ or $\circulation(x) = 0$ as $2$-valent vertices. Then the product $f\circulation$ has the same sign at interior points of any edge $e \in \wave \Gamma$. Define a function $\eps \colon E(\wave \Gamma) \to \{ \pm 1\}$ by setting  \begin{align}\label{epsDef}\eps(e) := -\sgn f\circulation\vert_e\,. \end{align}
Thanks to condition \eqref{compatibilityWithCirculation}, the forms $\beta_i$ defined near vertices of the graph $\wave \Gamma$ extend to a global form $\beta$ such that
$$
\sgn \frac{\beta}{\diff f}(x) = -\sgn \circulation(x)
$$
for any interior point $x \in \wave \Gamma$. For any edge $e \in E(\wave \Gamma)$, we have
$$
\sgn \int_e f\beta = \eps(e)\,,
$$
where $\eps(e)$ is given by formula \eqref{epsDef}.  
Moreover, it is easy to see that for any $1$-cochain $ \xi \colon E(\wave \Gamma) \to \R$ such that $\sgn \xi(e) = \eps(e)$, one can choose the form $\beta$ in such a way that
\begin{align}\label{integralOverEdge}
 \int_e f\beta = \xi(e)\,.
\end{align}
We claim that the function $\eps(e)$ satisfies condition \eqref{nonCycle}. Indeed, assume that
$$
\ell = \sum_{e \,\in \, E'} \eps(e)e
$$
is a cycle. 
Consider the point $v_{max} \in \ell$ where $f$ is maximal. Then $v_{max}$ has to be a $3$-valent vertex with two incoming edges $e_1, e_2 \in E(\wave \Gamma) $ (see Figure \ref{maxf}). Note that since the circulation function $\circulation$ is balanced, the product $ f\circulation$ has the same sign at $e_1$ and $e_2$. Therefore, we have $\eps(e_1) = \eps(e_2)$, which contradicts $\ell$ being a cycle.\par
Thus,  since $\eps(e)$ satisfies \eqref{nonCycle}, by Lemma \ref{nonCycleProp} the $1$-cochain $\xi$ entering \eqref{integralOverEdge} can be chosen to be a coboundary. The latter implies that $\beta$ can be chosen in such a way that $f\beta$ is exact, q.e.d.
\end{proof}

\begin{proof}[Proof of the ``if'' part of Theorem \ref{thm:steady}]
Let $(M, \omega)$ be a compact connected symplectic surface, possibly with boundary. Let also $\pazocal O \subset \SVect^*(M)$ be a Morse-type coadjoint orbit, and let $(\Gamma, f, \mu, \circulation)$ be the corresponding circulation graph. Assume that the circulation function $\circulation$ is balanced. We need to show that there exists a metric $g$ on $M$ which is compatible with $\omega$ and such that the corresponding Euler equation has a fixed point on $\pazocal O$.\par
Take any $[\wave \alpha] \in \pazocal O$, and let $F = \mathfrak{curl} [\wave \alpha]$. Then the measured Reeb graph $\Gamma_F$ associated with $F$ coincides with $(\Gamma, f, \mu)$, and we have a projection $\pi \colon M \to \Gamma$. As in Proposition \ref{step5}, let $\wave V $ be the set of exceptional points, 
$$
\wave V := V(\Gamma) \cup  \{ x \in \Gamma \, \mid f(x) = 0\} \cup \{ x \in \Gamma\, \mid \circulation(x) = 0\}\,,
$$ 
and let $U_i$ be a neighborhood  of $x_i \in \wave V$. Now, we use Propositions \ref{step0}, \ref{step1}, \ref{step3} to construct an $F$-steady triple $(\alpha_i, J_i, H_i)$ in $\pi^{-1}(U_i)$ such that:
\begin{longenum}
\item If $x_i \in \wave V$ is a boundary vertex or an interior point of $\Gamma$, then
$$
\int_{\ell_i} \alpha_i = \circulation(x_i)\,,
$$
where $\ell_i = \pi^{-1}(x_i)$.
\item If $x_i \in \wave V$ is a $3$-valent vertex of $\Gamma$, and $e_1$, $e_2$ are branches of $x_i$, then for $j = 1,2$ we have
$$
\int_{\ell_{ij}} \alpha_i = c_j(x_i)
$$
where $\ell_{ij} = \pi^{-1}(x_i) \cap \partial \pi^{-1}(e_j)$, and $c_j(x_i)$ is the limit of the circulation function $\circulation(x)$ as $x$ tends to $x_i$ along the edge $e_j$.
\end{longenum}
It follows from our construction that for any $x \in U_i \, \setminus \, x_i$ we have
$$
\int_{\ell_x} \alpha_i = \circulation(x)\,,
$$
where $\ell_x = \pi^{-1}(x)$. Furthermore, from the relation $J^*\alpha_i = -\diff H_i(F)$, it follows that if $\alpha_i$ is linearly dependent with $\diff F$ at some point $Z$ lying on a non-singular level $\pi^{-1}(x)$, then $\alpha \equiv 0$ on $\pi^{-1}(x)$, and thus $\circulation(x) = 0$. Therefore, $\alpha_i$ and $\diff F$ are independent at all points of $M \, \setminus \,  \pi^{-1}(\wave V)$ (the latter also follows from a more general formula \eqref{streamVorticityCirculation} above).

\par
Moreover, using Proposition \ref{step4}, we extend the forms $\alpha_i$ to a form $\alpha$ defined on the whole $M$ which is linearly independent with $\diff F$ at all points of $M \, \setminus \,  \pi^{-1}(\wave V)$ and satisfies $\diff \alpha = F\omega$. Note that for any interior point $x \in \Gamma$, we have
$$
\int_{\ell_x} \alpha = \circulation(x)\,,
$$
where $\ell_x = \pi^{-1}(x)$. By Lemma \ref{lem:sign} 
one obtains
$\sgn {\partial H_i}/{\partial F}(f(x)) = -\sgn \circulation(x)$
for any $x \in U_i \, \setminus \, \{x_i\}$. Therefore, descending the forms $\diff H_i$ from $\pi^{-1}(U_i)$ to $U_i$, one obtains a family of $1$-forms $\beta_i$ satisfying the condition of Proposition \ref{step5}. So, there exists a $1$-form $\beta$ on $\Gamma$ which does not vanish in $\Gamma \, \setminus \,  \wave V$, coincides with $\beta_i$ in a neighborhood of $x_i \in \wave V$, and such that the form $f\beta$ is exact. Lifting the form $\beta$ back to $M$, we obtain a form $ \gamma$ which does not vanish in $M \, \setminus \,  \pi^{-1}(\wave V)$, coincides with $\diff H_i$ in the neighborhood of the fiber $\pi^{-1}(x_i)$, and such that the form $F \gamma$ is exact. Now, we extend almost complex structures $J_i$ to a global almost complex structure $J$ by setting
$J^* \alpha := -\gamma, \, J^*\gamma := \alpha.$
Further, define a metric $g$ on $M$ be setting $g(u_1,u_2) := \omega(u_1, Ju_2)$. Then, from the formula $\diff(J^*\alpha) = -\diff \gamma = 0$ it follows that  $\alpha$ is co-closed. For the fluid velocity field $u = \alpha^\sharp$, we have
$$
L_u [\alpha]  = [i_u \diff \alpha] = [F i_u \omega] = -[FJ^*\alpha] = [F\gamma] = 0\,
$$
so $[\alpha]$ is a steady solution of the Euler equation lying on the orbit $\pazocal O$, as desired (we have $[\alpha] \in \pazocal O $ since $\diff \alpha = F\omega$, and the circulation function corresponding to $[\alpha] $ coincides, by construction, with the prescribed function $\circulation$).

\end{proof}

\medskip

\subsection{Steady flows on closed surfaces}\label{sect:closed}
\begin{definition}
Let $\Gamma$ be a measured Reeb graph with no boundary vertices, and let $\circulation$ be a circulation function on 
$\Gamma$. We say that $\circulation$ is \textit{totally negative} if for any $3$-valent vertex $v$ of $\Gamma$, 
the limits $c_0(v)$, $c_1(v)$, $c_2(v)$ of $\circulation$ at $v$ are negative.
\end{definition}

\begin{remark}
Note that total negativity in particular implies that $\circulation(x) < 0$ at any interior point $x \in \Gamma \, \setminus\, V(\Gamma)$. Indeed, for any edge $e \in \Gamma$ the total negativity condition and the Kirchhoff condition \eqref{3valentcirc} at $1$-valent vertices imply that $\circulation(x)$ has non-positive limits at endpoints of $e$. At the same time, from property \eqref{stokes} of circulation functions and the monotonicity of $f$ along $e$ it follows that the function $\circulation$ restricted to $e$ is concave down. Therefore, we indeed have $\circulation< 0$ at all points of $e$.
\end{remark}
 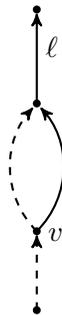
\begin{figure}[b]
\centerline{
\begin{tikzpicture}[thick, scale = 1]
    \node [vertex] at (7,0) (nodeC) {};
    \node [vertex]  at (7,1.05) (nodeD) {};
    \node [vertex] at (7,2.75) (nodeE) {};
    \node [vertex]  at (7,4) (nodeF) {};
    \draw  [a, dashed] (nodeC) -- (nodeD);
    \fill (nodeC) circle [radius=1.5pt];
    \fill (nodeD) circle [radius=1.5pt];
    \fill (nodeE) circle [radius=1.5pt];
    \fill (nodeF) circle [radius=1.5pt];
    \draw  [a] (nodeD) arc (-45:44:1.2cm);
    \draw  [a, dashed] (nodeD) arc (225:136:1.2cm);
    \draw  [a] (nodeE) -- (nodeF);
    \node at (7.6,2) () {$$};
     \node at (7.2,3.5) () {$\ell$};
     \node at (7.25, 1) () {$v$};
\end{tikzpicture}
}
\caption{A maximal directed path $\ell$ starting at a $3$-valent vertex $v$.}\label{path2}
\end{figure}
It turns out that for closed surfaces Theorem \ref{thm:steady} can be reformulated in the following way:

\begin{theorem}\label{thm:steadyclosed}
Let $(M, \omega)$ be a closed connected symplectic surface, and let  $\pazocal O \subset \SVect^*(M)$ be a Morse-type coadjoint orbit. Then $\pazocal O$ admits a steady flow if and only if the corresponding circulation function is totally negative.
\end{theorem}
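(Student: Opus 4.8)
The plan is to deduce this from Theorem~\ref{thm:steady}, which already identifies the orbits admitting a steady flow as exactly those whose circulation function $\circulation$ is balanced. Since a closed surface has no boundary vertices, it therefore suffices to prove the purely combinatorial statement that on a measured Reeb graph $\Gamma$ without boundary vertices a circulation function is balanced if and only if it is totally negative. One direction is immediate: a totally negative $\circulation$ has all three limits $c_0(v),c_1(v),c_2(v)$ negative at every $3$-valent vertex, so they are in particular nonzero and of the same sign, hence balanced; combined with Theorem~\ref{thm:steady} this already yields the ``if'' part. The content is the converse, that on a closed graph \emph{balanced forces negative}.

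For the converse I would argue by contradiction, assuming $\circulation>0$ at some interior point and analyzing a connected component $P_0$ of the open set $\{x\in\Gamma\mid\circulation(x)>0\}$. Two elementary facts drive the argument. First, parametrizing each edge by the measure $\mu$ and reading the Newton--Leibniz formula \eqref{stokes} infinitesimally gives $\diff\circulation = f\,\diff\mu$, so $\circulation$ is monotone along each edge in the direction of the sign of $f$ and has no interior critical point away from $\{f=0\}$; in particular $f$ is strictly monotone along each edge. Second, at every $1$-valent vertex of a closed graph, necessarily a maximum or minimum of $F$, the Kirchhoff rule \eqref{3valentcirc} degenerates to $\circulation=0$. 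A short check using balancedness then shows that each $3$-valent vertex lies either in the interior of $P_0$ (when its common sign is $+$, so that all three adjacent stubs are positive) or outside $\overline{P_0}$ (when its sign is $-$); hence the topological boundary $\partial P_0$ consists only of points where $\circulation=0$, namely interior zeros on edges and $1$-valent vertices.

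Now consider the point $x_+\in\overline{P_0}$ at which $f$ attains its maximum over $\overline{P_0}$; it is reached from any interior vertex by following a maximal directed path inside $P_0$ in the direction of increasing $f$ (cf. Figure~\ref{path2}). Since $f$ is strictly monotone on edges it has no interior maximum, and at a positive $3$-valent vertex there is always an outgoing edge along which $f$ increases while staying in $P_0$; hence $x_+$ must lie on $\partial P_0$, so $\circulation(x_+)=0$ and $P_0$ lies on the lower-$f$ side of $x_+$. Reading $\diff\circulation=f\,\diff\mu$ at $x_+$, the fact that $\circulation$ drops from positive values to $0$ as $f$ increases to $f(x_+)$ forces $f(x_+)<0$, and thus $f<0$ on all of $\overline{P_0}$. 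The symmetric argument at the point $x_-$ where $f$ attains its minimum over $\overline{P_0}$ forces $f(x_-)>0$, so $f>0$ on all of $\overline{P_0}$. As $P_0$ is nonempty this is a contradiction, so $\circulation\le 0$ everywhere; balancedness then upgrades the inequality to strict negativity at every $3$-valent vertex, i.e. $\circulation$ is totally negative.

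The step I expect to be the crux is the sign determination $f(x_+)<0$ and $f(x_-)>0$ at the $f$-extremal points of a positive component. This is precisely where closedness of the surface enters: via the Kirchhoff rule at $1$-valent vertices it guarantees that $\circulation$ vanishes at the maxima and minima of $F$, so that every boundary point of $P_0$ is a genuine zero of $\circulation$ across which the sign of $f$ can be read off from $\diff\circulation=f\,\diff\mu$. On a surface with boundary this fails, since $\circulation$ need not vanish at boundary vertices, in agreement with Theorem~\ref{thm:steady} not simplifying to total negativity in that case. One should also carry out the routine boundary bookkeeping, distinguishing interior zeros from $1$-valent vertices and ruling out the degenerate possibility $f(x_\pm)=0$, which is excluded because $\{f=0\}$ is a strict local minimum of $\circulation$ along an edge.
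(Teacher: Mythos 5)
Your proposal is correct in substance and follows the paper's reduction exactly (via Theorem~\ref{thm:steady}, it suffices to show that on a measured Reeb graph without boundary vertices, balanced is equivalent to totally negative, with one direction trivial), but your proof of the hard implication is organized genuinely differently. The paper argues vertex-by-vertex: assuming some $3$-valent vertex $v$ has positive limits, it splits into the cases $f(v)\ge 0$ and $f(v)\le 0$, follows a maximal directed path starting (respectively, ending) at $v$ --- along which $f$ has a fixed sign, so $\circulation$ is monotone by \eqref{stokes} --- and reaches a contradiction at the terminal $1$-valent vertex, where the Kirchhoff rule \eqref{3valentcirc} forces the limit $0$ (the same mechanism as in Example~\ref{GKEx}). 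You instead run a maximum-principle argument on an entire connected component $P_0$ of the positivity set: its boundary consists of zeros of $\circulation$, and reading $\diff\circulation=f\,\diff\mu$ at the $f$-maximum and $f$-minimum of $\overline{P_0}$ forces $f\le 0$ and $f\ge 0$ there simultaneously. Both proofs rest on the same three ingredients --- monotonicity \eqref{stokes}, vanishing of $\circulation$ at $1$-valent vertices (which, as you correctly isolate, is exactly where closedness enters), and propagation of sign through $3$-valent vertices by balancedness --- but the paper's path argument is shorter and needs no topology of the positivity set, while yours avoids the case split on the sign of $f(v)$ and yields the global inequality $\circulation\le 0$ on all of $\Gamma$ directly, not merely negativity of the limits at vertices.

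Two small repairs are needed. First, $\circulation$ is undefined at vertices, so $\{\circulation>0\}$ lives in $\Gamma\setminus V(\Gamma)$; for your claim that positive $3$-valent vertices are interior to $P_0$ to be literally true, you must adjoin to the positivity set those $3$-valent vertices all of whose limits are positive (well defined by balancedness) and take $P_0$ to be a component of this enlarged open set --- otherwise the three stubs at such a vertex fall into a priori different components and your ``always an outgoing edge staying in $P_0$'' step breaks. Second, your mechanism for excluding $f(x_\pm)=0$ does not work as stated: if $x_+$ is an interior zero of $\circulation$ with $f(x_+)=0$, the strict-local-minimum property shows the points just above $x_+$ are positive, but they need not lie in $P_0$ (they are separated from it at $x_+$), so no contradiction with the maximality of $f(x_+)$ over $\overline{P_0}$ arises. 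Fortunately strictness is unnecessary: the extremal analysis gives $f(x_+)\le 0$ and $f(x_-)\ge 0$, whence $0\le f(x_-)\le f(x_+)\le 0$ and $f\equiv 0$ on $\overline{P_0}$, which is impossible because the nonempty open set $P_0$ contains an arc of an edge on which $f$ is strictly increasing.
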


\begin{proof}
It suffices to show that if $\Gamma$ is a measured Reeb graph with no boundary vertices, then a circulation function 
$\circulation$ on $\Gamma$ is balanced if and only if it is totally negative. Clearly, if  $\circulation$ is totally negative, 
then it is balanced. Let us prove the converse statement. Let $\circulation$ be a balanced circulation function on $\Gamma$. 
Assume that there exists a $3$-valent vertex $v \in \Gamma$ such that $c_i(v) > 0$ for $i = 0,1,2$. Let us first assume 
that $f(v) \geq 0$. Consider any maximal directed path $\ell$ in $\Gamma$ starting at the vertex $v$ (see Figure \ref{path2}). 
Then $f \geq 0$ at points of the path $\ell$, and the circulation function $\circulation$ has non-negative limits at endpoints 
of $\ell$, since it ``integrates" $f$ along $\ell$. However, at the top endpoint the circulation function tends to $0$.
So, the same argument as in Example \ref{GKEx} shows that $\circulation$ must change 
sign at some $3$-valent vertex $w \in \ell$, which contradicts the fact that  $\circulation$ is balanced. 
Similarly, if $f(v) \leq 0$, then one considers a maximal directed path ending at $v$ and applies the same argument 
to obtain a contradiction. So, we indeed have $c_i(v) < 0$ for all $3$-valent vertices $v \in \Gamma$, q.e.d.
\end{proof}

\begin{example}\label{example:circtorus2}
Consider the  circulation graph $(\Gamma, f, \mu, \circulation)$ for a Morse-type coadjoint orbit 
$\pazocal O \subset \SVect^*(T^2)$ on a torus depicted in Figure \ref{circTorus}, cf. Example \ref{example:circtorus}.
Let $$a_i := \int_{e_i} f\diff \mu\,.$$
The space of circulation functions on $\Gamma$ is one-dimensional and the numbers $a_i$ satisfy the relation
$a_1 + a_2 + a_3 + a_4 = 0$ by Proposition~\ref{circFuncs}.
\par
 
The set of totally negative circulation functions on $\Gamma$ is described by linear inequalities
$$
z < 0, \quad a_1 - z< 0, \quad a_2 + z < 0, \quad a_1 + a_3 -z < 0\,,
$$
i.e. the circulation function $\circulation$ is totally negative if and only if $z \in I_1 \cap I_2$, where $I_1$, $I_2$ are open intervals $I_1 = (a_1, 0)$, $I_2 = (a_1 + a_3, -a_2)$. Using that $f$ is monotonous along edges of $\Gamma$ and the zero average condition $a_1 + a_2 + a_3 + a_4 = 0$, one can easily prove that $a_1 < 0$, and $a_1 + a_3 < -a_2$, so that the intervals  $I_1$, $I_2$ are non-empty. However, these intervals do not need to intersect, so depending on $a_i$'s, the set of totally negative circulation functions on $\Gamma$ may be empty. Furthermore, even if intervals $I_1$, $I_2$ do intersect, the set of totally negative circulation functions on $\Gamma$ is only a bounded interval, while the set of all circulation functions is a line $\R^1$. The latter implies that, roughly speaking, most of Morse-type coadjoint orbits of $\SDiff(T^2)$ corresponding to the graph in Figure \ref{circTorus} do not admit steady solutions of the Euler equation.
\end{example}

The following theorem generalizes the conclusion of Example \ref{example:circtorus2}: 

\begin{theorem}\label{thm:polytope}
Assume that $(\Gamma, f, \mu)$ is a measured Reeb graph with no boundary vertices. Then the set of totally negative circulation functions on $\Gamma$ is a (possibly empty) open convex polytope in the affine space of all circulation functions. In particular, the set of totally negative circulation functions on $\Gamma$ is bounded.
\end{theorem}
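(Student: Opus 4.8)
The plan is to present the totally negative set as the solution set of a finite system of strict linear inequalities in the finite-dimensional affine space of all circulation functions, which immediately exhibits it as an open convex polyhedron, and then to prove boundedness by showing that this polyhedron contains no ray. The ray-exclusion step will reuse the ``top vertex'' argument already employed in Example \ref{GKEx} and the proof of Theorem \ref{thm:steadyclosed}.

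First I would invoke Proposition \ref{circFuncs}: since $\Gamma$ has no boundary vertices, the circulation functions on $\Gamma$ form an affine space $\mathcal A$ with underlying vector space $\Hom_1(\Gamma,\R)$, in particular finite-dimensional of dimension equal to the genus of $\Gamma$. For every $3$-valent vertex $v$ and adjacent edge $e_i$, the limit $c_i(v)$ depends affinely on $\circulation \in \mathcal A$. Hence the total negativity conditions $c_i(v)<0$, ranging over all $3$-valent vertices and their adjacent edges, form a finite collection of strict affine inequalities, so the totally negative set $\pazocal P$ is an intersection of finitely many open half-spaces: an open convex polyhedron. It remains only to show that $\pazocal P$ is bounded.

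To study boundedness I would pass to the model vector space. Two circulation functions differ by an element $h \in \Hom_1(\Gamma,\R)$, that is, by an antiderivative of the zero density; by the Newton--Leibniz property \eqref{stokes} such $h$ is constant on each edge, and its edge-values define a $1$-cycle, i.e. $\partial h = 0$. Adding $h$ changes the limits by $c_i(v)\mapsto c_i(v)+h(e_i)$. Thus $\pazocal P$ is bounded as soon as it contains no ray, i.e. once we show that there is no nonzero $h \in \Hom_1(\Gamma,\R)$ with $h(e)\le 0$ on every branch edge $e$; equivalently, that every nonzero $1$-cycle $h$ is strictly positive on some branch edge.

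The crux is this last claim, and here I would argue exactly as for the maximal directed path in Theorem \ref{thm:steadyclosed}. The cycle relation $\partial h = 0$ forces $h$ to vanish on every edge touching a $1$-valent vertex, so all vertices met by the support of $h$ are $3$-valent. Let $v_{max}$ be the support vertex at which $f$ is largest; every support edge at $v_{max}$ is then incoming. This excludes the type ``one incoming, two outgoing'' (Kirchhoff would force $h=0$ on the unique incoming edge, so $v_{max}$ would not lie in the support), so $v_{max}$ is of type ``two incoming, one outgoing'': its trunk is the outgoing edge and its two branches $e_1,e_2$ are incoming. Since the trunk points toward values $f>f(v_{max})$ it cannot lie in the support, whence $h(e_1)+h(e_2)=h(\mathrm{trunk})=0$ with $(h(e_1),h(e_2))\neq(0,0)$, so $h$ is strictly positive on one of the branches $e_1,e_2$. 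This contradicts $h(e)\le 0$ on all branches and proves the claim, hence that $\pazocal P$ is a bounded open convex polytope. I expect the only real obstacle to be the combinatorial bookkeeping in this final paragraph — the reduction to $3$-valent support vertices and the case analysis at $v_{max}$ — but it is a direct transcription of the max/min argument already used earlier in the paper.
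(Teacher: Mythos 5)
Your proposal is correct, and it proves boundedness by a genuinely different route than the paper. Both proofs treat the polyhedrality part the same way (finitely many strict affine inequalities $c_i(v)<0$, which the paper dismisses as obvious), but for boundedness the paper argues quantitatively: it introduces a partial order on edges ($e' \leq e$ iff there is a directed path from $e'$ to $e$), inducts on the height of an edge, and derives the explicit two-sided bound $\sum_{e' \leq e} \int_{e'} f\,\diff\mu \,\leq\, \circulation^+(e) \leq 0$ valid for every totally negative circulation function, which traps each coordinate in a fixed interval. You instead show the recession cone is trivial: a ray direction is a nonzero $1$-cycle $h \in \Hom_1(\Gamma,\R)$ (constant on edges by \eqref{stokes}, Kirchhoff at vertices by \eqref{3valentcirc}) that is $\leq 0$ on all branch edges, and your support/$v_{max}$ argument — correctly excluding $1$-valent vertices and the ``one incoming, two outgoing'' type, and extracting $h(e_1) = -h(e_2) \neq 0$ at a top vertex of type ``two incoming, one outgoing'' — shows every nonzero cycle is strictly positive on some branch, so no ray exists. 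Your route is shorter and more conceptual, reusing the top-vertex trick of Example \ref{GKEx} and Theorem \ref{thm:steadyclosed}, at the cost of invoking the standard convex-geometry fact that a nonempty open convex polyhedron in finite dimensions is bounded iff it contains no ray; the paper's induction buys more, namely an explicit box (in terms of the moments $\int_e f\,\diff\mu$) containing the polytope, rather than mere boundedness.
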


\begin{proof}
It is obvious from the definition that the set of totally negative circulation functions on $\Gamma$ is an open convex polyhedron. So, it suffices to prove that the latter set is bounded. Since a circulation function $\circulation$ is uniquely determined by numbers $\circulation^+(e)$ (recall that the notation $\circulation^+(e)$ stands for the limit of the circulation function $\circulation(x)$ as $x$ tends to the endpoint of the edge $e$ along $e$), it suffices to show that for any edge $e \in \Gamma$ there exist numbers $m, M \in \R$ such that for any totally negative  circulation function $\circulation$ on $\Gamma$ we have $m \leq \circulation^+(e) \leq M$.\par
Consider the following poset structure on the set of edges of $\Gamma$. Say that $e_1 \leq e_2$ if there exists a directed path $\ell$ in $\Gamma$ whose first edge is $e_1$, and whose last edge is $e_2$. Let us also define the height $h(e)$ of an edge $e \in \Gamma$ as the maximal length (i.e. the number of edges) of a directed path whose last edge is $e$. Using properties of circulation functions, the total negativity condition, and induction by height, it can be easily shown that for any edge  $e \in \Gamma$ and any totally negative circulation function $\circulation$ on $\Gamma$ one has
$$
 \circulation^+(e) \geq \sum_{e' \leq e} \int_e f\diff \mu\,.
$$
At the same time, by total negativity condition one has $ \circulation^+(e) \leq 0$, so the set of totally negative circulation functions on $\Gamma$ is indeed bounded, q.e.d.
\end{proof}

\begin{corollary}\label{cor:bound}
Assume that $M$ is a closed connected symplectic surface of genus $\geq 1$. Let also $(\Gamma, f, \mu)$ be a measured Reeb graph compatible with $M$ and satisfying the condition 
$
\int_\Gamma f\diff \mu = 0
$. 
Then the set of coadjoint orbits of $\SDiff(M)$ corresponding to the graph $\Gamma$ and admitting steady solutions of the Euler equation is an open convex polytope in the affine space of all coadjoint orbits of $\SDiff(M)$ corresponding to $\Gamma$.
\end{corollary}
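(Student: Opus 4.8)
The plan is to show that this corollary is a repackaging of Theorems \ref{thm:sdiffM}, \ref{thm:steadyclosed}, and \ref{thm:polytope}, translated from the language of circulation functions into the language of coadjoint orbits. First I would fix the measured Reeb graph $(\Gamma, f, \mu)$ and recall that, by Theorem \ref{thm:sdiffM}, the Morse-type coadjoint orbits of $\SDiff(M)$ whose associated measured Reeb graph equals $(\Gamma, f, \mu)$ are in one-to-one correspondence with circulation functions $\circulation$ on $\Gamma$. Since $M$ is closed, the graph $\Gamma$ has no boundary vertices, and the relevant density is $\rho(I) = \int_I f\diff\mu$. By Proposition \ref{circFuncs}(ii) this density admits an antiderivative if and only if $\rho(\Gamma) = \int_\Gamma f\diff\mu = 0$, which is exactly the hypothesis of the corollary; hence the set of circulation functions on $\Gamma$ is nonempty. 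By Proposition \ref{circFuncs}(iii) it is an affine space modeled on $\Hom_1(\Gamma, \R)$, whose dimension equals the genus $\varkappa \geq 1$ of $M$.

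This affine space is precisely the ``affine space of coadjoint orbits corresponding to $\Gamma$'' referred to in the statement, and the identification of Theorem \ref{thm:sdiffM} is affine: as shown in the proof of that theorem, for fixed vorticity the difference of the circulation functions of two cosets $[\alpha]$ and $[\alpha+\gamma]$ is the image $\rho[\gamma]$ of the period homomorphism on $\Hom^1(M,\R)$, so varying $[\gamma]$ translates the circulation function by a vector in $\Hom_1(\Gamma,\R)$. Under this identification, Theorem \ref{thm:steadyclosed} asserts that an orbit admits a steady flow if and only if its circulation function is totally negative. Therefore the subset of orbits corresponding to $\Gamma$ and admitting a steady flow corresponds exactly to the set of totally negative circulation functions on $\Gamma$.

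Finally I would invoke Theorem \ref{thm:polytope}, which states that the set of totally negative circulation functions on a measured Reeb graph with no boundary vertices is a (possibly empty) open bounded convex polytope inside the affine space of all circulation functions. Transporting this polytope through the affine bijection of the first step yields the assertion. The genuine mathematical content is entirely contained in the three cited theorems, so I do not expect a substantive obstacle here; the only point requiring verification is that the bijection of Theorem \ref{thm:sdiffM} respects the affine structures, which is immediate from its proof. The role of the two hypotheses is merely auxiliary: requiring $\int_\Gamma f\diff\mu = 0$ guarantees (via Proposition \ref{circFuncs}(ii)) that the ambient affine space is nonempty, while requiring genus $\geq 1$ guarantees that it is positive-dimensional, so that the polytope statement is both non-vacuous and meaningful.
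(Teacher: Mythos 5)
Your proposal is correct and follows exactly the route the paper intends: the corollary is stated without a separate proof precisely because it is the combination of Theorem \ref{thm:sdiffM} (orbits corresponding to $\Gamma$ form an affine space over $\Hom_1(\Gamma,\R)$), Theorem \ref{thm:steadyclosed} (steady flow $\Leftrightarrow$ totally negative circulation function), and Theorem \ref{thm:polytope} (total negativity cuts out an open bounded convex polytope). Your additional checks --- that $\int_\Gamma f\,\diff\mu = 0$ ensures nonemptiness via Proposition \ref{circFuncs}, that genus $\geq 1$ rules out the trivial zero-dimensional case, and that the identification of Theorem \ref{thm:sdiffM} is affine --- are exactly the points the paper leaves implicit (the genus remark mirroring the paper's own remark about the sphere).
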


\begin{remark}
Loosely speaking, Corollary \ref{cor:bound} states that if $M$ is of genus $\geq 1$, then ``most'' Morse-type coadjoint orbits of $\SDiff(M)$ do not admit steady solutions of the Euler equation. Note that the conclusion of Corollary \ref{cor:bound} remains true for the sphere as well, however in this case the affine space of all coadjoint orbits of $\SDiff(M)$ corresponding to $\Gamma$ is a point (i.e., there is a one-to-one correspondence between coadjoint orbits and measured Reeb graphs), so the statement of Corollary \ref{cor:bound} becomes trivial.
\end{remark}

  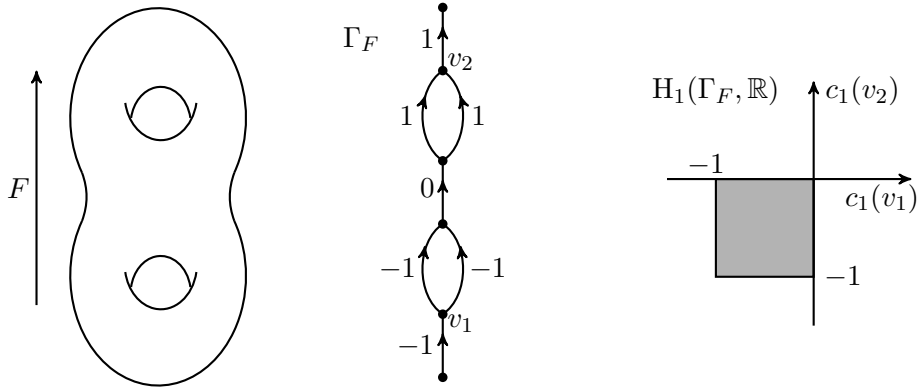
\begin{figure}[t]
\centerline{
\begin{tikzpicture}[thick, scale=1.2, rotate = -90]
\node at (3.2,1) ()
{
\begin{tikzpicture}[thick, xscale=0.8, yscale = 1, rotate = 90]
\draw   (1.68,3.2) arc (60:300:1.46cm);
\draw   (2.32,3.2) arc (120:-120:1.46cm);
\draw (1.68,3.2)  .. controls (1.9, 3.1) and (2.1, 3.1) ..  (2.32,3.2) ;
\draw (1.68,0.67)  .. controls (1.9, 0.77) and (2.1, 0.77) ..  (2.32,0.67) ;
    \draw   (1,1.3) arc (260:100:0.6cm);
    \draw   (0.8,1.4) arc (-80:80:0.5cm);
        \draw   (3.25,1.3) arc (260:100:0.6cm);
    \draw   (3.05,1.4) arc (-80:80:0.5cm);
\end{tikzpicture}
};
\draw [->] (4.4,-0.3) -- (1.8,-0.3);
\node [vertex] at (1.1,4.2) (nodeH) {};
\node [vertex] at (1.8,4.2) (nodeE) {};
\node at (1.45,4.03) () {$1$};
\node at (1.7,4.4) () {$v_2$};
\node at (2.3,3.8) () {$1$};
\node at (2.3,4.6) () {$1$};
\node at (4,3.7) () {$-1$};
\node at (4,4.7) () {$-1$};
\node [vertex] at (2.8,4.2) (nodeA) {};
\node at (3.1,4.03) () {$0$};
\node [vertex] at (3.5,4.2) (nodeD) {};
\node [vertex] at (4.5,4.2) (nodeF) {};
\node at (4.6,4.4) () {$v_1$};
\node [vertex] at (5.2,4.2) (nodeK) {};
\node at (4.85,3.9) () {$-1$};
\draw[->-] (nodeA)  .. controls +(-0.2,-0.3) and +(0.2,-0.3) ..  (nodeE);
\draw[->-] (nodeA)  .. controls +(-0.2,0.3) and +(0.2,0.3) ..  (nodeE);
\draw[-<-] (nodeD)  .. controls +(0.2,-0.3) and +(-0.2,-0.3) ..  (nodeF);
\draw[-<-] (nodeD)  .. controls +(0.2,0.3) and +(-0.2,0.3) ..  (nodeF);
\draw [->-](nodeD) -- (nodeA);
\draw [->-](nodeK) -- (nodeF);
\draw [->-](nodeE) -- (nodeH);
\fill (nodeA) circle [radius=1.5pt];
\fill (nodeD) circle [radius=1.5pt];
\fill (nodeE) circle [radius=1.5pt];
\fill (nodeF) circle [radius=1.5pt];
\fill (nodeH) circle [radius=1.5pt];
\fill (nodeK) circle [radius=1.5pt];
\node at (3.1,-0.5) (nodeL) {$F$};
\node at (1.45,3.3) (nodeL) {$\Gamma_F$};
\node at (3.2, 8) () {
\begin{tikzpicture}[thick, scale = 1.3]
\draw(10,3) -- (11,3) -- (11,2) -- (10,2) -- cycle;
 \fill[opacity = 0.3] (10,3) -- (11,3) -- (11,2) -- (10,2) -- cycle;
 \node at (9.9,3.15) () {$-1$};
        \node at (11.3,2) () {$-1$};
 \draw [->] (9.5,3) -- (12,3);
 \node at (11.7, 2.8) () {$c_1(v_1)$};
  \node at (11.5, 3.9) () {$c_1(v_2)$};
    \node at (10, 3.9) () {$\Hom_1(\Gamma_F, \R)$};
 \draw [->] (11,1.5) -- (11,4);
 \end{tikzpicture}
 };
\end{tikzpicture}
}
\caption{An example of a vorticity $F$ on a pretzel, its measured Reeb graph, and the corresponding polytope of totally negative circulation functions.}\label{pretzel2}
\end{figure}

 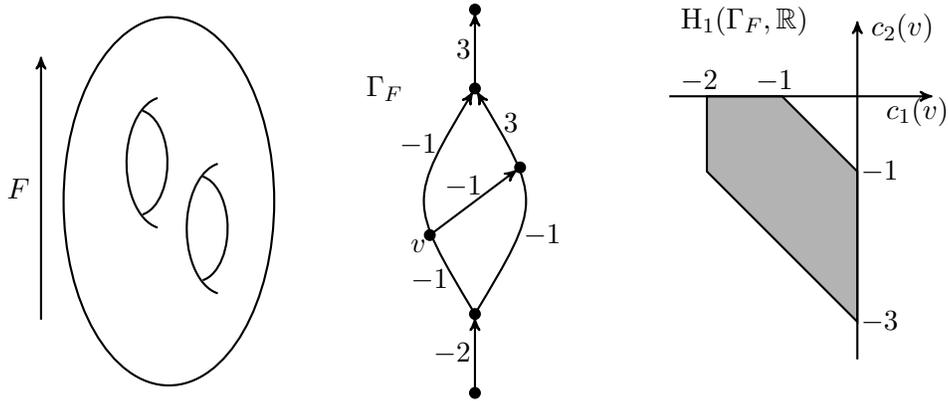
\begin{figure}[t]
\centerline{
\begin{tikzpicture}[thick, scale = 1.5]
  \node at (2.5,2) ()
  {
  \begin{tikzpicture}[thick, xscale = 1, yscale =1.75]
    \draw (2.4,1.9) ellipse (1.4cm and 1.4cm);
        \draw   (2.25,1.7) arc (260:100:0.5cm);
    \draw   (2.05,1.8) arc (-80:80:0.4cm);
        \draw   (3.05,1.2) arc (260:100:0.5cm);
    \draw   (2.85,1.3) arc (-80:80:0.4cm);
        \draw  [->] (0.7,1) -- (0.7,3);
    \node at (0.4,2) (nodeA) {$F$};
    \end{tikzpicture}
    };
    %
    \node [vertex] at (5.5,0.3) (nodeC) {};
    \node [vertex]  at (5.5,1) (nodeD) {};
    \node at (5.3,0.65) () {{$-2$}};
        \node at (5.1,1.3) () {$-1$};
                \node at (6.1,1.7) () {$-1$};
                  \node at (5.4,2.13) () {$-1$};
                                          \node at (5,2.5) () {$-1$};
                                              \node at (5.82,2.7) () {$3$};
                                              \node at (5.4,3.35) () {$3$};
    \node [vertex] at (5.5,3) (nodeE) {};
    \node [vertex]  at (5.5,3.7) (nodeF) {};
    \node [vertex]  at (5.1,1.7) (nodeG) {};    
        \node []  at (5,1.6) () {$v$};    
                \node [vertex]  at (5.9,2.3) (nodeI) {};    
    \draw  [a] (nodeC) -- (nodeD);
    \fill (nodeC) circle [radius=1.5pt];
    \fill (nodeD) circle [radius=1.5pt];
    \fill (nodeE) circle [radius=1.5pt];
    \fill (nodeF) circle [radius=1.5pt];
        \fill (nodeG) circle [radius=1.5pt];
                \fill (nodeI) circle [radius=1.5pt];
    \draw  [a] (nodeE) -- (nodeF);
    \draw[a] (nodeG)  --  (nodeI);
     \draw[a] (nodeD)  .. controls +(0.6,+1) ..  (nodeE);     
          \draw[a] (nodeD)  .. controls +(-0.6,+1) ..  (nodeE);     
             \node at (4.7, 3) () {$\Gamma_F$};  
\node at (8.5,2.2) () {
\begin{tikzpicture}[thick, scale = 1]
\draw (9,3) -- (10,3) -- (11,2) -- (11,0) -- (9,2) -- cycle;
 \fill[opacity = 0.3]  (9,3) -- (10,3) -- (11,2) -- (11,0) -- (9,2) -- cycle;
 \node at (8.9,3.2) () {$-2$};
  \node at (9.9,3.2) () {$-1$};
    \node at (11.3,2) () {$-1$};
        \node at (11.3,0) () {$-3$};
 \draw [->] (8.5,3) -- (12,3);
 \node at (11.8, 2.8) () {$c_1(v)$};
  \node at (11.6, 3.9) () {$c_2(v)$};
    \node at (9.5, 4) () {$\Hom_1(\Gamma_F, \R)$};
 \draw [->] (11,-0.5) -- (11,4);
 \end{tikzpicture}
 };
\end{tikzpicture}
}
\caption{A vorticity $F$ on a pretzel with a different measured Reeb graph and the corresponding polytope of totally negative circulation functions.}\label{pretzel3}
\end{figure}

\begin{example}
Figures \ref{pretzel}, \ref{pretzel2}, \ref{pretzel3} show several vorticity functions on a pretzel with different measured Reeb graphs $\Gamma_F$, as well as the corresponding polytopes of totally negative circulation functions. The numbers at edges are integrals $\int_e f\diff \mu$. In Figures \ref{pretzel} and \ref{pretzel3}, the set of circulation functions on $\Gamma_F$ is parametrized by limits $c_1(v), c_2(v)$ at branches of the vertex $v$. In Figure \ref{pretzel2}, the set of circulation functions is parametrized by limits $c_1(v_1), c_1(v_2)$ at left branches of the vertices $v_1$, $v_2$.
\end{example}
\bigskip



\section{Appendix A: Existence of steady triples in the vicinity of  hyperbolic levels}\label{app:proof-eight}.

In this appendix we prove technical Proposition \ref{step3} that near any figure-eight level of the vorticity function 
there exists a steady triple.

\tikzstyle{vertex} = [coordinate]
 \begin{figure}[t]
\centerline{
\begin{tikzpicture}[thick, scale = 1.75]
\node [vertex] (a) at (0,0) {};
\node [vertex] (z1) at (-0.5,0.3) {};
\node [vertex] (w11) at (-0.82,0.44) {};
\node [vertex] (w12) at (-0.73,-0.4) {};
\node [vertex] (z2) at (-0.5,-0.3) {};
\node [vertex] (z3) at (0.5,-0.3) {};
\node [vertex] (z4) at (0.5,0.3) {};
\fill (a) circle [radius=1.5pt];
\fill (z1) circle [radius=1.5pt];
\fill (w11) circle [radius=1.5pt];
\fill (w12) circle [radius=1.5pt];
\fill (z2) circle [radius=1.5pt];
\fill (z3) circle [radius=1.5pt];
\fill (z4) circle [radius=1.5pt];
\begin{pgfinterruptboundingbox}
\draw [->--, -->-] (a) .. controls +(-3,2) and +(-3,-2) .. (a);
\draw  [->--, -->-]  (a) .. controls +(3,-2) and +(3,2) .. (a);
\draw (0,0.3)  .. controls +(-3.4,2) and +(-3.4,-2) .. (0,-0.3);
\draw (0,0.3)  .. controls +(3.4,2) and +(3.4,-2) .. (0,-0.3);
\draw (-0.4, 0)  .. controls +(-2,1.3) and +(-2,-1.3) .. (-0.4, 0);
\draw (0.4, 0)  .. controls +(2,1.3) and +(2,-1.3) .. (0.4, 0);
  \fill[opacity = 0.2] (0,0.3)  .. controls +(-3.4,2) and +(-3.4,-2) .. (0,-0.3);
    \fill[opacity = 0.2] (0,0.3)  .. controls +(3.4,2) and +(3.4,-2) .. (0,-0.3);
      \fill[color = white, opacity = 1] (-0.4, 0)  .. controls +(-2,1.3) and +(-2,-1.3) .. (-0.4, 0);
            \fill[color = white, opacity = 1] (0.4, 0)  .. controls +(2,1.3) and +(2,-1.3) .. (0.4, 0);
            \end{pgfinterruptboundingbox}
            \node () at (-2.1,0) {$\ell_1$};
                        \node () at (-2.3,0.3) {$U_1$};
                \node () at (2.1,0) {$\ell_2$};
                   \node () at (2.35,0.3) {$U_2$};
                \node () at (0.02,0.18) {$O$};    
                \fill[opacity = 0.2]  (a) circle [radius=35pt];      
                \node () at (0,1) {$U$};    
                \node () at (-0.3,0.33) {$Z_{1}$};
                      \node () at (-1,0.4) {$W_{1}$};
                            \node () at (-0.95,-0.3) {$W_{3}$};
                          \node () at (-0.32,-0.33) {$Z_{3}$};
                            \node () at (0.7,-0.28) {$Z_{4}$};
                              \node () at (0.7,0.28) {$Z_{2}$};
                        \node (R11) at (-0.5,1.4) {$R_{1}$};
                       \draw [->] (R11) -- (-0.5, 0.4);
                                               \node (R12) at (-0.5,-1.4) {$R_{3}$};
                       \draw [->] (R12) -- (-0.5, -0.4);
                                       \node (R21) at (0.4,1.4) {$R_{2}$};
                       \draw [->] (R21) -- (0.4, 0.35);
                                                              \node (R22) at (0.4,-1.4) {$R_{4}$};
                       \draw [->] (R22) -- (0.4, -0.35);
                                                                                     \node (U0) at (0,-1.5) {$U_0$};
                       \draw [->] (U0) -- (0, -0.15);
                \draw (-0.8, 0.23) .. controls +(-0.05,+0.22) .. (-0.7, 0.64);
                \draw (-0.2, 0.41) .. controls +(0.1, -0.2) .. (-0.5,0.07);
                       \draw (-0.8, -0.23) .. controls +(0.1,-0.22) .. (-0.7, -0.64);
                        \draw  (-0.15, -0.38) .. controls +(0.1, 0.2) .. (-0.5,-0.07);
                           \draw (0.85, 0.25) .. controls +(0.05,+0.22) .. (0.7, 0.64);
                           \draw (0.2, 0.41) .. controls +(0, -0.2) .. (0.5,0.07);
                              \draw (0.85, -0.25) .. controls +(0.1,-0.22) .. (0.7, -0.64);
                               \draw (0.2, -0.41) .. controls +(0, 0.2) .. (0.5,-0.07);
\end{tikzpicture}
}
\caption{Constructing a steady flow near a hyperbolic fiber. }\label{hyperbolicNeighborhood}
\end{figure}
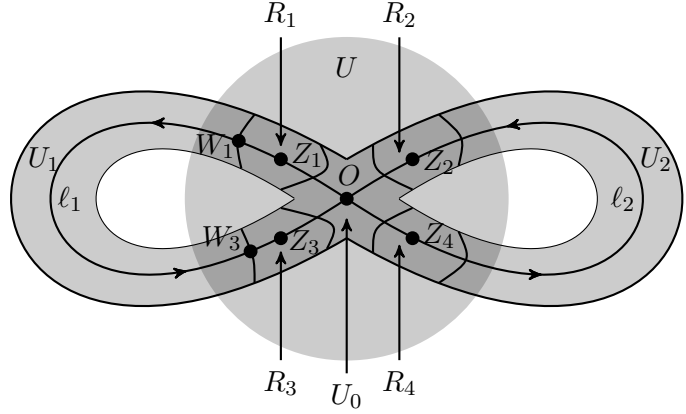

\begin{proposition}{\bf (=\ref{step3})}
Let $(M, \omega)$ be a compact symplectic surface, possibly with boundary, and let $F \colon M \to \R$  be a simple Morse function on $M$. Let also $\pazocal E$ be a figure-eight level of $F$. Then there exists a steady $F$-triple $(\alpha, J, H)$ in the neighborhood of $\pazocal E$. Moreover, for any non-zero numbers $c_1, c_2 \in \R$ having the same sign, the triple $(\alpha, J, H)$ can be chosen in such a way that
$$
\int_{\ell_i} \alpha = c_i\,,
$$
where $\ell_i$ are the loops constituting the level $\pazocal E$ (see Figure \ref{hyperbolicNeighborhood}).
\end{proposition}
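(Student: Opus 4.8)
The plan is to cover a neighborhood of the figure-eight $\pazocal E$ by a small piece $U_0$ around the saddle $O$ together with the three annular regions cut out by the critical level, to build an $F$-steady triple on each piece from the propositions already proved, and then to glue these into a single triple; the equal-sign hypothesis on $c_1,c_2$ is exactly what will make the gluing go through.

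First I would fix coordinates. By the Morse--Darboux Lemma \ref{MDL} there are coordinates $(P,Q)$ on a neighborhood $U_0$ of $O$ with $\omega=\diff P\wedge\diff Q$ and $F=\zeta(PQ)$, $\zeta'(0)\neq0$; replacing the chart by $(Q,-P)$ if necessary we may take $\zeta'(0)>0$, so the branch side of the corresponding $3$-valent vertex is $S:=PQ>0$ and the trunk side is $S<0$ (see Figure \ref{hyperbolicNeighborhood}). The two branch quadrants $\{P,Q>0\}$ and $\{P,Q<0\}$ limit, as $S\to0^+$, to the two loops $\ell_1,\ell_2$ of $\pazocal E$. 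Away from $O$ the level $\pazocal E$ bounds three annuli on which $\diff F\neq0$: the trunk annulus $\{F<\zeta(0)\}$ and the two branch annuli around $\ell_1$ and $\ell_2$; on each of these Lemma \ref{ALT} supplies action--angle coordinates.

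Next I would assemble the local triples. On $U_0$, Proposition \ref{step2} produces an $F$-steady triple $(\alpha_0,J_0,H_0)$ whose free sign I fix to be $\eps:=-\sgn{c_1}=-\sgn{c_2}$ (legitimate precisely because $c_1,c_2$ share a sign); by Lemma \ref{lem:sign} the circulation of $\alpha_0$ over every nearby level then has the common sign $\sgn{c_1}$, matching both branches. On the trunk and two branch annuli I would use Proposition \ref{step0} to build triples whose circulations over $\ell_1,\ell_2$ equal $c_1,c_2$, and whose circulation over the trunk equals $c_1+c_2$ (forced by the Kirchhoff rule \eqref{3valentcirc}). Since all three of $c_1,c_2,c_1+c_2$ are nonzero and of one sign, on a sufficiently small neighborhood the circulation function stays nonzero and of constant sign, so each form built on an annulus is linearly independent of $\diff F$ up to $\pazocal E$.

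The heart of the proof, and the step I expect to be the main obstacle, is gluing the saddle form $\alpha_0$ to the annular forms across the four quadrant-transition regions while preserving $\diff\alpha=F\omega$, keeping $\alpha$ nowhere proportional to $\diff F$ off $\pazocal E$, and arriving at a single stream function $H=H(F)$. I would perform the gluing on cylinders (where $\diff F\neq0$) by the interpolation of Proposition \ref{step4}; the hypothesis to verify is that the flux function $\eta(z)=\int\alpha+\int_{C_z}F\omega$ never changes sign, and this is exactly the point where the equal signs of $c_1,c_2$ enter, since $\eta$ is the circulation function along the relevant graph segment, which we have just arranged to be nonzero of constant sign. Finally, with a global $\alpha$ satisfying $\diff\alpha=F\omega$ and $\alpha\wedge\diff F\neq0$ off $\pazocal E$ and realizing the prescribed circulations, I would manufacture the compatible $J$ and the function $H=H(F)$ exactly as in the closing paragraph of the proof of the ``if'' part of Theorem \ref{thm:steady}, by setting $J^*\alpha=-\diff H$, $J^*\diff H=\alpha$ and $g(\cdot,\cdot)=\omega(\cdot,J\cdot)$; the common sign is again what guarantees that $\diff H/\diff F$ has one sign across the two branches meeting at $O$, so that $H$ is a genuine smooth function of $F$ near the saddle and the resulting metric $g$ is positive definite.
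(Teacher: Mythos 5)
Your outline has a genuine gap at its central step, and it is exactly the gap that Appendix A of the paper exists to fill. The three ``annular regions cut out by the critical level'' are open sets disjoint from $\pazocal E$, and $U_0$ is a small disk around $O$; their union therefore does \emph{not} cover a neighborhood of $\pazocal E$: the arcs of $\ell_1$, $\ell_2$ away from the saddle are covered by nothing. Neither of the tools you invoke can repair this, because both are confined to regions where $\diff F \neq 0$: Proposition \ref{step0} requires a cylinder on which $F$ has no critical points (the closures of your three annuli all contain $O$, where $\diff F = 0$, and the boundary curves $\ell_i \cup \{O\}$ have corners there), and Proposition \ref{step4} interpolates between forms given near the two boundary circles of such a cylinder, so it cannot bridge across the critical level. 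Worse, the saddle form produced by Proposition \ref{step2} is defined only on a small disk around $O$, not in a neighborhood of any complete $F$-level, so it cannot even serve as one of the two boundary data that Proposition \ref{step4} needs. After your steps you have forms on $U_0$ and on cylinders strictly away from $\pazocal E$, and no mechanism at all for producing a smooth $\alpha$ with $\diff\alpha = F\omega$ near the arcs of the figure-eight, let alone one that matches the saddle form and realizes the circulations $c_1$, $c_2$.

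The paper's proof supplies precisely this missing construction. It covers each arc of $\pazocal E$ outside the saddle chart by a region $V = U_1 \cup R_1 \cup R_3$ that \emph{straddles} the critical level, where $R_1$, $R_3$ are small rectangles inside the saddle chart on which $\alpha = A_i(F,G_i)\,\diff G_i$ in coordinates $(F,G_i)$ (this uses the positivity of $\omega^{-1}(\alpha,\diff H)$ away from $O$, i.e.\ linear independence of $\alpha$ and $\diff F$ there). It then extends the level coordinate $G$ along the arc and defines $\alpha := A(F,G)\,\diff G$, with
$$
A(F,G) := A^0(G) + \int_{F(O)}^{F} \frac{F}{\{F,G\}}\,\diff F\,,
$$
where $A^0$ extends $A_1(F(O),\cdot)$ and $A_3(F(O),\cdot)$ along the arc with constant sign $\eps$. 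Since $\{F,G\} \neq 0$ on $V$, this formula is smooth \emph{across} the critical value $F = F(O)$; it satisfies $\diff\alpha = F\omega$, and it agrees with the saddle form on $R_1$, $R_3$ because both solve $\partial A/\partial F = F/\{F,G\}$ with the same data at $F = F(O)$. The prescribed circulations then come from the freedom in choosing $A^0$ (any value of $\int A^0\,\diff G$ of sign $-\eps$ is attainable) combined with shrinking the rectangles $R_i$ toward $O$, and $J$, $H$ are produced at the end much as you describe. So the correct repair of your proposal is to replace the Proposition \ref{step0}/\ref{step4} steps by this explicit extension across $\pazocal E$; those two propositions only enter later, in the proof of the ``if'' part of Theorem \ref{thm:steady}, where they join the already-constructed local triples along the regular parts of the foliation.
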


\begin{proof}
First, we apply Lemma \ref{step2} to construct an $F$-steady triple $(\alpha, J, H)$ 
such that $ \sgn (\diff H/\diff F) = \eps := -\sgn c_1 = -\sgn c_2\, $
in the neighborhood $U$ of the singular point $O \in \pazocal E$. Since $H$ is a function of $F$, it extends to the neighborhood of $\pazocal E$ in a unique way.\par
Consider points $Z_{1}, Z_{2}, Z_{3}, Z_{4} \in U \cap \pazocal E$ depicted in Figure \ref{hyperbolicNeighborhood}. Note that in $U$ we have 
\begin{align}\label{ori}
\omega^{-1}(\alpha, \diff H) = \omega^{-1}(J^*\alpha, \alpha) = g^{-1}(\alpha, \alpha)\,,
\end{align}
where $g$ is the Riemannian metric given by $g(u_1,u_2) = \omega(u_1, Ju_2)$. Therefore, we have
$
\omega^{-1}(\alpha, \diff H) > 0
$
at all points except the point $O$. In particular, the forms $\alpha$ and $\diff H$ are linearly independent at the 
points $Z_{i}$. The latter implies that there exists  functions $G_{i}$ in  neighborhoods of points $Z_{i}$ which form a 
positive coordinate system with $F$ (i.e.  $   (\diff G_{i} \wedge \diff F)/\omega> 0$) and such that
$\alpha = A_{i}(F, G_{i}) \,\diff G_{i}$
for certain smooth functions $A_{i}$. 
\par
Let $R_{i}$ be a small rectangle in $(F,G_{i})$ coordinates near $Z_{i}$. Taking, if necessary, a smaller neighborhood 
of $\pazocal E$, we may assume that the complement to $R_{i}, \, i=1,...,4$ in this neighborhood consists 
of three regions $U_0, U_1,U_2$ where $O \in U_0$  (see Figure \ref{hyperbolicNeighborhood}). 
\par
 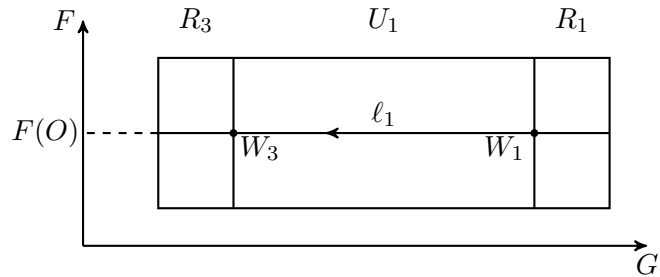
\begin{figure}[b]
\centerline{
\begin{tikzpicture}[thick, scale = 1]
\draw (0,0) -- (6,0) -- (6,2) -- (0,2) -- cycle;
\draw (1,0) -- (1,2);
\draw (5,0) -- (5,2);
\node () at (0.5, 2.5) {$R_{3}$};
\node () at (3, 2.5) {$U_{1}$};
\node () at (3, 1.25) {$\ell_{1}$};
\node () at (5.5, 2.5) {$R_{1}$};
\draw [->] (-1,-0.5) -- (-1, 2.5);
\node () at (-1.25,2.5) {$F$};
\draw [->] (-1,-0.5) -- (6.5, -0.5);
\node () at (6.5,-0.75) {$G$};
\draw [-<-] (0,1) -- (6,1);
\draw [dashed] (0,1) -- (-1, 1);
\fill (1,1) circle [radius=1.5pt];
\fill (5,1) circle [radius=1.5pt];
\node () at (-1.5,1) {$F(O)$};
\node () at (1.35,0.8) {$W_{3}$};
\node () at (4.6,0.8) {$W_{1}$};
\end{tikzpicture}
}
\caption{Region $V =  U_1 \cup R_{1} \cup R_{3}$. }\label{regionV1}
\end{figure}
Consider the region $V = U_1 \cup R_{1} \cup R_{3}$. Extend the functions $G_{1}$ and $G_{3}$ (maybe shifting 
one of them by a constant)  from $R_{1}$ and  $R_{3}$  to a function $G$ defined in the region $V$  in such a way 
that $(G, F)$ is a positive coordinate system in $U_1$. (This means that the orientation defined by $\diff G \wedge \diff F$ 
coincides with the symplectic orientation or the function $G$ decreases along $\ell_1$.)
In coordinates $ (G, F)$, the region $V$ is a rectangle depicted in Figure \ref{regionV1}. Our aim is to extend the 1-form 
$\alpha$ defined in $R_{1}$, $R_{3}$ to $V$.  First, note that in regions $R_{1}, R_{3}$ we have
\begin{align}\label{ori2}
\omega^{-1}(\alpha, \diff F) =A_{i}(F, G) \,\omega^{-1}(\diff G, \diff F)
=-A_{i}(F, G) \,\{F,G\}\,,
\end{align}
where  $\{F,G\}$ is the Poisson bracket associated with the symplectic structure~$\omega$. This, in particular, implies that 
$\sgn A_{i} = \sgn (\diff H/\diff F) = \eps\,.$
We first extend the functions $A_{1}(F(O), G)$ and  $A_{3}(F(O), G)$ (where $F(O)$ is the value of $F$ 
at the level $\pazocal E$)  defined on arcs $\ell_1 \cap R_{1}$ and $\ell_1 \cap R_{3}$ 
 to a function $A^0(G)$ defined on $\ell_1 \cap V$ and such that $\sgn A^0 = \sgn A_{i} = \eps$. Now define 
 a function
 $$
A(F, G) :=A^0(G) +\! \!\!\int\limits_{F(O)}^F\!\! \frac{F}{\{F,G\}}\diff F\,.
$$
We claim that this function coincides with $A_i$ in the regions $R_i$. Indeed, by taking the exterior differential 
in the equation $\alpha = A_{i}(F, G_{i}) \,\diff G_{i}$,
we get $F\omega = \diffX{F}{A_{1}(F, G)} \,\diff F \wedge \diff G$, i.e., 
$F = \diffX{F}{A_{1}(F, G)}{} \cdot \{F,G\}\,.$
Then the corresponding derivatives coincide:
\begin{align}\label{eqDer}
\diffX{F}{A_{1}(F, G)}=  \frac{F}{\{F,G\}} = \diffX{F}{A(F, G)}\,,
\end{align}
and together with $A(F(O), G) =A^0(G)= A_{1}(F(O), G),$ equation \eqref{eqDer} implies that $A = A_{1}$ 
in the region $R_{1}$. Similarly, $A = A_{3}$ in $R_{3}$. Now, we extend the 1-form $\alpha$ to $V$ by setting
$\alpha := A \,\diff G.$
Then $\diff \alpha = F\omega$, as desired. Repeating the same procedure for the region 
$\tilde V=U_2 \cup R_{2} \cup R_{4}$, 
we obtain a 1-form $\alpha$ defined in the whole neighborhood of $\pazocal E$ and satisfying 
$\diff \alpha = F\omega$. Furthermore,  in $V$ we have
$$
\omega^{-1}(\alpha, \diff H) = \diffFXp{H}{F}\, A(F, G) \,\omega^{-1}(\diff G, \diff F) > 0
$$
 in a sufficiently small neighborhood of $\pazocal E \cap V$ and, similarly, for $\pazocal E \cap \tilde V$. 
 This allows one to extend the almost complex structure $J$ to a neighborhood of $\pazocal E$ by setting
$J^*\alpha := -\diff  H,\,  J^*\diff H := \alpha\,.$
Thus, the first statement is proved. 

To prove the second statement denote by $W_{i}$ be the intersection point of $\ell_1$ 
and the common boundary of the regions $R_{i}$ and $U_1$ 
(see Figures~\ref{hyperbolicNeighborhood},~\ref{regionV1}). Then
\begin{align}\label{partialIntegral}
\int\limits_{W_{1}}^{W_{3}} \alpha \,\, = \!\!\!\int\limits_{G(W_{1})}^{G(W_{3})}\!\!\!A^0(G)\,\diff G\,.
\end{align}
By using ambiguity in the choice of a smooth function $A^0(G)$  extending the functions $A_{i}(F(O), G)$ and satisfying 
$\sgn A^0 = \eps$, one can choose it so that the integral \eqref{partialIntegral} is equal to any $c$ (note that 
$G(W_{1})>G(W_{3})$) provided that  $\sgn c = -\eps$. Furthermore,  by moving the points $Z_{1}, Z_{3}$ 
closer to $O$ and shrinking the regions $R_{1}, R_{3}$, one can make the integral~\eqref{partialIntegral} arbitrary close to 
$\int_{\ell_1} \alpha$. Since $\sgn c_1 = -\eps$, this implies that one can choose $\alpha$ such that
$\int_{\ell_1} \alpha = c_1\,$. The proof for $\ell_2$ is analogous.
\end{proof}

\section{Appendix B: Casimir invariants of the 2D Euler equation}
Above we classified coadjoint orbits of the group $\SDiff(M)$ in terms of graphs with some additional structures, 
see Theorem \ref{thm:sdiffM}. However, for applications, it is important to describe numerical invariants of the coadjoint action, i.e., Casimir functions. We begin with the description of such invariants for functions on symplectic surfaces.\par
Let $(M, \omega)$ be a compact connected symplectic surface, possibly with boundary, and let $F$ be a simple Morse function on $M$. With each edge $e$ of the measured Reeb graph $\Gamma_F = (\Gamma, f ,\mu)$, one can associate an infinite sequence of moments
$$
m_{i,e}(F) = \int_e f^i \,\diff \mu = \int_{M_e}\!\! F^i \,\omega\,,
$$
where $ i =0,1,2,\dots$, and $M_e = \pi^{-1}(e)$ for the natural projection $\pi \colon M \to \Gamma$. Obviously, the moments $m_{i,e}(F) $ are invariant under the action of $\SDiff(M)$ on simple Morse functions. Moreover, they form a complete set of invariants in the following sense:
\begin{theorem}
Let $(M, \omega)$ be a compact connected symplectic surface, possibly with boundary, and let $F$ and $G$ be simple Morse functions on $M$. Assume that $\phi \colon \Gamma_F \to \Gamma_G$ is an isomorphism of abstract directed graphs which preserves moments on all edges. Then $\Gamma_F$ and $\Gamma_G$ are isomorphic as measured Reeb graphs, and there exists a symplectomorphism $\Phi \colon M \to M$ such that $\Phi_*F = G$.
\end{theorem}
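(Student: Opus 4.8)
The plan is to reduce the statement to Theorem~\ref{thm:sdiff-functions} (Theorem~A). Once we know that $\phi$ promotes to an isomorphism of the \emph{measured} Reeb graphs $(\Gamma_F,f,\mu)$ and $(\Gamma_G,f,\mu)$ — that is, a directed graph isomorphism intertwining the two functions $f$ and carrying the first measure to the second — the existence of a symplectomorphism $\Phi$ with $\Phi_*F=G$ is immediate, because Theorem~\ref{thm:sdiff-functions} identifies simple Morse functions on $M$ modulo symplectomorphisms with measured Reeb graphs compatible with $M$ (and both $\Gamma_F,\Gamma_G$ are automatically compatible with $M$, being Reeb graphs of functions on $M$). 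Thus the entire content is to upgrade the moment-preserving abstract isomorphism $\phi$ to an isomorphism of measured Reeb graphs, and the device that does this is the determinacy of the moment problem for compactly supported measures.

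First I would fix an edge $e$ of $\Gamma_F$ with lower endpoint $v$ and upper endpoint $w$, and record that $f$ is strictly monotone along $e$, so that $f|_e$ is a homeomorphism of the open edge onto the open interval $(f(v),f(w))$. Hence the restricted measure $\mu|_e$ is faithfully encoded by its pushforward $\nu_e:=(f|_e)_*\mu$, a finite Borel measure on $\R$ supported in a fixed compact interval. By definition $m_{i,e}(F)=\int_{\R}t^i\,\diff\nu_e(t)$ is the $i$-th moment of $\nu_e$. Since polynomials are dense in the continuous functions on a compact interval by the Weierstrass theorem, a compactly supported measure is uniquely determined by its full moment sequence; therefore the hypothesis that $\phi$ preserves all moments yields $\nu_e=\nu_{\phi(e)}$ for every edge $e$, where $\nu_{\phi(e)}=(f|_{\phi(e)})_*\mu$ is formed on $\Gamma_G$.

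Next I would extract the matching of $f$ at vertices. By Definition~\ref{measureproperty} the density $\diff\mu/\diff f$ is non-vanishing along open edges, with at worst a logarithmic blow-up at $3$-valent endpoints, so each $\nu_e$ has full support on the closed interval $[f(v),f(w)]$. The equality $\nu_e=\nu_{\phi(e)}$ then forces their supports to coincide, and since $\phi$ is a \emph{directed} isomorphism it sends lower endpoints to lower endpoints, giving $f(\phi(v))=f(v)$ at every vertex $v$. I can then define $\phi$ on the interior of each edge as the unique increasing $f$-compatible homeomorphism $(f|_{\phi(e)})^{-1}\circ(f|_e)$; these glue with the matched vertex values into a homeomorphism $\Gamma_F\to\Gamma_G$ with $f\circ\phi=f$ and, directly from $\nu_e=\nu_{\phi(e)}$, with $\phi_*\mu=\mu$ edge by edge. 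As both measures are log-smooth by hypothesis, and $\phi$ preserves the boundary-vertex marking (being an isomorphism of directed graphs in the sense of Definition~\ref{RGDef}), this is exactly an isomorphism of measured Reeb graphs compatible with $M$, and Theorem~\ref{thm:sdiff-functions} now yields the desired $\Phi$ with $\Phi_*F=G$.

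The main obstacle, and the only genuinely analytic step, is the passage from the infinite moment sequence back to the measure on each edge: one must use that these measures have compact support, so that the Hausdorff moment problem is determinate (finitely many moments would not suffice), and one must check that the logarithmic singularities of the density at $3$-valent vertices spoil neither the finiteness of $\nu_e$ nor the identification of its support with the entire closed $f$-interval. The remaining ingredients — the vertex-value matching, the edgewise reconstruction of $\phi$, and the final appeal to Theorem~\ref{thm:sdiff-functions} — are then purely formal.
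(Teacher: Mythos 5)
Your proposal is correct and follows essentially the same route as the paper: push the measure on each edge forward along $f$ to a compactly supported measure on an interval, invoke the determinacy (uniqueness) of the Hausdorff moment problem to conclude the pushforward measures coincide, deduce from the matching supports that the $f$-values at vertices agree, and assemble the edgewise identifications into an isomorphism of measured Reeb graphs, whence Theorem \ref{thm:sdiff-functions} produces the symplectomorphism. The extra details you supply (full support of $\nu_e$ from the non-vanishing density, integrability of the logarithmic singularities, the explicit map $(f|_{\phi(e)})^{-1}\circ(f|_e)$) are exactly what the paper compresses into ``which implies the proposition.''
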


\begin{proof}
Consider an edge $e = [v ,w] \in \Gamma_F$. Pushing forward the measure $\mu$ on $e$ by means of the homeomorphism $f \colon e \to [f(v), f(w)] \subset \R$, we obtain a measure $\mu_f$ on the interval $I_f = [f(v), f(w)]$, whose moments coincide with the moments of $\mu$ at $e$. Repeating the same construction for the measure on $\Gamma_G$, we obtain another measure $\mu_g$, which is defined on the interval $I_g = [g(\phi(v)), g(\phi(w))]$ and has the same moments as $\mu_f$. \par Now, consider any closed interval $I \subset \R$ which contains both $I_f$ and $I_g$. Then the measures $\mu_f$, $\mu_g$ may be viewed as measures on the interval $I$ supported at $I_f$ and $I_g$ respectively. The moments of the measures $\mu_f, \mu_g$ on $I$ coincide, so, by the uniqueness theorem for the Hausdorff moment problem, we have $\mu_f = \mu_g$, which implies the proposition.
\end{proof}

The above theorem allows one to describe Casimirs of the 2D Euler equation on $M$, i.e. invariants of the coadjoint action 
of the symplectomorphism group $\SDiff(M)$. 
Let $F$ be a Morse vorticity function of an ideal flow with velocity $v$ on a surface $M$, possibly with boundary, and let
$\Gamma$ be its Reeb graph. The corresponding moments  $m_{i,e}(F) $ for this vorticity are natural to call 
{\it generalized enstrophies}. Then group coadjoint orbits in the vicinity of an
orbit with the vorticity function $F$ are singled out as follows.

\begin{corollary}\label{cor:Casimirs}
A  complete set of Casimirs of a 2D Euler equation in a neighborhood of a Morse-type coadjoint orbit is  given 
by the moments $m_{i,e}$ for each edge $e \in \Gamma$,  $i=0,1,2,\dots$, and all circulations of the velocity $v$
over cycles in the singular levels of $F$ on $M$.
\end{corollary}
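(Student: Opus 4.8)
The plan is to deduce this statement from two results already in hand: the moment completeness theorem stated immediately above, which recovers the \emph{measured} Reeb graph of the vorticity from its edge moments, and Theorem~\ref{thm4}, which asserts that a Morse-type coadjoint orbit is completely determined by its circulation graph $(\Gamma, f, \mu, \circulation)$. Accordingly, I would split the argument into two tasks: first, verify that the listed quantities are genuine Casimirs, i.e. invariant under the coadjoint $\SDiff(M)$-action; and second, show that they are complete, i.e. that two Morse-type cosets lying in a fixed neighborhood and sharing all these values must lie on the same orbit.

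For invariance I would argue as follows. Throughout a neighborhood of a given Morse-type orbit the combinatorial type of the Reeb graph is constant, so the edge set $E(\Gamma)$ is canonically defined, and the equivariance of $\mathfrak{curl}$ identifies the edges of conjugate vorticities (possibly via the induced graph automorphism). Since the coadjoint action is realized by pullback along a symplectomorphism $\Phi$, which preserves $\omega$ and carries $M_e$ to $M_{\Phi(e)}$, a change of variables gives $m_{i,e}(\Phi^*F) = m_{i,\Phi(e)}(F)$, so each moment $m_{i,e}(F) = \int_{M_e} F^i\,\omega$ is preserved up to the relabeling of edges. Likewise, the circulation $\int_{\ell}\alpha$ of $\alpha$ over a level-set loop $\ell$ is independent of the representative of the coset and transforms correctly under pullback, exactly as noted after formula~\eqref{cosetCirculation}; in particular the singular-level circulations $c_i(v)$ of~\eqref{limitCirculation} are coadjoint invariants.

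For completeness, suppose two Morse-type cosets $[\alpha]$ and $[\beta]$ in the neighborhood have equal values of all the listed Casimirs. Since the combinatorial type is fixed, the identity is an isomorphism of abstract directed graphs $\Gamma_F \to \Gamma_G$ preserving all moments, so the moment completeness theorem above produces a symplectomorphism $\Phi$ with $\Phi_* F = G$. Replacing $[\beta]$ by $\Phi^*[\beta]$, I may assume both cosets have the \emph{same} vorticity $F$, hence the same measured Reeb graph and the same density $\rho(I) := \int_I f\,\diff\mu$ on $\Gamma$. Their circulation functions $\circulation_\alpha$ and $\circulation_\beta$ are then both antiderivatives of $\rho$ in the sense of Definition~\ref{circProperties}. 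The Kirchhoff rule~\eqref{3valentcirc} at a non-boundary $1$-valent vertex forces the limit of any circulation function to vanish there, so both functions vanish at maxima and minima; and by hypothesis they agree at every $3$-valent vertex, since those limits are precisely the circulations $c_i(v)$ over the loops of the figure-eight level. Thus $\circulation_\alpha$ and $\circulation_\beta$ have equal limits at both endpoints of every edge, and since the Newton--Leibniz relation~\eqref{stokes} fixes each function on an edge up to its value at one endpoint, we conclude $\circulation_\alpha \equiv \circulation_\beta$. The circulation graphs therefore coincide, and Theorem~\ref{thm4} places $[\alpha]$ and $[\beta]$ on a common orbit.

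I expect the main obstacle to be the bookkeeping concealed in the word ``neighborhood'': globally the edges and cycles of combinatorially distinct graphs cannot be matched, so the moments and circulations are only well-defined as Casimirs after restricting to a stratum of constant combinatorial type, and the matching $\phi$ in the moment theorem must then be taken to be the identity (or a graph automorphism). The second point requiring care is the claim that the singular-level circulations pin down the entire circulation function; this rests on the observation that, given $\rho$, an antiderivative is determined edge by edge by its endpoint limits, which are the $c_i(v)$ at $3$-valent vertices and zero at the extrema. In the case with boundary the limits at boundary vertices are not forced to vanish, so there one must additionally include the circulations over the boundary cycles in order to span $\Hom_1(\Gamma, \partial\Gamma, \R)$ (cf. Proposition~\ref{circFuncs}) and retain completeness.
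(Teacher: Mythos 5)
Your proposal is correct and follows essentially the route the paper intends for this corollary: the moment completeness theorem of Appendix B recovers the measured Reeb graph, the singular-level circulations then pin down the antiderivative edge by edge (its limits vanish at interior extrema by the Kirchhoff rule), and Theorem \ref{thm4} identifies the orbit from the resulting circulation graph. Your two caveats --- that ``neighborhood'' means restricting to the stratum of fixed combinatorial type so that edges and cycles are canonically matched, and that for $\partial M \neq \emptyset$ one must count the circulations over boundary circles (i.e., read the fibers over boundary vertices as singular levels) to span $\Hom_1(\Gamma,\partial\Gamma,\R)$ --- are precisely the points the paper leaves implicit, and the latter matches the paper's remark that the required circulations are just those needed to determine the circulation function as in Section \ref{sect:coadj-sdiff}.
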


Note that the (finite) set of required circulations can be sharpened by considering fewer quantities 
needed to describe the circulation function, as in Section \ref{sect:coadj-sdiff}.


\begin{remark}
As invariants of the coadjoint action of $\SDiff(M)$, one usually considers total moments
$$
m_i(F)= \int_{M}\!\! F^i \,\omega\ = \int_{\Gamma} f^i \,\diff \mu \, ,
$$
where $F = \Diff[\alpha]$ is the vorticity function, and $(\Gamma,f,\mu)$ is the measured Reeb graph of $F$. However, the latter moments do not form a complete set of invariants even in the case of a sphere or a disk.

 \begin{figure}[t]
 \centering
\begin{tikzpicture}[thick, scale = 1.5]
\node [vertex] (A) at (4.9,0.7) {};
\node[vertex] (B) at (5.5,-0.3) {};
\node[] () at (4.5,0.6) {$\Gamma$};
\node[vertex] (C) at (6,1) {};
\node[vertex] (D) at (5.5,-1) {};
    \fill (A) circle [radius=1.5pt];
    \fill (B) circle [radius=1.5pt];
        \fill (C) circle [radius=1.5pt];
            \fill (D) circle [radius=1.5pt];
    \draw [a] (B) -- (A);
        \draw [a] (B) -- (C);
            \draw [a] (D) -- (B);
            \draw [->] (8,-1.2) -- (8,1.5);
            \draw [dotted] (5.32, 0) -- (8,0);
                  \draw [dotted] (5.1, 0.4) -- (8,0.4);
                   \fill (8,0) circle [radius=1pt];
                   \draw (5.27,0) -- (5.37, 0);
                      \draw (5.57,0) -- (5.67, 0);
                      \draw (5.03,0.4) -- (5.13, 0.4);
                          \draw (5.73,0.4) -- (5.83, 0.4);
                     \fill (8,0.4) circle [radius=1pt];
                 \node () at (8.15, 1.3) {$f$};
                 \node () at (8.12, 0) {$a$};
                       \node () at (8.12, 0.4) {$b$};
                         \node () at (5, 0.2) {$I_1$};
                          \node () at (5.9, 0.2) {$I_2$};
\end{tikzpicture}
\caption{Modifying the measure on the edges.}\label{modMeasure}
\end{figure}
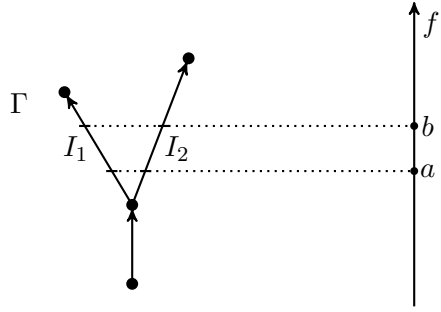
Consider, for example, the measured Reeb graph $(\Gamma, f ,\mu)$ depicted in Figure \ref{modMeasure}. Let $\mu'$ be any smooth measure on $\R$ supported in $[a,b]$. Define a new measure $\tilde \mu$ on $\Gamma$ by ``moving some density from one branch to another", i.e. by setting
\begin{align*}
\tilde \mu := \begin{cases} \mu + f^*(\mu') \mbox{ in } I_1,\\
 \mu - f^*(\mu') \mbox{ in } I_2,
\end{cases}
\end{align*}
and $\tilde \mu := \mu$ elsewhere. Then $(\Gamma, f ,\tilde \mu)$ is a again a measured Reeb graph. Moreover, for all total moments we have
$$
\int_\Gamma f^k\,\diff \mu = \int_\Gamma f^k\,\diff \tilde \mu\,.
$$
However, the graphs $(\Gamma, f ,\mu)$ and $(\Gamma, f ,\tilde \mu)$ are not isomorphic and thus correspond to two different coadjoint orbits of $\SDiff(S^2)$.
\end{remark}

\medskip 
\bibliographystyle{plain}
\bibliography{sympl_orbits}

\end{document}